\theoremstyle{plain}
\newtheorem{theorem}{Theorem}[section]
\newtheorem{lemma}[theorem]{Lemma}
\newtheorem{proposition}[theorem]{Proposition}
\newtheorem{corollary}[theorem]{Corollary}
\theoremstyle{definition}
\newtheorem{remark}[theorem]{Remark}
\newcommand{\MM}{\mathcal M}
\newcommand{\calA}{\mathcal A}
\newcommand{\calB}{\mathcal B}
\newcommand{\calC}{\mathcal C}
\newcommand{\calD}{\mathcal D}
\newcommand{\BM}{\overline{\mathcal M}}
\newcommand{\PP}{\mathbb P}
\newcommand{\OO}{\mathcal O}
\newcommand{\RR}{\mathbb R}
\newcommand{\HD}{\widehat{\Delta}}
\newcommand{\TD}{\widetilde{\Delta}}
\newcommand{\TMM}{\widetilde{\mathcal M}}
\newcommand{\Eff}{\operatorname{Eff}}
\newcommand{\BEff}{\overline{\operatorname{Eff}}}
\newcommand{\ps}{\operatorname{ps}}
\newcommand{\bbC}{\mathbb C}
\newcommand{\bbQ}{\mathbb Q}
\newcommand{\bbR}{\mathbb R}
\newcommand{\bbP}{\mathbb P}
\newcommand{\Int}{\operatorname{Int}}
\newcommand{\sat}{\operatorname{sat}}
\title{Extremal higher codimension cycles on moduli spaces of curves}
\date{\today}
\author{Dawei Chen}
\address{Department of Mathematics, Boston College, Chestnut Hill, MA 02467}
\email{dawei.chen@bc.edu}
\author{Izzet Coskun}
\address{Department of Mathematics, Statistics, and Computer Science, University of Illinois at Chicago, Chicago, IL 60607}
\email{coskun@math.uic.edu}
\thanks{During the preparation of this article the first author was partially supported by the NSF grant DMS-1200329, the NSF CAREER grant
DMS-1350396, and the second author was partially supported by the NSF CAREER grant DMS-0950951535.}
\begin{document}

\begin{abstract}
We show that certain geometrically defined higher codimension cycles are extremal in the effective cone of the moduli space $\BM_{g,n}$ of stable genus $g$ curves with $n$ ordered marked points. In particular, we prove that codimension two boundary strata are extremal and exhibit extremal boundary strata of  higher codimension. We also show that the locus of hyperelliptic curves with a marked Weierstrass point in $\BM_{3,1}$ and the locus of hyperelliptic curves in $\BM_4$ are extremal cycles. In addition, we exhibit infinitely many extremal codimension two cycles in $\BM_{1,n}$ for $n\geq 5$ and in $\BM_{2,n}$ for $n\geq 2$. 
\end{abstract}

\maketitle
\setcounter{tocdepth}{1}
\tableofcontents

\section{Introduction}
\label{sec:intro}

Let $\BM_{g,n}$ denote the Deligne-Mumford-Knudsen moduli space of stable genus $g$ curves with $n$ ordered marked points. In this paper, we study the effective cones of higher codimension cycles on $\BM_{g,n}$. 

Motivated by the problem of determining the Kodaira dimension of $\BM_{g,n}$, the cone of effective divisors of $\BM_{g,n}$ has been studied extensively, see e.g.~\cite{HarrisMumfordKodaira, HarrisKodaira, EisenbudHarrisKodaira, FarkasKoszul, LoganKodaira, Vermeire, CastravetTevelev, ChenCoskun}. In contrast, little is known about higher codimension cycles on $\BM_{g,n}$, in part because their positivity properties 
are not as well-behaved. For instance, higher codimension nef cycles may fail to be pseudoeffective \cite{DELV}. Furthermore, unlike the case of divisors, we lack simple numerical, cohomological and  geometric conditions for determining whether higher codimension cycles are nef or pseudoeffective. 

In this paper, we show that certain geometrically defined higher codimension cycles span extremal rays of the effective cone of $\BM_{g,n}$. 

\subsection*{Main Results} \begin{enumerate}[label={\upshape(\roman*)}]

\item Every codimension two boundary stratum of $\BM_g$ and of $\BM_{0,n}$ is extremal (Theorems~\ref{thm:eff3}, \ref{thm:eff2mg} and \ref{thm:eff0n}). 

\item The higher codimension  boundary strata associated to certain dual graphs described in \S \ref{sec:genus-g} are extremal in $\BM_g$  (Theorem \ref{thm:arbitrarily-high}). 

\item Every codimension $k$ boundary stratum of $\BM_{0,n}$ parameterizing curves with $k$ marked tails attached to an unmarked $\PP^1$ is extremal (Theorem~\ref{thm:unmarked}). 

\item There exist infinitely many extremal effective codimension two cycles in $\BM_{1,n}$ for every $n\geq 5$ (Theorem~\ref{thm:eff1n}) and in $\BM_{2,n}$ for every $n\geq 2$ (Theorem~\ref{thm:eff2n}).

\item The locus of hyperelliptic curves with a marked Weierstrass point is a non-boundary extremal codimension two cycle in $\BM_{3,1}$ (Theorem~\ref{thm:hw}).  

\item The locus of hyperelliptic curves is a non-boundary extremal codimension two cycle in $\BM_4$ (Theorem~\ref{thm:hyp-4}). 
\end{enumerate}

These results illustrate that the effective cone of higher codimension cycles on $\BM_{g,n}$ can be very complicated even for small values of $g$ and $n$. 

In order to verify the extremality of a codimension $k$ cycle, we use two criteria. First,  we find a criterion that shows the extremality of loci that drop the largest possible dimension under a morphism (Proposition~\ref{prop:extremal}). We then apply the criterion to morphisms from $\BM_{g,n}$ to different modular compactifications of $\MM_{g,n}$. Second, we use induction on dimension. To prove that a cycle $Z$ is extremal, we first show that  $Z$ is extremal in a divisor $D\subset \BM_{g,n}$ containing $Z$, and then show that effective cycles representing $Z$ must be contained in $D$ (Proposition~\ref{prop:subvariety}). 

The paper is organized as follows. In Sections~\ref{sec:prelim} and~\ref{sec:prelim-moduli} we review the basic properties of effective cycles and moduli spaces of curves. Then we carry out the study of the effective cones of $\BM_{g,n}$ according to the values of $g$ and $n$: $g=3$ and $n\leq 1$ (Section~\ref{sec:genus-3});  
$g\geq 4$ and $n=0$ (Section~\ref{sec:genus-g}); $g=0$ and arbitrary $n$ (Section~\ref{sec:eff0n}); $g=1$ and $n\geq 5$ (Section~\ref{sec:eff1n}); 
$g=2$ and $n\geq 2$ (Section~\ref{sec:eff2n}). 

\subsection*{Acknowledgments} We would like to thank Maksym Fedorchuk, Joe Harris, Brian Lehmann, John Lesieutre, Anand Patel, Luca Schaffler  and Nicola Tarasca for helpful discussions related to this paper.  

\section{Preliminaries on effective cycles}
\label{sec:prelim}

In this section, we review basic properties of the effective cone of cycles on an algebraic variety and develop some criteria for proving the extremality of an effective cycle. Throughout the paper, all varieties are defined over $\bbC$ and all linear combinations of cycles are with $\bbR$-coefficients. 

Let $X$ be a complete variety. A \emph{cycle} on $X$ is a formal sum of subvarieties of $X$.  A cycle is \emph{$k$-dimensional} if all subvarieties in the sum are $k$-dimensional. A cycle is \emph{effective} if all coefficients in the sum 
are nonnegative. Two $k$-dimensional cycles $A$ and $B$ on $X$ are  {\em numerically equivalent}, if $A \cap P = B \cap P$ under the degree map for 
all weight $k$ polynomials $P$ in Chern classes of vector bundles on $X$, where $\cap$ is the cap product, see \cite[Chapter 19]{FultonIntersection}. When $X$ is nonsingular, this is equivalent to requiring $A\cdot C = B\cdot C$ for all subvarieties $C$ of codimension $k$, where $\cdot$ is the intersection product. The focus of the paper is the moduli space of curves, which is $\bbQ$-factorial but may have finite quotient singularities. Nevertheless, the intersection product is still compatible with the cup product, see \cite[Section 1]{EdidinChow}. 
  
Denote by $[Z]$ the \emph{numerical class} of a cycle $Z$. Let $N_k(X)$ (resp. $N^k(X)$) denote the $\RR$-vector space of cycles of 
dimension $k$ (resp. codimension $k$) modulo numerical equivalence. It is a finite dimensional vector space. Let $\Eff_k(X) \subset N_k(X)$ (resp. $\Eff^k(X) \subset N^k(X)$) denote the {\em effective cone} of dimension $k$ (resp. codimension $k$) cycles generated by all effective cycle classes. Their closures $\BEff_k(X)$ and $\BEff^k(X)$ are called the {\em pseudoeffective cones}.

A convex cone is most conveniently described by specifying its extremal rays. Recall that a ray $R$ is called {\em extremal}, if for every $D \in R$ and $D= D_1 + D_2$ with $D_1, D_2$ in the cone, we have $D_1, D_2 \in R$. 
If an extremal ray is spanned by the class of an effective cycle $D$, we say that $D$ is an \emph{extremal effective cycle}. 

There is a well-developed theory to study the cone $\Eff^1(X)$, including numerical, cohomological, analytic and geometric conditions for checking whether a divisor is in $\BEff^1(X)$, see \cite{LazarsfeldPositivity}. In contrast, $\Eff^k(X)$ for $k\geq 2$ is not well-understood. We first describe two simple criteria for checking extremality of cycles in $\Eff^k(X)$. 

Let $f: X\to Y$ be a morphism between two complete varieties. To a subvariety $Z\subset X$ of dimension $k$ we associate an index 
$$e_f(Z) = \dim Z - \dim f(Z). $$
Note that $e_f(Z) > 0$ if and only if $Z$ drops dimension under $f$.   

\begin{proposition}
\label{prop:main-tool}
Let $f: X\to Y$ be a morphism between two projective varieties and let $k > m \geq 0$ be two integers. Let $Z$ be a $k$-dimensional subvariety of $X$ such that $e_f(Z) \geq k-m$. If $[Z] = a_1[Z_1] + \cdots + a_r[Z_r] \in N_k(X)$, where $Z_i$ is a $k$-dimensional subvariety of $X$ and $a_i > 0$ for all $i$, then $e_f(Z_i) \geq k-m$ for every $1 \leq i \leq r$. 
\end{proposition}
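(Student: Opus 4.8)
The plan is to exploit the key geometric fact that the dimension of the image of a subvariety cannot exceed the dimension of the image of the ambient locus it sits inside. The hypothesis $e_f(Z)\geq k-m$ says that $\dim f(Z)\leq m$, and we want to deduce $\dim f(Z_i)\leq m$ for each component $Z_i$ appearing with positive coefficient. First I would reduce to understanding the relationship between the $Z_i$ and the fibers of $f$. The quantity $\dim f(Z)\leq m$ means $Z$ is contained in the preimage $f^{-1}(W)$ of a subvariety $W=f(Z)\subseteq Y$ of dimension at most $m$.

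The main idea is an intersection-theoretic argument using a general complete intersection from the morphism $f$. Let $H$ be a very ample divisor class on $Y$, and consider the pullback classes $f^*H$ on $X$. For a subvariety $V$ of dimension $k$, the number $(f^*H)^{m+1}\cdot [V]$ computes the degree of $f(V)$ with respect to $H$ times the generic fiber contribution, suitably interpreted; crucially, $(f^*H)^{m+1}\cdot[V]=0$ precisely when $\dim f(V)\leq m$, since a general member of $|f^*H|^{m+1}$ is the preimage of a general codimension-$(m+1)$ linear section of $Y$, which misses $f(V)$ exactly when $\dim f(V)\leq m$. Thus the condition $e_f(Z)\geq k-m$ is equivalent to the vanishing $(f^*H)^{m+1}\cdot[Z]=0$.

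Now I would use that the class $(f^*H)^{m+1}$ is a nef class (a nonnegative combination of intersections of nef divisors), so that its intersection with any effective $k$-dimensional cycle is nonnegative, and is positive on any component whose image has dimension exceeding $m$. Applying this to the relation $[Z]=a_1[Z_1]+\cdots+a_r[Z_r]$ and intersecting both sides with $(f^*H)^{m+1}$ gives
\[
0=(f^*H)^{m+1}\cdot[Z]=\sum_{i=1}^r a_i\,(f^*H)^{m+1}\cdot[Z_i].
\]
Since every term on the right is nonnegative and every $a_i>0$, each term must vanish, i.e.\ $(f^*H)^{m+1}\cdot[Z_i]=0$, which by the equivalence above forces $\dim f(Z_i)\leq m$, that is $e_f(Z_i)\geq k-m$ for all $i$.

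The step I expect to be the main obstacle is making rigorous the equivalence between the vanishing $(f^*H)^{m+1}\cdot[V]=0$ and the dimension bound $\dim f(V)\leq m$, together with the positivity claim when $\dim f(V)>m$. One must verify that a general codimension-$(m+1)$ linear section of $Y$ pulls back to an effective cycle whose class is $(f^*H)^{m+1}$ and whose intersection with $V$ genuinely records $\dim f(V)$; this requires care about the dimension of $f(V)$ versus excess intersection and about working on the possibly singular space $X$ (here invoking the compatibility of the intersection product with the cup product noted in the preliminaries, and the projection formula $(f^*H)^{m+1}\cdot[V]=H^{m+1}\cdot f_*[V]$). Once the projection formula is in place, the argument becomes transparent: $f_*[V]=(\deg f|_V)[f(V)]$ when $\dim f(V)=\dim V$ and $f_*[V]=0$ otherwise, so $H^{m+1}\cdot f_*[V]>0$ exactly when $\dim f(V)=k>m$ and $=0$ when $\dim f(V)\leq m$, completing the proof.
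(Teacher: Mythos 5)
Your strategy is the same as the paper's: intersect the relation with the $(m+1)$-st power of the pullback of a very ample class from $Y$, use that such intersections of effective cycles are zero-or-effective, deduce termwise vanishing, and translate that vanishing back into a bound on $\dim f(Z_i)$. The geometric picture in your second paragraph --- the preimage of a general codimension-$(m+1)$ complete intersection in $Y$ misses $V$ exactly when $\dim f(V)\leq m$ --- is also the paper's. The genuine gap is in the step you yourself single out as the main obstacle: your proposed resolution of it fails. You justify the equivalence ``$(f^*H)^{m+1}\cdot[V]=0$ iff $\dim f(V)\leq m$'' by the projection formula applied to $[V]$ itself, using that $f_*[V]=(\deg f|_V)[f(V)]$ if $\dim f(V)=\dim V$ and $f_*[V]=0$ otherwise. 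But $f_*[V]=0$ already whenever $\dim f(V)<k$, so $H^{m+1}\cdot f_*[V]=0$ for every $V$ with $m<\dim f(V)<k$ as well. Your test therefore only distinguishes ``$f|_V$ generically finite'' from ``$f|_V$ drops dimension'': it proves $e_f(Z_i)\geq 1$, not $e_f(Z_i)\geq k-m$. When $k-m\geq 2$ --- exactly the situation in the paper's applications, e.g. the indices $e_\tau\geq 3$ used in Theorem~\ref{thm:eff2mg} --- the conclusion does not follow. The underlying point is that pushing forward loses information: vanishing of $H^{m+1}\cdot f_*[Z_i]$ downstairs neither implies vanishing of $(f^*H)^{m+1}\cdot[Z_i]$ upstairs nor bounds $\dim f(Z_i)$ by $m$.

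The paper's proof avoids this (and simultaneously repairs a second imprecision in your write-up: for $k>m+1$ the class $(f^*H)^{m+1}\cdot[Z_i]$ has positive dimension $k-m-1$, so calling it a ``nonnegative number'' is not meaningful) by cutting with a very ample class $A$ on $X$ first. A general representative $W_i$ of $A^{k-m-1}\cdot[Z_i]$ is an effective cycle of dimension $m+1$ supported on $Z_i$, and the projection formula is applied to $W_i$ rather than to $Z_i$: one gets $A^{k-m-1}\cdot(f^*H)^{m+1}\cdot[Z_i]=H^{m+1}\cdot f_*[W_i]$, now an honest number. For general linear sections one has $\dim f(W_i)=\min(\dim f(Z_i),\,m+1)$; hence $f_*[W_i]=0$ when $\dim f(Z_i)\leq m$ (every component of $W_i$ maps into $f(Z_i)$ and so drops dimension), while $f_*[W_i]$ is a nonzero effective $(m+1)$-cycle, of positive $H^{m+1}$-degree, when $\dim f(Z_i)\geq m+1$. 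This is the statement that actually detects intermediate image dimensions; with it, the vanishing $(f^*H)^{m+1}\cdot[Z]=0$, the termwise vanishing for the $Z_i$, and the final bound $\dim f(Z_i)\leq m$ all go through. Alternatively, your Bertini-style sketch can be made rigorous directly (general members of the base-point-free system $f^*|H|$ cut $Z_i$ properly, and the resulting effective cycle is nonzero iff $f(Z_i)$ meets a general codimension-$(m+1)$ complete intersection, i.e. iff $\dim f(Z_i)\geq m+1$), but some such argument must replace the pushforward computation you gave.
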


\begin{proof}
Let $A$ and $B$ be two very ample divisor classes on $X$ and on $Y$, respectively. Then $N = f^{*}B$ is base-point-free on $X$. In particular, if $U\subset X$ is an effective cycle of dimension $k$, then the intersection  $N^{j} \cdot [U]$ is either zero or can be represented by an effective cycle of dimension $k - j$ for $1\leq j \leq k$. In the latter case, by the projection formula and the very ampleness of $A$, we have $0< A^{k-j} \cdot N^j \cdot [U] = B^j \cdot [f_*(W)]$, where $W$ is an effective cycle representing $A^{k-j} \cdot [U]$. Since $e_f(Z) \geq k-m$, we conclude that $N^{m+1} \cdot [Z] =0$. Therefore, $N^{m+1} \cdot [Z_i] = 0$ for $1 \leq i \leq r$. Otherwise, $N^{m+1} \cdot [Z_i]$ can be represented by an effective cycle. Intersecting both sides of the equality $N^{m+1} \cdot [Z] = N^{m+1} \cdot (\sum_{i=1}^r a_i[Z_i])$ with $A^{k-m-1}$ we would obtain a contradiction. By the projection formula, we conclude that $B^{m+1} \cdot (f_*[Z_i]) = 0$, hence  $\dim f(Z_i) \leq m$ as desired. 
\end{proof}

As a corollary of Proposition \ref{prop:main-tool}, we obtain the following extremality criterion.

\begin{proposition}
\label{prop:extremal}
Let $f: X\to Y$ be a morphism between two projective varieties. Fix two integers $k > m \geq 0$.  
Among all $k$-dimensional subvarieties $Z$ of $X$, assume that only finitely many of them, denoted by $Z_1, \ldots, Z_n$, 
satisfy  $e_f(Z) \geq k-m$. If the classes of $Z_1,\ldots, Z_n$ 
are linearly independent, then each $Z_i$ is an extremal effective cycle in $\Eff_k(X)$. 
\end{proposition}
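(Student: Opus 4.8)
The plan is to deduce Proposition~\ref{prop:extremal} directly from Proposition~\ref{prop:main-tool} applied to the class of each $Z_i$. Suppose for contradiction that some $Z_i$, say $Z_1$, is \emph{not} extremal in $\Eff_k(X)$. Then $[Z_1]$ admits a decomposition $[Z_1] = [A] + [B]$ with $[A], [B] \in \Eff_k(X)$ and neither $[A]$ nor $[B]$ lying on the ray spanned by $[Z_1]$. Writing $[A]$ and $[B]$ as nonnegative combinations of classes of $k$-dimensional subvarieties, I obtain an expression $[Z_1] = \sum_j c_j [W_j]$ with $c_j > 0$ and the $W_j$ being $k$-dimensional subvarieties of $X$, where the collection $\{W_j\}$ is not simply a rescaling of $Z_1$ alone.

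By Proposition~\ref{prop:main-tool} applied with $Z = Z_1$ (which satisfies $e_f(Z_1) \geq k-m$ by hypothesis), every subvariety $W_j$ appearing with positive coefficient also satisfies $e_f(W_j) \geq k-m$. By the finiteness assumption, each such $W_j$ must therefore equal one of $Z_1, \ldots, Z_n$. Hence the relation $[Z_1] = \sum_j c_j [W_j]$ is in fact a relation of the form $[Z_1] = \sum_{i=1}^n b_i [Z_i]$ with all $b_i \geq 0$. Since $[Z_1], \ldots, [Z_n]$ are linearly independent by hypothesis, this forces $b_1 = 1$ and $b_i = 0$ for $i \neq 1$, so every $W_j$ with positive coefficient equals $Z_1$. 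This means the decomposition is trivial: both $[A]$ and $[B]$ are nonnegative multiples of $[Z_1]$, contradicting the assumption that $Z_1$ is not extremal. Repeating the argument for each index gives extremality of every $Z_i$.

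The one point requiring care is the passage from an abstract decomposition $[Z_1] = [A] + [B]$ in the cone to an \emph{effective cycle} decomposition $\sum_j c_j [W_j]$ to which Proposition~\ref{prop:main-tool} applies: I need $A$ and $B$ to be represented by honest effective $k$-cycles, i.e.\ nonnegative combinations of $k$-dimensional subvarieties, which is exactly what membership in $\Eff_k(X)$ provides by definition. The main conceptual step, however, is recognizing that the finiteness of the high-index loci together with linear independence is what collapses any nontrivial decomposition; Proposition~\ref{prop:main-tool} does the real work of constraining which subvarieties can appear, and linear independence rules out hidden relations among the $[Z_i]$. I expect no serious obstacle beyond bookkeeping, since the structural input is already isolated in Proposition~\ref{prop:main-tool}.
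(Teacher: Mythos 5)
Your proof is correct and takes essentially the same route as the paper's: both apply Proposition~\ref{prop:main-tool} to an effective decomposition of $[Z_i]$, use the finiteness hypothesis to identify every constituent subvariety with one of $Z_1,\ldots,Z_n$, and then invoke linear independence to force all terms to be proportional to $[Z_i]$. The only differences are cosmetic — you argue by contradiction and make explicit the passage from a decomposition in the cone $\Eff_k(X)$ to a decomposition into honest effective cycles, a step the paper leaves implicit.
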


\begin{proof}
Suppose that $[Z_i] = a_1[D_1] + \cdots + a_r[D_r]$ with $D_j$ a $k$-dimensional subvariety of $X$ and $a_j >0$ for all $j$. Then, by Proposition \ref{prop:main-tool}, $e_f(D_j)\geq k-m$. Since $Z_1, \dots, Z_n$ are the only $k$-dimensional 
 subvarieties of $X$ with index $e_f \geq k-m$, we conclude that $D_j$ has to be one of $Z_1, \dots, Z_n$.  Since their classes are independent, we conclude that $[D_j]$ is proportional to $[Z_i]$ for all $j$. Therefore, $Z_i$ is extremal in $\Eff_k(X)$. 
\end{proof}

We will apply Propositions \ref{prop:main-tool} and \ref{prop:extremal} to morphisms from $\BM_{g,n}$ to alternate modular compactifications of $\MM_{g,n}$. 
We also remark that the classes of exceptional loci in higher codimension may fail to be linearly independent. 

\begin{remark}
The following example, which is related to Hironaka's example of a complete but non-projective variety  (see \cite[Appendix B, Example 3.4.1]{Hartshorne}),
was pointed out to the authors by John Lesieutre. Let $X$ be a smooth threefold, and let $C, D\subset X$ be two smooth curves meeting transversally at two points $p$ and $q$. Let $Y$ be the blowup of $X$ along $C$. The fibers $F_r$ of the exceptional locus over $C$
have the same numerical class for all $r\in C$. Next, let $Z$ be the blowup of $Y$ along the proper transform of $D$.  The proper transforms $E_p$ of $F_p$ and $E_q$ of $F_q$ have classes different from (the transform of) $F_r$ for $r\neq p, q$. However, $E_p$ and $E_q$ have the same class, and their normal bundles are isomorphic to $\OO(-1)\oplus \OO(-1)$. Hence, they are flopping curves if we blow up $D$ first and then blow up the proper transform of $C$. In particular, there exists a small contraction $Z\to W$ such that the exceptional locus consists of $E_p$ and $E_q$ only. 
\end{remark}

The next corollary will be useful when studying the effective cones of the boundary strata.

\begin{corollary}\label{cor:prod}
Let $X$ and $Y$ be projective varieties and let $Z$ be an extremal effective cycle of codimension $k$ in $X$. Then $Z \times Y$ is an extremal effective cycle of codimension $k$ in $X \times Y$.
\end{corollary}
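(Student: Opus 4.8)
The plan is to deduce the extremality of $[Z\times Y]$ from that of $[Z]$ by exploiting the first projection $p\colon X\times Y\to X$ together with Proposition~\ref{prop:main-tool}. I may assume $\dim Y\geq 1$, since otherwise $X\times Y=X$ and there is nothing to prove. To show that the ray spanned by $[Z\times Y]$ is extremal, it suffices to take an arbitrary expression $[Z\times Y]=\sum_{i=1}^r a_i[W_i]$, with each $W_i$ an irreducible codimension $k$ subvariety of $X\times Y$ and each $a_i>0$, and to prove that every $[W_i]$ is proportional to $[Z\times Y]$; this is enough because in any decomposition $[Z\times Y]=D_1+D_2$ into effective classes, each of $D_1,D_2$ is a nonnegative combination of such subvariety classes.

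First I would set $d_X=\dim X$ and $d_Y=\dim Y$, so that $Z\times Y$ has dimension $D:=d_X-k+d_Y$ in $X\times Y$. I compute $e_p(Z\times Y)=D-\dim p(Z\times Y)=D-\dim Z=d_Y$, which is the largest possible value of $e_p$ on any $D$-dimensional subvariety, since the fibers of $p$ have dimension $d_Y$. Applying Proposition~\ref{prop:main-tool} to the morphism $p$ with the integers $D>m:=D-d_Y=d_X-k\geq 0$, I conclude that $e_p(W_i)\geq d_Y$, and hence $e_p(W_i)=d_Y$, for every $i$.

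The geometric heart of the argument is then to identify the codimension $k$ subvarieties achieving this maximal dimension drop. I claim that each such $W_i$ is a product $V_i\times Y$ for some codimension $k$ subvariety $V_i\subseteq X$: since $\dim p(W_i)=D-d_Y=d_X-k$, the general fiber of $p|_{W_i}$ has dimension $d_Y$, and being a $d_Y$-dimensional closed subset of the irreducible $d_Y$-dimensional fiber $\{x\}\times Y$, it must equal $\{x\}\times Y$. Thus $W_i\supseteq\{x\}\times Y$ for general $x\in p(W_i)$, forcing $W_i=\overline{p(W_i)}\times Y=V_i\times Y$ with $V_i:=p(W_i)$, which is closed since $p$ is proper. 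This rigidity is where I expect the only genuine subtlety, and it is the step that uses the irreducibility of $Y$.

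Finally, I would transfer proportionality back along $p$. The identity $[V\times Y]=p^*[V]$ for flat pullback, combined with the projection formula, shows that $p^*\colon N^k(X)\to N^k(X\times Y)$ is well defined on numerical classes and injective, since pairing $p^*[V]$ with the classes $[C\times\{y\}]$ recovers $[V]\cdot[C]$. Hence from $p^*[Z]=[Z\times Y]=\sum a_i[V_i\times Y]=p^*\bigl(\sum a_i[V_i]\bigr)$ I obtain $[Z]=\sum a_i[V_i]$ in $N^k(X)$, an effective decomposition. Extremality of $Z$ gives $[V_i]=\lambda_i[Z]$ with $\lambda_i>0$, and applying $p^*$ yields $[W_i]=[V_i\times Y]=\lambda_i[Z\times Y]$, as desired. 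The main obstacle is thus concentrated in the dimension-drop rigidity of the previous paragraph; once Proposition~\ref{prop:main-tool} pins every component to maximal drop, the product structure and the injectivity of $p^*$ finish the proof with only routine intersection-theoretic bookkeeping.
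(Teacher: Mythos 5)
Your proposal is correct and follows essentially the same route as the paper: apply Proposition~\ref{prop:main-tool} to the projection $X\times Y\to X$ to force the maximal dimension drop $e_p(W_i)=\dim Y$, deduce that each $W_i$ is a product $V_i\times Y$, push the relation down to $X$ and use extremality of $Z$ there, then pull proportionality back up. The only difference is one of exposition: you spell out the fiber-dimension rigidity argument showing $W_i=V_i\times Y$ and phrase the descent step as injectivity of $p^*$ on numerical classes, both of which the paper asserts in one line via the projection formula.
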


\begin{proof}
Suppose $[Z \times Y] = \sum a_i [U_i]$, where $a_i >0$ and $U_i$ are irreducible subvarieties of $X\times Y$. Let $\pi$ denote the projection of $X\times Y$ onto $X$. Then, by Proposition \ref{prop:main-tool}, $e_{\pi}(U_i) = \dim (Y)$. Hence, $U_i = V_i \times Y$ for a subvariety $V_i \subset X$ for each $i$. By the projection formula, $[Z] = \sum a_i [V_i]$. Since $Z$ is extremal in $X$, the classes of $Z$ and $V_i$ are proportional. Hence, their pullbacks to $X \times Y$ are proportional. We conclude that $[U_i]$ is proportional to $[Z \times Y]$ for every $i$ and $Z \times Y$ is extremal in $X \times Y$. 
\end{proof}

Another useful criterion is the following. Let $A_k(X)$ (resp. $A^k(X)$) denote the \emph{Chow group} of rationally equivalent cycle classes of dimension $k$ (resp. codimension $k$) in $X$ with $\bbR$-coefficients. 
If two cycles are rationally equivalent, they are also numerically equivalent, hence we have a map $A_k(X)\to N_k(X)$. 
For a morphism $Y\to X$, there is a push-forward map $A_k(Y)\to A_k(X)$. 

\begin{proposition}
\label{prop:subvariety}
Let $\gamma: Y\to X$ be a morphism between two projective varieties. Assume that $A_k(Y)\to N_k(Y)$ is an isomorphism and that the composite 
$\gamma_{*}: A_k(Y) \to A_k(X)\to N_k(X)$ is injective. Moreover, assume that $f: X\to W$ is a morphism to a projective variety $W$ whose exceptional locus is contained in $\gamma(Y)$. If a $k$-dimensional subvariety $Z\subset Y$ is an extremal cycle in $\Eff_k(Y)$ and if $e_f(\gamma(Z)) > 0$, then $\gamma(Z)$ is also extremal in $\Eff_k(X)$.  
\end{proposition}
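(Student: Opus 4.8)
The plan is to take an arbitrary effective decomposition $[\gamma(Z)] = \sum_{i=1}^r a_i [U_i]$ in $N_k(X)$, with $a_i > 0$ and each $U_i$ an irreducible $k$-dimensional subvariety of $X$, and to show that each $[U_i]$ lies on the ray spanned by $[\gamma(Z)]$. Throughout I may assume $\dim \gamma(Z) = k$, since otherwise $\gamma(Z)$ is not a $k$-cycle and the statement is vacuous; write $d = \deg(Z/\gamma(Z)) > 0$, so that $\gamma_*[Z] = d\,[\gamma(Z)]$.

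First I would use Proposition~\ref{prop:main-tool} to locate the $U_i$. Since $e_f(\gamma(Z)) > 0$, taking $m = k-1$ gives $e_f(\gamma(Z)) \geq k - m = 1$, so the proposition applies to the decomposition above and yields $e_f(U_i) \geq 1$ for every $i$. Thus each $U_i$ drops dimension under $f$, so a general point of $U_i$ lies on a positive-dimensional fiber of $f$; hence $U_i$ is contained in the exceptional locus of $f$, which by hypothesis lies in $\gamma(Y)$.

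Next I would lift each $U_i$ to $Y$. Because $U_i \subseteq \gamma(Y)$ and $\dim U_i = k$, some irreducible component of $\gamma^{-1}(U_i)$ dominates $U_i$; intersecting it with general hyperplane sections of $Y$ if necessary, I obtain an irreducible $k$-dimensional subvariety $V_i \subseteq Y$ with $\gamma(V_i) = U_i$ and $\gamma|_{V_i}$ generically finite, so that $\gamma_*[V_i] = d_i\,[U_i]$ for some integer $d_i > 0$. Substituting $[U_i] = d_i^{-1}\gamma_*[V_i]$ into the decomposition, the identity $\gamma_*[Z] = d\,[\gamma(Z)] = \sum_i (d\,a_i/d_i)\,\gamma_*[V_i]$ reads $\gamma_*\big([Z] - \sum_i (d\,a_i/d_i)\,[V_i]\big) = 0$ in $N_k(X)$. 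By the injectivity of the composite $\gamma_*\colon A_k(Y)\to N_k(X)$ this relation already holds in $A_k(Y)$, and by the isomorphism $A_k(Y)\xrightarrow{\sim} N_k(Y)$ it holds in $N_k(Y)$:
\[
[Z] = \sum_{i=1}^r \frac{d\,a_i}{d_i}\,[V_i],
\]
an effective decomposition of $[Z]$ into $k$-dimensional classes with positive coefficients.

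Now the extremality of $Z$ in $\Eff_k(Y)$ forces $[V_i] = c_i\,[Z]$ with $c_i \geq 0$ for every $i$. Pushing forward gives $d_i\,[U_i] = \gamma_*[V_i] = c_i\,\gamma_*[Z] = c_i d\,[\gamma(Z)]$, so each $[U_i] = (c_i d/d_i)\,[\gamma(Z)]$ lies on the ray spanned by $[\gamma(Z)]$, proving that $\gamma(Z)$ is extremal in $\Eff_k(X)$. I expect the main obstacle to be the lifting step: the $U_i$ are a priori arbitrary effective cycles on $X$, and one must realize them as images of genuine $k$-dimensional subvarieties of $Y$ in order to transfer the relation back to $Y$. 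This is precisely what the containment of the exceptional locus of $f$ in $\gamma(Y)$ provides, while the two hypotheses on $\gamma_*$ are exactly what is needed to move the resulting identity into $N_k(Y)$, where the extremality of $Z$ can be invoked.
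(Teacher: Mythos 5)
Your proof is correct and follows essentially the same route as the paper: use Proposition~\ref{prop:main-tool} to force each component of an effective decomposition into the exceptional locus of $f$, hence into $\gamma(Y)$, lift the relation to $N_k(Y)$ via the two hypotheses on $\gamma_*$, and invoke the extremality of $Z$ there before pushing forward. If anything, your handling of the lifting step (the degrees $d$ and $d_i$, and cutting by hyperplanes to get generically finite lifts $V_i$) is more careful than the paper's, which simply asserts the existence of $Z_i'\subset Y$ with $\gamma_*[Z_i']=[Z_i]$.
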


\begin{proof}
Suppose that $[\gamma(Z)] = a_1[Z_1] + \cdots + a_r [Z_r]\in N_k(X)$ for subvarieties $Z_i\subset X$ with $a_i > 0$ for all $i$. Since 
$e_f(\gamma(Z)) > 0$, by Proposition~\ref{prop:main-tool}, $Z_i$ drops dimension under $f$.  Hence, $Z_i$ is contained in $\gamma(Y)$ for all $i$. 
By assumption, $[Z] = a_1[Z'_1] + \cdots + a_r [Z'_r]$  in $N_k(Y)$, where $Z'_i\subset Y$ such that $\gamma_{*}[Z'_i] = [Z_i]$. 
Since $Z$ is extremal in $\Eff_k(Y)$, $[Z'_i]$ has to be proportional to $[Z]$ as cycle classes in $Y$. Since $\gamma_{*}$ is injective, $[Z_i] = \gamma_{*}[Z'_i]$ is proportional to $\gamma_{*}[Z] $ as cycle classes in $X$ as well.   
\end{proof}

We will apply Proposition~\ref{prop:subvariety} to the case when $X$ is the moduli space of curves and $Y$ is an irreducible boundary divisor. Occasionally, we will be able to prove that effective cycles expressing a cycle class $[Z]$ are contained in a union of divisors.  We will then use the following technical result to deduce the extremality of $[Z]$.  

\begin{proposition}
\label{prop:union}
Let $Y = \bigcup_{i=0}^n D_i$ be a union of irreducible divisors $D_i\subset X$ such that $D_i \cap D_j$ consists of $m_{i,j}$ irreducible codimension two  subvarieties 
$D_{i,j,k}$ for $0\leq i < j \leq n$ and $k=1, \ldots, m_{i,j}$. Assume for all $i, j, k$ that $D_{i,j,k}$ is extremal in $D_i$ and that 
a linear combination $$\sum_{j=0\atop j\neq i}^{n}\sum_{k=1}^{m_{i,j}} a_{i,j,k} [D_{i,j,k}]$$ is effective in $D_i$ if and only if $a_{i,j,k}\geq 0$. Further assume that $A^1(D_i) \to N^1(D_i)$ is an isomorphism and $A^1(Y) \to A^2(X) \to N^2(X)$ are injective. Let $Z\subset D_0$ be an effective divisor. Finally, assume that for every effective expression 
$$[Z]= \sum_{i} a_i [Z_i]  \in \Eff^2(X),$$
where $a_i > 0$ and $Z_i\subset X$ is a codimension two subvariety,  $Z_i$ is contained in $Y$ for all $i$. If $Z$ is extremal in $\Eff^1(D_0)$, then $Z$ is also extremal in $\Eff^2(X)$. 
\end{proposition}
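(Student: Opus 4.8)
The plan is to take an arbitrary effective expression $[Z]=\sum_i a_i[Z_i]$ in $\Eff^2(X)$ with $a_i>0$ and each $Z_i$ an irreducible codimension two subvariety, and to prove that every $[Z_i]$ lies on the ray $\bbR_{\geq 0}[Z]$. By the standing hypothesis each $Z_i\subseteq Y$, and since $Z_i$ is irreducible of dimension $\dim X-2=\dim D_\ell-1$, it is a divisor on at least one component of $Y$; I would assign to each $Z_i$ the component $D_{\ell(i)}$ of smallest index containing it and set $w_\ell=\sum_{\ell(i)=\ell}a_i[Z_i]\in A^1(D_\ell)$, an effective class. The first step is to lift the numerical relation to $A^1(Y)$: the elements $\sum_\ell \iota_{\ell*}w_\ell$ and $\iota_{0*}[Z]$ of $A^1(Y)$ both map to $[Z]$ under the injective composite $A^1(Y)\to A^2(X)\to N^2(X)$, so they coincide in $A^1(Y)$, i.e. $(w_0-[Z],\,w_1,\dots,w_n)$ lies in the kernel of $\bigoplus_\ell A^1(D_\ell)\to A^1(Y)$.

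Next I would decompose this equality component by component. Since a top-dimensional subvariety of $Y$ is one of the $D_\ell$, every rational equivalence on $Y$ among codimension one cycles occurs on a single component, so the kernel above is generated by the stratum differences $[D_{i,j,k}]_{D_i}-[D_{i,j,k}]_{D_j}$. Using $A^1(D_\ell)\cong N^1(D_\ell)$, this yields real numbers $c_{i,j,k}$ with $w_\ell=\sum_{j>\ell,k}c_{\ell,j,k}[D_{\ell,j,k}]-\sum_{i<\ell,k}c_{i,\ell,k}[D_{i,\ell,k}]$ in $N^1(D_\ell)$ for $\ell\geq 1$, and $w_0-[Z]=\sum_{j,k}c_{0,j,k}[D_{0,j,k}]$ in $N^1(D_0)$. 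Each $w_\ell$ is effective, so applying the hypothesis that a combination of the shared strata of $D_\ell$ is effective exactly when its coefficients are nonnegative forces $c_{i,j,k}\geq 0$ when read off the lower slot and $\leq 0$ when read off the upper slot. Hence $c_{i,j,k}=0$ whenever $1\leq i<j$, while $\lambda_{\ell,k}:=-c_{0,\ell,k}\geq 0$, so that for $\ell\geq 1$ only the strata meeting $D_0$ survive and $w_\ell=\sum_k\lambda_{\ell,k}[D_{0,\ell,k}]$ in $N^1(D_\ell)$.

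Since $D_{0,\ell,k}\subset D_0$, pushing forward to $X$ identifies $\iota_{\ell*}[D_{0,\ell,k}]$ with $\iota_{0*}[D_{0,\ell,k}]$, so the whole relation collapses to a single effective decomposition inside $D_0$, namely $[Z]=\sum_{\ell(i)=0}a_i[Z_i]+\sum_{\ell\geq 1,k}\lambda_{\ell,k}[D_{0,\ell,k}]$ in $N^1(D_0)$. Now the extremality of $Z$ in $\Eff^1(D_0)$ applies: each summand is proportional to $[Z]$, so every $[Z_i]$ with $\ell(i)=0$ and every appearing stratum class $[D_{0,\ell,k}]$ is a multiple of $[Z]$ in $N^1(D_0)$. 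Pushing forward, each such $[Z_i]_X$ and each $[D_{0,\ell,k}]_X$ is a multiple of $[Z]_X$ in $N^2(X)$.

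The remaining, and most delicate, point, which I expect to be the main obstacle, is to control the original subvarieties $Z_i$ lying on a component $D_\ell$ with $\ell\geq 1$ but not contained in $D_0$: the argument so far only shows that their aggregate $\sum_{\ell(i)=\ell}a_i[Z_i]=\sum_k\lambda_{\ell,k}[D_{0,\ell,k}]$ pushes to a multiple of $[Z]_X$, not that each individual class does, and effective classes can in principle combine into such an aggregate without each being proportional. To close this gap I would invoke the extremality of the shared strata $D_{0,\ell,k}$ in $D_\ell$ together with the sign-rigidity of the effectiveness criterion, the point being to show that the strata appearing with $\lambda_{\ell,k}>0$ span a \emph{face} of $\Eff^1(D_\ell)$; then each summand $[Z_i]$ of an effective class lying in that face again lies in the face, hence is a nonnegative combination of those strata, and pushing forward (where each such stratum becomes a positive multiple of $[Z]_X$) forces $[Z_i]_X\in\bbR_{\geq 0}[Z]_X$. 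Establishing this face property is precisely where the extremality of the $D_{i,j,k}$ and the simplicial structure of the stratum cone must be combined, and it is the crux of the proof; granting it, every $[Z_i]_X$ lies on $\bbR_{\geq 0}[Z]_X$, which is the extremality of $Z$ in $\Eff^2(X)$.
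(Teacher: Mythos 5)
Your argument up through the sign analysis is essentially the paper's own proof: assigning each $Z_i$ to the lowest-index component containing it, lifting the relation through $A^1(Y)$ via the injectivity hypothesis, describing the kernel of $\bigoplus_\ell A^1(D_\ell)\to A^1(Y)$ by the stratum differences, and using sign rigidity to kill $c_{i,j,k}$ for $1\leq i<j$ and to get $\lambda_{\ell,k}\geq 0$ all coincide with the paper's computation. But the gap you flag at the end is genuine, and the route you propose for closing it would not work as stated: the cone spanned by a collection of extremal rays of $\Eff^1(D_\ell)$ need not be a face of $\Eff^1(D_\ell)$ (already for a round three-dimensional cone, two boundary rays span a subcone that is not a face), and none of the hypotheses gives such a ``simplicial'' face property for several strata at once.

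What closes the gap is an observation you almost made and then did not exploit. You correctly note that every stratum class $[D_{0,\ell,k}]$ appearing with $\lambda_{\ell,k}>0$ must be proportional to $[Z]$ in $N^1(D_0)$, by extremality of $Z$ in $D_0$. Now add that the classes $[D_{0,j,k}]$ are linearly independent in $N^1(D_0)$: if a linear combination of them vanished, then both it and its negative would be effective (the zero class is effective), and the sign-rigidity hypothesis applied twice forces all coefficients to be zero. Linear independence implies that at most one stratum class can be proportional to $[Z]$, so at most one coefficient $\lambda_{j_0,k_0}$ is positive. Consequently, for every $\ell\geq 1$ with $(\ell,k)\neq(j_0,k_0)$ for all $k$, you get $w_\ell=0$ in $N^1(D_\ell)$; since $w_\ell$ is a sum of classes of subvarieties with strictly positive coefficients, intersecting with a power of an ample class on $D_\ell$ shows the sum is empty, i.e.\ there are no $Z_i$ at all on those components. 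The only case left is $w_{j_0}=\lambda_{j_0,k_0}[D_{0,j_0,k_0}]$, and here the face you need is a single ray: the hypothesis that $D_{0,j_0,k_0}$ is extremal in $\Eff^1(D_{j_0})$ is precisely the statement that this ray is a face, so each $[Z_i]$ supported on $D_{j_0}$ is proportional to $[D_{0,j_0,k_0}]$ in $N^1(D_{j_0})$, hence pushes forward to a multiple of $[Z]$ in $N^2(X)$. This two-case dichotomy --- either $[Z]$ is proportional to no stratum class (and then all $Z_i$ lie in $D_0$), or to exactly one (and then the extra $Z_i$ live only on $D_{j_0}$ and are handled by the assumed extremality of that single stratum) --- is exactly how the paper finishes; with it, your proof is complete, without it, the final step remains unproven.
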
 

\begin{proof}
Suppose that 
\begin{eqnarray}
\label{eq:union-relation}
[Z]= \sum_i a_i [Z_i]  \in \Eff^2(X)
\end{eqnarray}
for $a_i > 0$ and $Z_i\subset X$ irreducible codimension two subvariety. We want to show that $[Z_i]$ is proportional to $[Z]$ in $N^2(X)$. 
By assumption, $Z_i$ is contained in $Y$ for all $i$. Reexpress the summation 
in \eqref{eq:union-relation} as  
$$S_0 + \cdots + S_n, \ \ \mbox{where} \ \ S_j = \sum_{\substack{Z_i \subset D_j, \\ Z_i \not\subset D_k  \  \mbox{for} \ k <j}} a_i [Z_i].$$  

By assumption, the second map in 
$$ \bigoplus_{i=1}^n N^1(D_i) \to N^1(Y) \to N^2(X) $$  
is injective. The kernel of the first map is generated by elements of type 
$$ (0, \cdots, [D_{i,j,k}], \cdots, - [D_{i,j,k}], \cdots, 0 ) $$
for $0\leq i < j \leq n$ and $1\leq k \leq m_{i,j}$, where the nonzero entries occur in the $i$th and $j$th places. Lifting \eqref{eq:union-relation} to the direct sum, there exist $a_{i,j,k}\in \bbR$ 
such that 
\begin{eqnarray}
\label{eq:D0}
[Z] = S_0 + \sum_{j=1}^n\sum_{k=1}^{m_{0,j}} a_{0,j,k} [D_{0,j,k}] \in N^1(D_0)
\end{eqnarray}
and 
\begin{eqnarray}
\label{eq:Di}
0 = S_i + \sum_{j=0}^{i-1}\sum_{k=1}^{m_{j,i}} (-a_{j,i,k}) [D_{j,i,k}] + \sum_{j=i+1}^n\sum_{k=1}^{m_{i,j}} a_{i,j,k} [D_{i,j,k}] \in N^1(D_i) 
\end{eqnarray}
for $1\leq i \leq n$.  

In the expressions, each $a_{i,j,k}$ appears twice with opposite signs. Since $S_i$ is effective, our assumption that the coefficients of $D_{i,j,k}$ in an effective sum have to be positive along with \eqref{eq:Di} implies that $a_{j,i,k} \geq 0$ for $0\leq j < i\leq n $ and $a_{i,j,k} \leq 0$ for 
$1\leq i < j \leq n$. Therefore, we conclude that $a_{i,j,k} = 0$ for $1\leq i < j \leq n$ and all $k$. Now \eqref{eq:Di} reduces to 
\begin{eqnarray}
\label{eq:Di0}
 S_i = \sum_{k=1}^{m_{0,j}} a_{0,j,k} [D_{0,i,k}] \in N^1(D_i) 
\end{eqnarray}
for $1\leq i \leq n$.  
First, assume that $[Z]$ is not proportional to any $[D_{0,j,k}]$ in $N^1(D_0)$. By assumption that $Z$ is extremal in $D_0$ and $a_{0,i,k} \geq 0$ for $i > 0$,  \eqref{eq:D0} and \eqref{eq:Di0} imply that $a_{0,j,k} = 0$ for all $j, k$, $S_i=0$ and  the classes $[Z_i]$ in $S_0$ are proportional to $[Z]$. The classes 
$[D_{0,j,k}]$ are independent in $D_0$, otherwise it would contradict the assumption that $\sum_{j=1}^n\sum_{k=1}^{m_{0,j}} a_{0,j,k} [D_{0,j,k}]$ is effective in $D_0$ if and only if $a_{0,j,k}\geq 0$. Therefore, we conclude that the composite 
$N^1(D_0) \to N^1(Y) \to N^2(X)$ is injective, and hence $Z$ is also extremal in $\Eff^2(X)$. If $[Z]$ is proportional to some $[D_{0,j,k}]$ in $N^1(D_0)$, then by \eqref{eq:D0}  and \eqref{eq:Di0}, $a_{0,j',k'} = 0$ for all $(j', k')\neq (j,k)$ and classes in the summation $S_i$ are proportional to $[Z]$ in $N^1(D_i)$ for all $i \geq 0$. We still conclude that  $Z$ is extremal in $\Eff^2(X)$. 
\end{proof}

\section{Preliminaries on moduli spaces of curves}
\label{sec:prelim-moduli}

Let $\BM_{g,n}$ be the moduli space of stable genus $g$ curves with $n$ \emph{ordered} marked points.  
The boundary $\Delta$ of $\BM_{g,n}$ consists of irreducible boundary divisors $\Delta_0$ and $\Delta_{i;S}$ for $1\leq i \leq [g/2]$, 
$S\subset \{1,\ldots,n \}$ such that $|S|\geq 2$ if $i =0$. A general point of $\Delta_0$ parameterizes an irreducible nodal curve 
of geometric genus $g-1$. A general point of $\Delta_{i;S}$ parameterizes a genus $i$ curve containing the marked points labeled by $S$, 
attached at one point to a genus $g-i$ curve containing the marked points labeled by the complement $S^c$. We use $\lambda$ to denote the first Chern class of the Hodge bundle. Let $\psi_i$ be the first Chern class of the cotangent line bundle associated to the $i$th marked point and  let $\psi= \sum_{i=1}^n \psi_i$.  

We may also consider curves with unordered marked points. Let $\TMM_{g,n} = \BM_{g,n}/\mathfrak{S}_n$ be the moduli space of stable genus $g$ curves with $n$ \emph{unordered} marked points. The boundary of $\TMM_{g,n}$ consists of irreducible boundary divisors $\TD_0$ and $\TD_{i;k}$ for $1\leq i \leq [g/2]$ and $0\leq k\leq n$ such that $k\geq 2$ if $i = 0$. 

Moduli spaces of curves of lower genera can be glued together to form the boundary of $\BM_{g,n}$. 
Set $ \HD_0 = \BM_{g-1, n+2}/\mathfrak{S}_2$, where $\mathfrak{S}_2$ interchanges the last two marked points. Identifying the last two marked points to form a node 
induces a gluing morphism 
$$\alpha_0: \HD_0\to \Delta_0. $$   
For $0< i < g/2$ or $i =g/2$ if $g$ is even and $n>0$, denote by $\HD_{i;S} = \BM_{i, |S| + 1} \times \BM_{g-i, n- |S|+1}$. 
Identifying the last marked points in the two factors induces   
$$ \alpha_{i;S}: \HD_{i;S} \to \Delta_{i; S}. $$
For $g$ even, $i = g/2$ and $n=0$, set $\HD_{g/2} = (\BM_{g/2, 1}\times \BM_{g/2,1})/\mathfrak{S}_2$ and 
$$ \alpha_{g/2}: \HD_{g/2}\to \Delta_{g/2}$$ is induced by identifying the two marked points to form a node. Restricted to the complement of the locus of curves with more than one node, the gluing morphisms are isomorphisms. Later on when studying codimension two 
boundary strata of $\BM_{g,n}$, this will help us identify them with boundary divisors on moduli spaces of curves of lower genera. 

In order to study extremal higher codimension cycles on $\BM_{g,n}$ and $\TMM_{g,n}$, we need to use the fact that the codimension one boundary strata are extremal. This is well-known (see e.g. \cite[1.4]{RullaThesis}), and we explain it briefly as follows. 

\begin{proposition}
\label{prop:boundary-extremal}
Every irreducible boundary divisor is extremal on $\BM_{g,n}$ and $\TMM_{g,n}$. 
\end{proposition}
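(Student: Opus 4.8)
The plan is to exhibit, for each irreducible boundary divisor $D$, a morphism $f\colon \BM_{g,n}\to Y$ (or $\TMM_{g,n}\to Y$) under which $D$ is \emph{the unique} divisor that drops dimension, and then invoke Proposition~\ref{prop:extremal} with $k=\dim \BM_{g,n}-1$ and $m=k-1$, so that the index condition $e_f(Z)\geq k-m=1$ singles out exactly the divisors contracted by $f$. If $D$ is the only such divisor, then its class is trivially linearly independent (a single nonzero class), and Proposition~\ref{prop:extremal} yields that $D$ is extremal in $\Eff_{k}(\BM_{g,n})=\Eff^1(\BM_{g,n})$. Thus the whole argument reduces to producing, for each boundary divisor, a contraction morphism whose exceptional locus is precisely that divisor.

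The natural source of such morphisms is the collection of \emph{alternate modular compactifications} of $\MM_{g,n}$ alluded to just after Proposition~\ref{prop:extremal}, together with the forgetful and clutching structure recalled in this section. First I would treat $\Delta_0$: contracting the nonseparating node (passing to a compactification in which an irreducible nodal curve is replaced by its normalization with an unordered pair of points, i.e.\ a map factoring through the quotient $\HD_0=\BM_{g-1,n+2}/\mathfrak S_2$ followed by gluing) gives a morphism that lowers dimension exactly on $\Delta_0$ while being finite onto its image elsewhere, so $e_f\geq 1$ holds only on $\Delta_0$. For a separating divisor $\Delta_{i;S}$, I would use a morphism that contracts one of the two glued components to a point in an appropriate weighted-pointed or Hassett-type compactification; concretely, one arranges the stability condition so that the genus $i$ tail carrying the markings in $S$ is destabilized and collapsed, which drops dimension exactly along $\Delta_{i;S}$ and nowhere else. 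The key technical point is to verify that for each such $f$ the divisor in question really is the \emph{only} subvariety of dimension $k$ with $e_f\geq 1$; this is where the specific geometry of the chosen compactification enters.

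Rather than construct a single global contraction, I expect it is cleaner to argue \emph{one divisor at a time}: fix a boundary divisor $D$ and build a tailored morphism $f_D$ contracting only $D$, apply Proposition~\ref{prop:extremal} to conclude $D$ is extremal, and repeat for each $D$. Since extremality of a ray is a local property of the cone, establishing it separately for every boundary divisor suffices to prove the proposition. For the unordered space $\TMM_{g,n}$, the same morphisms descend through the quotient by $\mathfrak S_n$ (the clutching maps $\alpha_0,\alpha_{i;S},\alpha_{g/2}$ are already set up $\mathfrak S_2$- and $\mathfrak S_n$-equivariantly), so the argument transfers essentially verbatim, with $\TD_0$ and $\TD_{i;k}$ playing the roles of $\Delta_0$ and $\Delta_{i;S}$.

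The main obstacle I anticipate is not the divisorial statement of Proposition~\ref{prop:extremal} but the construction and verification of the contraction morphisms: one must exhibit, for \emph{every} boundary divisor simultaneously, a projective target and a morphism whose exceptional locus is exactly that divisor, and confirm that no other divisor is contracted. In low genus or for the irreducible divisor $\Delta_0$ this is straightforward, but for the separating divisors one must check that the stability modifications used to collapse a tail do not inadvertently contract a neighboring boundary stratum of the same dimension. Since this fact is classical (cf.\ \cite[1.4]{RullaThesis}), I would lean on the existence of the relevant modular contractions as established there and restrict the new work to matching each divisor with its contracting morphism and recording that the exceptional locus is irreducible and equal to the divisor.
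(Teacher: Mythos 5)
Your strategy—realize each boundary divisor as the exceptional locus of a morphism and apply Proposition~\ref{prop:extremal}—has a genuine gap: the contraction morphisms it requires do not exist for most of the divisors. For $\Delta_{i;S}$ with $i\geq 1$ you propose to ``arrange the stability condition so that the genus $i$ tail is destabilized and collapsed'' in a Hassett space, but this is impossible: as the paper itself records at the end of Section~\ref{sec:prelim-moduli}, whenever $g_X\geq 1$ the inequality $2g_X-2+\#(X\cap \overline{C\backslash X})+\sum_{p_i\in X}a_i>0$ holds for \emph{every} permissible weight vector $\calA$, so the exceptional locus of $f_{\calA}$ consists only of curves with \emph{rational} tails carrying at least three markings of total weight at most one. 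No choice of weights ever collapses a positive-genus tail; the only available contraction of that type is $\ps$, which handles $\Delta_{1;\emptyset}$ alone, and producing contractions of the remaining $\Delta_{i;S}$ is part of the largely open Hassett--Keel program. (Note also that a rational tail with exactly two marked points has no moduli, so even for $\Delta_{0;S}$ with $|S|=2$ the Hassett map drops no dimension and yields $e_{f_{\calA}}=0$.) Your proposal for $\Delta_0$ is likewise not a morphism: ``replacing an irreducible nodal curve by its normalization with an unordered pair of points'' does not vary algebraically in families, and the gluing map $\alpha_0\colon \HD_0\to\Delta_0$ goes in the opposite direction and is generically finite, not a contraction of $\BM_{g,n}$. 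The one map that does realize normalization, the Torelli map $\tau\colon\BM_g\to\calA^{\sat}_g$, exists only for $n=0$ and contracts \emph{all} boundary divisors simultaneously (which, together with independence of the $\delta_i$, would salvage your argument for $\BM_g$, but not for $\BM_{g,n}$ with $n\geq 1$ or for $\TMM_{g,n}$). Finally, even where suitable morphisms exist, Proposition~\ref{prop:extremal} requires the classes of all contracted divisors to be linearly independent; in $\BM_{0,n}$ the boundary classes satisfy Keel's relations, so this is an additional nontrivial verification. Your fallback citation of \cite{RullaThesis} does not fill these holes: Rulla proves extremality by intersection-theoretic arguments, not by constructing modular contractions.

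The paper's actual proof is entirely different and elementary: for each boundary divisor it exhibits a curve sweeping out the divisor and having negative intersection with it---for $\Delta_0$, glue a varying point of a fixed genus $g-1$ pointed curve to a fixed point; for $\Delta_{i;S}$, glue a varying point on the genus $i$ component to a fixed point of the complementary component---and then invokes \cite[Lemma 4.1]{ChenCoskun}, which says that a divisor containing such a moving curve of negative intersection is extremal. The single exceptional case, $\Delta_0\subset\BM_{1,n}$ with class $12\lambda$, is handled by observing that $\lambda$ induces a fibration and hence spans an extremal ray of both the nef and the effective cones. If you wish to keep your framework, you must either restrict to $\BM_g$ with the Torelli map or supply an argument of this moving-curve type for all the divisors that no known morphism contracts.
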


\begin{proof}
In general, one can exhibit a moving curve in a boundary divisor that has negative intersection with the divisor, which implies that it is extremal by \cite[Lemma 4.1]{ChenCoskun}.  For $\Delta_0$, fix a curve $C$ of genus $g-1$ with $n+1$ distinct fixed points $p_1, \dots, p_{n+1}$. One obtains the desired moving curve by gluing  a varying point on $C$ to $p_{n+1}$. For $\Delta_{i;S}$, fix a curve $C$ of genus $i$ with distinct marked points labeled by $S$ and a curve $C'$ of genus $g-i$ with distinct marked points labeled by $S^c$. One obtains the desired moving curve by gluing a fixed point on $C'$ distinct from the previously chosen points to a varying point on $C$. The only exceptional case is $\Delta_0$ in $g=1$. Its class equals $12\lambda$, which is semi-ample. However, $\lambda$ induces a fibration over $\BM_{1,1}$, hence it spans an extremal ray in both the nef cone and the effective cone.   
\end{proof}

We also need to use several other compactifications of $\MM_{g,n}$. Let $\tau: \BM_g\to \calA^{\sat}_g$ denote the \emph{extended Torelli map} from $\BM_g$ to the Satake compactification of the moduli space $\calA_g$ of $g$-dimensional principally polarized abelian varieties (see e.g. \cite[III. 16]{BHPV}). It maps a stable curve to the product of the Jacobians of the irreducible components of its normalization. The exceptional locus of $\tau$ is the total boundary $\Delta$. 

Let $\ps: \BM_{g,n} \to \BM_{g,n}^{\ps}$ be the \emph{first divisorial contraction} of the log minimal model program for $\BM_{g,n}$ (\cite{HassettHyeonContraction, AFSV}), where $\BM_{g,n}^{\ps}$ is the moduli space of genus $g$ \emph{pseudostable} curves with $n$ ordered marked points. It contracts $\Delta_{1; \emptyset}$ only, replacing an unmarked elliptic tail by a cusp. 

Let $\calA = \{ a_1, \ldots, a_n   \}$ be a collection of $n$ rational numbers such that $0< a_i \leq 1$ for all $i$ and $2g-2 +\sum_{i=1}^n a_i > 0$. The moduli space of {\em weighted} stable curves
$\BM_{g,\calA}$ parameterizes the data $(C, p_1, \dots, p_n, \calA)$ such that 
\begin{itemize}
\item $C$ is a connected, reduced, at-worst-nodal arithmetic genus $g$ curve. 
\item  $p_1, \ldots, p_n$ are smooth points of $C$ assigned the weights $a_1, \ldots, a_n$, respectively, where the total weight of any points that coincide is at most one.
\item   for every irreducible component $X$ of $C$, the divisor class $K_C + \sum_{i=1}^n a_i p_i$ is ample restricted to $X$, i.e. numerically 
$$2g_X - 2 +\#(X\cap \overline{C\backslash X}) + \sum_{p_i\in X} a_i > 0,$$ where $g_X$ is the arithmetic genus of $X$. 
\end{itemize}
If $\calA = \{ 1, \ldots, 1 \}$, then $\BM_{g,\calA} =\BM_{g,n}$. Hassett constructs a morphism $f_{\calA}: \BM_{g,n} \to \BM_{g, \calA}$ that modifies the locus of curves violating the stability inequality in the above \cite{HassettWeight}. Note that if $g_X \geq 1$ or $\#(X\cap \overline{C\backslash X})\geq 2$ for $C$ parameterized by $\BM_{g,n}$, 
the above inequality always holds for any permissible $\calA$. Hence, the exceptional locus of $f_{\calA}$ consists of curves that have a rational tail with at least three marked points of total weight at most one.

\section{The effective cones of $\BM_{3}$ and $\BM_{3,1}$}
\label{sec:genus-3}

We want to study the effective cone of higher codimension cycles on $\BM_{g,n}$. 
First, consider the case of codimension two and $n=0$. Since $\dim \BM_2 = 3$, a cycle of codimension two on $\BM_2$ is a curve, and the cone of curves of $\BM_2$ is known to be spanned by the one-dimensional topological strata ($F$-curves). In this section, we introduce our methods by studying the first interesting case $g=3$ in detail. In later sections, we will generalize some of the results to $g\geq 4$ or $n > 0$.  

The Chow ring of $\BM_3$ was computed in \cite{FaberChow}.  For ease of reference, we preserve Faber's notation introduced in \cite[p. 340--343]{FaberChow}: 

\begin{itemize}
\item Let $\Delta_{00}\subset \BM_3$ be the closure of the locus parameterizing irreducible curves with two nodes. 

\item Let $\Delta_{01a}\subset \BM_3$ be the closure of the locus parameterizing a rational nodal curve attached to a genus two curve at one point. 

\item Let $\Delta_{01b}\subset \BM_3$ be the closure of the locus parameterizing an elliptic nodal curve attached 
to an elliptic curve at one point. 

\item Let $\Delta_{11}\subset \BM_3$ be the closure of 
the locus parameterizing a chain of three elliptic curves. 

\item Let $\Xi_0\subset \BM_3$ be the closure of the locus parameterizing 
irreducible nodal curves in which the normalization of the node consists of two conjugate points under the hyperelliptic involution. 

\item Let $\Xi_1\subset \BM_3$ be the closure of the locus parameterizing 
two elliptic curves attached at two points. 

\item Let $H_1\subset \BM_3$ be the closure of the locus of curves consisting of an elliptic tail attached to a genus two curve at a Weierstrass point. 
\end{itemize}

The codimension two boundary strata of
$\BM_3$ consist of $\Delta_{00}$, $\Delta_{01a}$, $\Delta_{01b}$, $\Delta_{11}$ and $\Xi_1$. 
We denote the cycle class of a locus by the corresponding small letter, such as $\delta_{00}$ for the class of $\Delta_{00}$. By \cite{FaberChow}, the Chow group $A^2(\BM_3)$ is isomorphic to $N^2(\BM_3)$, with a basis given by $\delta_{00}$, $\delta_{01a}$, $\delta_{01b}$, $\delta_{11}$, $\xi_0$, $\xi_1$ and $h_1$ over $\bbR$.  

The interior of $\Delta_0$ parameterizing irreducible curves with exactly one node is given by
$$\Int \Delta_{0} = \Delta_0 - \Delta_{00} - \Delta_{01a} - \Delta_{01b} - \Xi_1. $$
Faber shows that $A^1(\Int \Delta_0)$ is generated by $\xi_0$ (\cite[Lemma 1.12]{FaberChow}). 
In particular, $A^1(\Delta_0)$ is generated by  $\delta_{00}$, $\delta_{01a}$, $\delta_{01b}$,  $\xi_0$ and $\xi_1$, and 
$A^1(\Delta_0) \to N^2(\BM_3)$ is injective.  

The interior of $\Delta_1$ parameterizing the union of a smooth genus one curve and a smooth genus two curve attached at one point, is given by 
$$ \Int \Delta_1 = \Delta_1 - \Delta_{01a} - \Delta_{01b} - \Delta_{11}. $$
Faber shows that $A^1(\Int \Delta_1)$ is generated by $h_1$ (\cite[Lemma 1.11]{FaberChow}). In particular, 
$A^1(\Delta_1)$ is generated by $\delta_{01a}$, $\delta_{01b}$, $\delta_{11}$ and $h_1$, and 
$A^1(\Delta_1) \to N^2(\BM_3)$ is injective.

Before studying effective cycles of codimension two in $\BM_3$, we need to study effective divisors in $\Delta_0$ and $\Delta_1$. 
Recall the gluing maps 
$$\alpha_{0}: \HD_0 = \TMM_{2,2} \to \Delta_0, $$
$$\alpha_1: \HD_1 = \BM_{1,1}\times \BM_{2,1}\to \Delta_1. $$ 
By the divisor theory of $\BM_{g,n}$ and $\TMM_{g,n}$, we know that $A^1( \HD_0) = N^1( \HD_0)$ is generated by  
the classes of inverse images of $\Delta_{00}$, $\Delta_{01a}$, $\Delta_{01b}$,  $\Xi_0$ and $\Xi_1$ under $\alpha_0$. Similarly, 
$A^1(\HD_1) = N^1(\HD_1)$ is generated by the classes of  inverse images of 
$\Delta_{01a}$, $\Delta_{01b}$, $\Delta_{11}$ and $H_1$ under $\alpha_1$. 
In particular, $(\iota\circ\alpha_i)_{*}: A^1( \HD_i) \to A^2(\BM_3)$ 
is injective for $i=0, 1$, where $\iota: \Delta\hookrightarrow \BM_3$ 
is the inclusion. For ease of notation, we denote the inverse image of a cycle class under $\alpha_i$ by the same symbol. 

\begin{lemma}
\label{lem:effd1}
The classes $\delta_{01a}$, $\delta_{01b}$, $\delta_{11}$ and $h_1$ are extremal in $\Eff^1(\HD_1)$. 
\end{lemma}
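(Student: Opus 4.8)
The plan is to exploit the product structure $\HD_1 = \BM_{1,1}\times\BM_{2,1}$ and reduce the extremality of each of these four divisor classes to the extremality of a boundary (or geometrically natural) divisor on one of the two factors, via Corollary~\ref{cor:prod}. First I would identify each of the four divisors on $\HD_1$ with a product of a divisor on one factor and the whole of the other factor. Under the gluing map $\alpha_1$, the last marked point on each factor is the node; so a boundary divisor on $\HD_1$ is pulled back from a boundary behavior on exactly one of the two pieces. Concretely, $\Delta_{11}$ (a chain of three elliptic curves) should correspond to $\BM_{1,1}\times\delta_{1;\{1\}}$-type locus inside $\BM_{2,1}$ where the genus two curve degenerates into a chain of two elliptic curves meeting the attaching node appropriately; $H_1$ (elliptic tail meeting a genus two curve at a Weierstrass point) should be $\BM_{1,1}\times W$, where $W\subset\BM_{2,1}$ is the divisor of genus two curves marked at a Weierstrass point; and $\delta_{01a}$, $\delta_{01b}$ should correspond to products in which one factor contributes its boundary divisor $\Delta_0$ (an irreducible nodal degeneration) while the other factor is taken whole.

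The key steps, in order, are: (1) make the four identifications above precise, writing each class as $[D]\times[\text{factor}]$ or $[\text{factor}]\times[D]$ for an explicit divisor $D$ on $\BM_{1,1}$ or on $\BM_{2,1}$; (2) establish that each such $D$ is extremal in $\Eff^1$ of its factor. For the boundary divisors $\Delta_0$ on $\BM_{1,1}$ and the relevant boundary divisors on $\BM_{2,1}$, extremality is immediate from Proposition~\ref{prop:boundary-extremal}. For $H_1$ I would need the extremality of the Weierstrass divisor $W\subset\BM_{2,1}$; since $\BM_{2,1}$ has low dimension and a well-understood effective cone, this should follow either from Proposition~\ref{prop:boundary-extremal} if $W$ is boundary-related, or from a direct moving-curve argument exhibiting a covering family of curves with negative intersection against $W$. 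Then (3) apply Corollary~\ref{cor:prod} to conclude that each product class is extremal in $\Eff^1(\HD_1)$.

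The main obstacle I anticipate is step (2) for the class $h_1$: unlike the pure boundary divisors $\delta_{01a}, \delta_{01b}, \delta_{11}$, the Weierstrass locus is not a boundary divisor of $\BM_{2,1}$, so its extremality does not come for free from Proposition~\ref{prop:boundary-extremal} and requires a genuine positivity argument on $\BM_{2,1}$. I expect to handle this by producing an explicit moving (covering) curve in $\BM_{2,1}$ meeting $W$ negatively, using \cite[Lemma 4.1]{ChenCoskun} as invoked in the proof of Proposition~\ref{prop:boundary-extremal}; a natural candidate is a pencil of genus two curves with the marked point forced to track a non-Weierstrass point, so that the family sweeps out a surface yet meets $W$ in the expected nonpositive number. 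A secondary subtlety is verifying that the product decompositions in step (1) are correct on the nose (including that no component is missed and that the $\mathfrak{S}_2$-symmetries on the factors do not obstruct the identification), but this is a bookkeeping matter about the gluing map $\alpha_1$ rather than a conceptual difficulty.
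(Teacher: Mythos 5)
Your skeleton is the same as the paper's: identify $\delta_{01a}$ with the fiber class of $\HD_1=\BM_{1,1}\times\BM_{2,1}$ over $\BM_{1,1}$ (equivalently, the pullback of the boundary point $\delta_0$ of $\BM_{1,1}$), identify $\delta_{01b}$, $\delta_{11}$ and $h_1$ with the pullbacks from $\BM_{2,1}$ of $\delta_0$, $\delta_1$ and the Weierstrass divisor class $w$, and then apply Corollary~\ref{cor:prod}. Your identifications in step (1) are all correct, and your worry about $\mathfrak{S}_2$-symmetries is moot here: the two factors of $\HD_1$ have different genera, so no quotient enters into the gluing map $\alpha_1$ (that only happens for $\HD_{g/2}$ with $g$ even).

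The genuine gap is exactly the step you flagged: extremality of $w$ in $\Eff^1(\BM_{2,1})$. The paper disposes of it by citation: \cite[Section 3.3]{RullaThesis} computes the effective cone of $\BM_{2,1}$ and shows it is spanned by $\delta_0$, $\delta_1$ and $w$, so all three classes are extremal at once. Your proposed substitute --- a pencil of genus two curves ``with the marked point forced to track a non-Weierstrass point'' --- cannot work, even in principle. The covering-curve criterion of \cite[Lemma 4.1]{ChenCoskun} invoked in Proposition~\ref{prop:boundary-extremal} requires a family of curves that lie \emph{in} the divisor $W$, sweep out $W$, and meet $W$ negatively; an irreducible curve not contained in $W$ automatically has nonnegative intersection with $W$, so a family whose marked point is non-Weierstrass (hence disjoint from $W$ as a family of moduli points, or at best meeting it properly) can never certify extremality. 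Moreover, since a fixed genus two curve has only six Weierstrass points, any genuine covering family of $W$ must vary the underlying curve --- for instance, via admissible double covers, fixing five of the six branch points (among them the image of the marked Weierstrass point) and varying the sixth --- and one must then actually verify the negativity of the resulting intersection number, which is a nontrivial computation you have not carried out. The cleanest repair is the paper's: quote Rulla's determination of $\Eff^1(\BM_{2,1})$, which covers $\delta_0$, $\delta_1$ and $w$ simultaneously, and then conclude by Corollary~\ref{cor:prod} exactly as you propose.
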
 

\begin{proof}
The effective cone of divisors on $\BM_{2,1}$ is computed in \cite[Section 3.3]{RullaThesis}. 
Note that $\delta_{01a}$ corresponds to the fiber class of $\HD_1$ over $\BM_{1,1}$, while $\delta_{01b}$, $\delta_{11}$ and $h_1$ are the pullbacks of the extremal divisor classes $\delta_0$, $\delta_1$ and $w$ from $\BM_{2,1}$, respectively, where $w$ 
 is the divisor class of the locus of  curves with a marked Weierstrass point in $ \BM_{2,1}$.  
By Corollary \ref{cor:prod}, $\delta_{01a}$, $\delta_{01b}$, $\delta_{11}$ and $h_1$ are extremal. 
\end{proof}

Let $BN^1_{2}\subset \TMM_{2,2}$ be the closure of the locus of curves such that the two unordered marked points are conjugate under 
the hyperelliptic involution. It has divisor class    
$$ [BN^1_{2}] = -\frac{1}{2}\lambda + \frac{1}{2}\psi - \frac{3}{2} \delta_{0; 2} - \delta_{1; 0}, $$
see e.g. \cite{LoganKodaira}. 

\begin{lemma}
\label{lem:effd0}
The classes $\delta_{0}$, $\delta_{0; 2}$, $\delta_{1; 0}$, $\delta_{1; 1}$ and $[BN^1_{2}]$ are extremal in $\Eff^1(\HD_0)$. Consequently $\delta_{00}$, $\delta_{01a}$, $\delta_{01b}$, $\xi_1$ and $\xi_0$ are extremal as classes in $\Eff^1(\HD_0)$. 
\end{lemma}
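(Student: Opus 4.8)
The plan is to identify $\HD_0 = \TMM_{2,2}$ with a moduli space whose divisor theory is well understood, and then to exhibit enough extremal divisor classes to pin down the five asserted classes. Since the marked points are unordered and there are only two of them, I would first relate $\TMM_{2,2}$ to the space $\BM_{2,2}$ of curves with ordered marked points via the quotient by $\mathfrak{S}_2$, or better, exploit the forgetful/gluing structure directly. The classes $\delta_0$, $\delta_{0;2}$, $\delta_{1;0}$, $\delta_{1;1}$ are the boundary divisors of $\TMM_{2,2}$, so by Proposition~\ref{prop:boundary-extremal} each of them is already extremal in $\Eff^1(\HD_0)$. Thus the only genuinely new work is establishing that the Brill--Noether-type class $[BN^1_2]$ spans an extremal ray.

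For the class $[BN^1_2]$, I would look for a fibration or covering structure on $\TMM_{2,2}$ under which $BN^1_2$ becomes a fiber-type locus, so that Proposition~\ref{prop:extremal} or a moving-curve argument applies. Concretely, every genus two curve is hyperelliptic, and the hyperelliptic involution is canonical; the locus $BN^1_2$ consists of those configurations where the two marked points are exchanged by the involution. I would try to exhibit a moving curve in $BN^1_2$ (for instance, varying the conjugate pair of points while fixing the underlying genus two curve, or varying in a pencil) that has negative intersection with $[BN^1_2]$ and nonnegative intersection with the other effective classes; negative self-intersection against a covering family forces extremality by the criterion of \cite[Lemma 4.1]{ChenCoskun} cited in the proof of Proposition~\ref{prop:boundary-extremal}. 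Alternatively, I would express $[BN^1_2]$ in terms of a known extremal divisor on a related space (e.g.\ the Weierstrass divisor $w$ on $\BM_{2,1}$ used in Lemma~\ref{lem:effd1}) and transport extremality via Corollary~\ref{cor:prod} or a pullback argument.

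Once the five classes on $\HD_0$ are known to be extremal, the ``consequently'' part follows by matching them with the pulled-back boundary classes under $\alpha_0$. Since $\alpha_0 \colon \HD_0 \to \Delta_0$ is the gluing map identifying the last two marked points, the preimages of $\Delta_{00}$, $\Delta_{01a}$, $\Delta_{01b}$, $\Xi_1$, $\Xi_0$ pull back to $\delta_0$ (the node-producing boundary in $\TMM_{2,2}$), $\delta_{0;2}$, $\delta_{1;0}$, $\delta_{1;1}$, and $[BN^1_2]$ respectively, under the dictionary already set up in the excerpt (where inverse images are denoted by the same symbols). I would simply verify this correspondence geometrically: a second node in the glued curve produces each listed stratum, while the Weierstrass/conjugacy condition on the glued node is exactly $BN^1_2$. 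Extremality of a class is preserved under this identification of divisors on the same space, giving the stated list $\delta_{00}, \delta_{01a}, \delta_{01b}, \xi_1, \xi_0$.

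The main obstacle I expect is the extremality of $[BN^1_2]$ itself, since unlike the boundary divisors it is not a topological stratum and has no a priori negativity. The explicit class formula $[BN^1_2] = -\tfrac{1}{2}\lambda + \tfrac{1}{2}\psi - \tfrac{3}{2}\delta_{0;2} - \delta_{1;0}$ will be essential here: I anticipate that the most efficient route is to produce a covering curve of $BN^1_2$ and compute its intersection numbers against $\lambda$, $\psi$, and the boundary using standard genus two formulas, checking that the self-intersection is negative. Getting the signs and coefficients right in that intersection computation, and confirming that no other effective divisor class can appear in a decomposition of $[BN^1_2]$, is the delicate step; everything else reduces to bookkeeping with the gluing map and the already-established boundary extremality.
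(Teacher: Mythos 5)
Your proposal matches the paper's proof essentially step for step: the boundary classes are handled by Proposition~\ref{prop:boundary-extremal}, and the extremality of $[BN^1_2]$ is proved exactly by the moving curve you describe (fix a general genus two curve and vary a conjugate pair of points), whose intersection numbers $[B]\cdot\lambda = 0$, $[B]\cdot\psi = 16$, $[B]\cdot\delta_{0;2} = 6$, $[B]\cdot\delta_{1;0} = 0$ give $[B]\cdot[BN^1_2] = -1 < 0$ via the displayed class formula, after which the dictionary under $\alpha_0$ yields the ``consequently'' statement. The only minor slip is that the moving-curve criterion does not require nonnegative intersection with other effective classes, but this does not affect the argument.
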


\begin{proof}
By Proposition~\ref{prop:boundary-extremal}, the boundary divisors are known to be extremal. To obtain a moving curve $B$ in $BN^1_2$ that has negative intersection with it, fix a general genus two curve $C$ and vary a pair of conjugate points $(p_1, p_2)$ in $C$. Since $[B]\cdot \lambda = 0$, $[B]\cdot \psi = 16$, $[B]\cdot \delta_{0; 2} = 6$ and $[B]\cdot \delta_{1; 0} = 0$, 
we have $[B]\cdot [BN^1_{2}] < 0$.  Hence, $BN^1_{2}$ is extremal. Note that the classes $\delta_{00}$, $\delta_{01a}$, $\delta_{01b}$, $\xi_1$ and $\xi_0$ in $\HD_0$ correspond to 
 $\delta_{0}$, $\delta_{0; 2}$, $\delta_{1; 0}$, $\delta_{1; 1}$ and $[BN^1_{2}]$ in $\TMM_{2,2}$, respectively, thus proving the claim. 
 \end{proof}

\begin{theorem}
\label{thm:eff3}
The classes $\delta_{00}$, $\delta_{01a}$, $\delta_{01b}$, $\xi_1$, $\xi_0$, $\delta_{11}$ and $h_1$ are extremal in $\Eff^2(\BM_3)$.  
\end{theorem}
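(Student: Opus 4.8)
The plan is to lift the extremality already established inside the boundary (Lemmas~\ref{lem:effd0} and~\ref{lem:effd1}) up to $\BM_3$, by confining the effective representatives of each class to the boundary $\Delta = \Delta_0 \cup \Delta_1$ and then invoking Propositions~\ref{prop:subvariety} and~\ref{prop:union}. The seven classes split naturally into two groups according to which contraction of $\BM_3$ detects them. The first group, $\delta_{11}$ and $h_1$, consists of the classes supported on curves carrying a smooth unmarked elliptic tail; these I would treat with the pseudostable contraction $\ps\colon \BM_3 \to \BM_3^{\ps}$. The second group, $\delta_{00}, \delta_{01a}, \delta_{01b}, \xi_0, \xi_1$, consists of the classes supported in $\Delta_0$; these I would treat with the extended Torelli map $\tau\colon \BM_3 \to \calA_3^{\sat}$.

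For $\delta_{11}$ and $h_1$ I would argue directly. The exceptional locus of $\ps$ is exactly $\Delta_1 = \alpha_1(\HD_1)$, where it contracts the unmarked elliptic tail. Both $\Delta_{11}$ (whose two end components are elliptic tails) and $H_1$ (an elliptic tail glued to a genus two curve at a Weierstrass point) lie in $\Delta_1$ and drop dimension under $\ps$, so $e_{\ps}(\Delta_{11}) > 0$ and $e_{\ps}(H_1) > 0$. By Lemma~\ref{lem:effd1} the classes $\delta_{11}$ and $h_1$ are extremal in $\Eff^1(\HD_1)$, and $A^1(\HD_1) = N^1(\HD_1)$ with $(\iota\circ\alpha_1)_*$ injective. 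Applying Proposition~\ref{prop:subvariety} with $\gamma = \iota\circ\alpha_1$ and $f = \ps$ then yields that $\delta_{11}$ and $h_1$ are extremal in $\Eff^2(\BM_3)$.

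For the remaining five classes I would first check that $\tau$ contracts each of them, i.e. $e_\tau(Z) > 0$, by computing the image: since $\tau$ sends a stable curve to the product of the Jacobians of the components of its normalization, $\tau(Z)$ has strictly smaller dimension (for instance $\tau(\Delta_{00})$ is one-dimensional, $\tau(\Xi_1)$ is two-dimensional, and $\tau(\Xi_0)$ is at most three-dimensional, while each $Z$ has dimension four). Since the exceptional locus of $\tau$ is the total boundary $\Delta$, Proposition~\ref{prop:main-tool} forces every effective representative of such a class to be supported on $\Delta = \Delta_0 \cup \Delta_1$. I would then apply Proposition~\ref{prop:union} with $Y = \Delta_0 \cup \Delta_1$, $D_0 = \Delta_0$, $D_1 = \Delta_1$, and $D_0 \cap D_1 = \Delta_{01a} \cup \Delta_{01b}$. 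The required injectivity of $A^1(\HD_i) \to N^2(\BM_3)$ and the isomorphisms $A^1(\HD_i) = N^1(\HD_i)$ come from Faber's computation, while the extremality of $\Delta_{01a}, \Delta_{01b}$ in both $\Delta_0$ and $\Delta_1$ comes from Lemmas~\ref{lem:effd0} and~\ref{lem:effd1}. The classes $\delta_{00}, \xi_0, \xi_1$ fall under the main case of Proposition~\ref{prop:union}, whereas $\delta_{01a}, \delta_{01b}$, being the intersection classes themselves, fall under its final case.

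The main obstacle I anticipate is twofold. First, one cannot use the Torelli map uniformly: although $H_1$ lies in the boundary, its node positions are rigid (the Weierstrass attaching point is discrete and the tail's node is fixed up to translation on the elliptic curve), so $\tau$ is generically finite on $H_1$ and $e_\tau(H_1) = 0$. This is precisely why $h_1$ must be handled with $\ps$ rather than with $\tau$, and correctly matching each class to a contraction that contracts it is the crux of the argument. Second, to feed Proposition~\ref{prop:union} I must verify that an effective combination $a\,\delta_{01a} + b\,\delta_{01b}$ in $\HD_0$ (and in $\HD_1$) forces $a, b \geq 0$; I would deduce this from the explicit descriptions of $\Eff^1(\TMM_{2,2})$ and $\Eff^1(\BM_{1,1}\times\BM_{2,1})$ underlying the two lemmas, exhibiting moving curves that meet one of the two intersection classes negatively and the other nonnegatively.
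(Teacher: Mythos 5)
Your proposal is correct, and it stays entirely within the paper's toolkit (the contractions $\ps$ and $\tau$, Lemmas~\ref{lem:effd0} and~\ref{lem:effd1}, and Propositions~\ref{prop:main-tool}, \ref{prop:subvariety} and~\ref{prop:union}), but your case decomposition differs from the paper's in two places. First, the paper handles $\delta_{01b}$ on the $\Delta_1$ side, grouping it with $\delta_{11}$ and $h_1$: since the elliptic tail of a curve in $\Delta_{01b}$ is replaced by a cusp, $e_{\ps}(\Delta_{01b})>0$ and Proposition~\ref{prop:subvariety} applies directly; you instead route it through Proposition~\ref{prop:union} as an intersection class, which is equally valid and is exactly how the paper itself treats $\Delta_{01a}$ via the final case of that proposition. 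Second, for $\delta_{00}$ and $\xi_1$ the paper exploits the sharper bound $e_\tau\geq 2$: by Proposition~\ref{prop:main-tool} any effective representative is then a codimension two subvariety with $e_\tau\geq 2$, hence contained in $\Delta_0$ or equal to $\Delta_{11}$, and linear independence of $\delta_{00}$, $\xi_1$, $\delta_{11}$ together with extremality in $\HD_0$ finishes the argument without invoking Proposition~\ref{prop:union} for these classes. Your uniform use of Proposition~\ref{prop:union} for all five classes supported in $\Delta_0$ is correct but makes every one of them depend on the hypothesis you rightly flag as needing verification: that $a\,\delta_{01a}+b\,\delta_{01b}$ effective in $\HD_i$ forces $a,b\geq 0$. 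That verification is genuine but unproblematic: in $\HD_1=\BM_{1,1}\times\BM_{2,1}$ the two classes are pullbacks from the two factors, so restricting to general fibers of the two projections reduces the claim to effectivity on $\BM_{1,1}$ and $\BM_{2,1}$; in $\HD_0=\TMM_{2,2}$ one runs the moving-curve argument of Lemma~\ref{lem:i1}. In short, the paper's grouping buys economy of hypotheses, yours buys uniformity; both yield complete proofs.
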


\begin{proof}
Recall the Torelli map $\tau: \BM_g\to \calA^{\sat}_g$ and the first divisorial contraction $\ps: \BM_g \to \BM_g^{\ps}$ discussed 
in Section~\ref{sec:prelim-moduli}. For $g=3$, note that $\Delta_{01b}$, $\Delta_{11}$ and $H_1$ are contained in $\Delta_1$, and $e_{\ps}(\Delta_{01b}), e_{\ps}(\Delta_{11}), e_{\ps}(H_1) > 0$. Moreover, by Lemma~\ref{lem:effd1} their classes are extremal in $\Eff^1(\BM_{1,1}\times \BM_{2,1})$. Proposition~\ref{prop:subvariety} implies that they are extremal in $\Eff^2(\BM_3)$. 

The strata $\Delta_{00}$ and $\Xi_1$ are contained in $\Delta_0$ and have index $e_{\tau} \geq 2$. If $Z$ is a subvariety of codimension two in $\BM_3$ with $e_{\tau}(Z)\geq 2$, then $Z$ is either contained in $\Delta_0$ or in $\Delta_1$. Since $e_{\tau}(\Delta_1)=1$, if $Z$ is contained in $\Delta_1$ but not in $\Delta_0$, then $Z$ has to be $\Delta_{11}$. By Lemma~\ref{lem:effd0}, $\delta_{00}$ and $\xi_1$ are extremal in $\Eff^1(\TMM_{2,2})$. 
Moreover, $\delta_{00}$, $\xi_1$ and $\delta_{11}$ are linearly independent in $N^2(\BM_3)$. We thus conclude that 
$\delta_{00}$ and $\xi_1$ are extremal in $\Eff^2(\BM_3)$ by Proposition~\ref{prop:subvariety}. 

The remaining cases are $\Delta_{01a}$ and $\Xi_0$. A rational nodal tail does not have moduli.  Consequently, $e_{\tau}(\Delta_{01a}) = 1$ and $e_{\ps}(\Delta_{01a}) = 0$. Similarly we have $e_{\tau}(\Xi_0) = 1$ and $e_{\ps}(\Xi_0) = 0$. The dimension drops are too small to directly apply Proposition~\ref{prop:extremal}. Nevertheless, their $e_{\tau}$ indices are positive, hence we may apply Proposition~\ref{prop:union} to $Y= \Delta_0 \cup \Delta_1$. 
Recall that $N^1(\HD_i) \cong A^1(\HD_i) \to N^2(\BM_3)$ is injective for $i=0, 1$. Moreover, $\Delta_0\cap \Delta_1 = \Delta_{01a} \cup \Delta_{01b}$ and both components are extremal in $\HD_0$ and in $\HD_1$ by Lemmas~\ref{lem:effd1} and~\ref{lem:effd0}. Finally, $\xi_0$ is extremal in $\HD_0$ 
by Lemma~\ref{lem:effd0}. By Proposition~\ref{prop:union}, we  conclude that $\Delta_{01a}$ and $\Xi_0$ are extremal in $\Eff^2(\BM_3)$. 
\end{proof}

\begin{remark}
It would be interesting to find an extremal cycle of codimension two that is not contained in the boundary of $\BM_3$. Some natural geometric non-boundary cycles fail to be extremal. For instance, 
the closure $B_3$ of the locus of bielliptic curves in $\BM_3$ is not extremal. The class of $B_3$ was calculated in terms of 
another basis of $A^2(\BM_3)$ (\cite{FaberPagani}): 
$$ [B_3] = \frac{2673}{2}\lambda^2 - 267 \lambda\delta_0 - 651 \lambda\delta_1 + \frac{27}{2}\delta_0^2 + 69 \delta_0\delta_1 + 
\frac{177}{2}\delta_1^2 - \frac{9}{2}\kappa_2. $$
By \cite[Theorem 2.10]{FaberChow}, we can rewrite it as follows: 
$$ [B_3] = \frac{1}{2}\delta_{00} + \frac{25}{8}\delta_{01a} + \frac{19}{4}\delta_{01b} + 18\delta_{11} + \frac{15}{8}\xi_0 + 15 \xi_1 
+ 10h_1. $$ 
Therefore, $B_3$ is not extremal in $\Eff^2(\BM_3)$. 
\end{remark}

However, if we consider $\BM_{3,1}$ instead,  we are able to find an extremal non-boundary codimension two cycle. First, let us introduce some notation: 
\begin{itemize}
\item Let $H\subset \BM_3$ be the closure of the locus of hyperelliptic curves. 

\item Let $HP\subset \BM_{3,1}$ be the closure of the locus of hyperelliptic curves with a marked point. 

\item Let $HW \subset \BM_{3,1}$ be the closure of the locus of hyperelliptic curves with a marked Weierstrass point. 
\end{itemize}
Clearly $HW$ is a subvariety of codimension two in $\BM_{3,1}$, which is not contained in the boundary. Moreover, $HW\subset HP = \pi^{-1}(H)$ and $\pi: HW \to H$ is generically finite of degree eight, where $\pi: \BM_{3,1} \to \BM_3$ is the morphism forgetting the marked point. 

We will show that $HW$ is an extremal cycle in $\BM_{3,1}$. In order to prove this, we need to understand the divisor theory of $HP$ as well as the boundary components $\Delta_{1; \{1\}}$ and $\Delta_{1; \emptyset}$ of $\BM_{3,1}$. We further introduce the following cycles:  

\begin{itemize}
\item Let $HP_0 \subset HP$ be the closure of the locus of irreducible nodal hyperelliptic curves with a marked point. 

\item Let $HP_1 = HP\cap\Delta_{1; \{1\}} $ be the closure of the locus of curves consisting of a marked genus one component attached to a genus two component at a Weierstrass point. 

\item Let $HP_2 = HP\cap \Delta_{1;\emptyset}$ be the closure of the locus of curves consisting of a genus one component attached at a Weierstrass point to a marked genus two component.  

\item Let $\Theta \subset HP$ be the closure of the locus of two genus one curves attached at two points, one of the curves marked.  

\item Let $D_{01a}\subset  \Delta_{1; \{1\}}$ be the closure of the locus of a marked genus one curve attached to an irreducible nodal curve of geometric 
genus one. 

\item Let $D_{02b}\subset \Delta_{1; \{1\}}$ be the closure of the locus of a marked rational nodal curve attached to a genus two curve. 

\item Let $D_{12} \subset \Delta_{1; \{1\}}\cap \Delta_{1;\emptyset}$ be the closure of the locus of a chain of three curves of genera $2$, $0$ and $1$, respectively, such that the rational component contains the marked point. 

\item Let $D_{11a} \subset \Delta_{1; \{1\}}\cap \Delta_{1;\emptyset}$ be the closure of the locus of a chain of three genus one curves such that one of the two tails contains the marked point. 

\item Let $W_1 \subset \Delta_{1;\emptyset}$ be the closure of the locus of a genus one curve attached to a marked genus two curve such that the marked point is a Weierstrass point. 

\item Let $D_{02a} \subset \Delta_{1;\emptyset}$ be the closure of the locus of a rational nodal curve attached to a marked genus two curve. 

\item Let $D_{01b} \subset \Delta_{1;\emptyset}$ be the closure of the locus of a genus one curve attached to a marked irreducible nodal curve of geometric genus one. 

\item Let $D_{11b} \subset  \Delta_{1;\emptyset}$ be the closure of the locus of a chain of three genus one curves such that the middle component contains the marked point. 

\end{itemize}

Recall the gluing morphisms 
$$\alpha_{1;\{1\}}: \HD_{1;\{ 1\}} = \BM_{1,2} \times \BM_{2,1} \to \Delta_{1; \{1\}}, $$
$$ \alpha_{1; \emptyset}: \HD_{1; \emptyset} = \BM_{1,1}\times \BM_{2,2} \to \Delta_{1; \emptyset}. $$
For ease of notation, we denote  the inverse image of a class under the gluing maps by the same symbol. 

\begin{lemma}
\label{lem:hwhp}
\begin{enumerate}[label={\upshape(\roman*)}]

\item $N^1(HP) \cong A^1(HP)$ is generated by $hw$, $hp_0$, $hp_1$, $hp_2$ and $\theta$.  

\item The divisor classes $hw$, $hp_0$, $hp_1$, $hp_2$ and $\theta$ are extremal in $HP$.  Moreover, 
if a linear combination $a_1 \cdot hp_1 + a_2 \cdot hp_2$ is effective, then $a_1, a_2 \geq 0$.  

\item $N^1(\HD_{1;\{ 1\}} ) \cong A^1(\HD_{1;\{ 1\}} )$ is generated by $hw$, $d_{01a}$, $d_{02b}$, $d_{11a}$ and $d_{12}$.  Moreover, 
if a linear combination $b_1 \cdot hp_1 + b_2 \cdot d_{11a} + b_{3}\cdot d_{12}$ is effective, then $b_i \geq 0$ for all $i$.   
 
\item $N^1(\HD_{1;\emptyset} ) \cong A^1(\HD_{1;\emptyset} )$ is generated by $hp_2$, $w_1$, $d_{01b}$, $d_{02a}$, $d_{11a}$, $d_{11b}$ and $d_{12}$.   Moreover, 
if a linear combination $c_1 \cdot hp_2 + c_2 \cdot d_{11a} + c_3\cdot d_{12}$ is effective, then $c_i \geq 0$ for all $i$.
 
\item The kernel of $N^1(HP)\oplus N^1(\HD_{1;\{ 1\}} ) \oplus N^1(\HD_{1;\emptyset}) \to N^2(\BM_{3,1})$ is generated by 
$$(hp_1, -hp_1, 0), (hp_2, 0, -hp_2), (0, d_{11a}, - d_{11a}) \ \mbox{and} \ (0, d_{12}, - d_{12}).$$ 
\end{enumerate}
\end{lemma}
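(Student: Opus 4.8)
The plan is to treat the five assertions as a package that verifies, one space at a time, the hypotheses of Proposition~\ref{prop:union} for the three divisors $HP$, $\Delta_{1;\{1\}}$ and $\Delta_{1;\emptyset}$ whose union will eventually contain every effective cycle representing $[HW]$. I would prove (iii) and (iv) first, since they only involve products of small moduli spaces; then (i) and (ii) for the less standard space $HP$; and finally the gluing computation (v). Throughout, the two recurring tools are the product decomposition of Picard groups together with Corollary~\ref{cor:prod}, and the construction of moving curves with prescribed intersection numbers as in the proof of Proposition~\ref{prop:boundary-extremal}.

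For (iii) and (iv): because $\BM_{1,2}$, $\BM_{2,1}$, $\BM_{1,1}$ and $\BM_{2,2}$ are rational, for a product $X\times Y$ of two of them one has $N^1(X\times Y)\cong N^1(X)\oplus N^1(Y)$, and each factor satisfies $A^1\cong N^1$ by the known computations of these Picard groups (the genus two cases are in \cite[Section 3.3]{RullaThesis}, the genus one cases being elementary). I would then match each named divisor with a boundary class, a fiber class, or a pullback of the Weierstrass class $w$ from the genus two factor: on $\HD_{1;\{1\}}=\BM_{1,2}\times\BM_{2,1}$ the class $hp_1$ is the pullback of $w$, while $d_{11a}$ and $d_{12}$ are pullbacks of boundary divisors, and similarly on $\HD_{1;\emptyset}=\BM_{1,1}\times\BM_{2,2}$. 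The statements about the sign of coefficients in an effective combination of $hp_1,d_{11a},d_{12}$ (resp.\ $hp_2,d_{11a},d_{12}$) then reduce, via Corollary~\ref{cor:prod} and Lemmas~\ref{lem:effd1} and~\ref{lem:effd0}, to the extremality and independence of the corresponding classes in the effective cone of the relevant factor; I would exhibit a moving curve $B$ in each of these strata meeting exactly one of the three classes negatively, in order to separate them.

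For (i) and (ii): the key geometric input is that $HP=\pi^{-1}(H)$ is the universal curve over the hyperelliptic locus $H\subset\BM_3$, and that $H$ is the quotient by $\frakS_8$ of the space $\BM_{0,8}$ of eight branch points on $\PP^1$. I would use this presentation to compute $A^1(HP)$, show $A^1(HP)\cong N^1(HP)$, and check that the five listed classes, namely the Weierstrass divisor $hw$ together with the boundary-type divisors $hp_0,hp_1,hp_2,\theta$, generate it; the content is that the universal-curve fibration contributes no further classes beyond the section $hw$ and the pullbacks of the boundary of $H$. Extremality in (ii) I would obtain either from the fibration $HP\to H$ or, as in Proposition~\ref{prop:boundary-extremal}, by producing for each class a moving curve of negative intersection; for the sign statement on $a_1hp_1+a_2hp_2$ I would use two moving curves, one varying the genus one attaching data and one varying the Weierstrass data on the genus two side, meeting $hp_1$ and $hp_2$ with opposite patterns.

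Finally, for (v) the three divisors meet pairwise along the codimension two strata $HP_1=HP\cap\Delta_{1;\{1\}}$, $HP_2=HP\cap\Delta_{1;\emptyset}$, and $\Delta_{1;\{1\}}\cap\Delta_{1;\emptyset}=D_{11a}\cup D_{12}$, so the four displayed elements, each recording that a shared stratum is counted on two of the divisors, visibly lie in the kernel. To see that they generate the whole kernel I would compare ranks: the source has rank $5+5+7=17$, the four relations are independent, so it suffices to show the image in $N^2(\BM_{3,1})$ has rank $13$, i.e.\ that the remaining classes are numerically independent in $\BM_{3,1}$. The \emph{main obstacle} is exactly this independence statement together with the computation of $A^1(HP)$ in (i): unlike the product cases, $HP$ is not a moduli space of curves, so one must argue through the branch-point model and its finite quotient singularities to be certain no spurious divisor classes appear, and one must control enough of the intersection theory of $\BM_{3,1}$ to exclude relations beyond the obvious four. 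I expect the independence to follow by pairing the $13$ classes against a spanning set of test curves ($F$-curves together with the moving curves already constructed), reducing (v) to a finite-rank linear algebra check.
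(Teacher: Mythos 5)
Your overall architecture for (i)--(iv) tracks the paper reasonably well: the paper also computes $N^1(\HD_{1;\{1\}})$ and $N^1(\HD_{1;\emptyset})$ from the product structure (using that the interior $\MM_{1,2}\times\MM_{2,1}$ is affine), and it also identifies $HP$ via branch points --- though more precisely as a contraction of Rulla's space $X_{9,1}=\BM_{0,9}/\frakS_8$ of admissible double covers: the fiber of $HP\to H$ over $[C]$ is the quotient $C/\iota\cong\PP^1$, not $C$, so ``universal curve'' is not quite the right model, $hw$ is not a section but the image of the boundary divisor $B_2$, and one must account for the divisor $B_3$ that gets contracted under stabilization. Also, for the sign statements in (ii)--(iv), extremality of the individual classes is not sufficient; the paper invokes the stronger fact that $w$ and $\delta_{1;\{1\}}$ span a \emph{face} of $\Eff^1(\BM_{2,1})$ (\cite[Corollary 3.3.2]{RullaThesis}) after restricting to fibers of the two projections. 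Your moving-curve separation can be upgraded to a face statement (as in the proof of Lemma~\ref{lem:i1}, using that a nonzero sum of effective classes cannot be numerically trivial), but as written ``extremality and independence'' does not yield the claim.

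The genuine gap is in (v). Your reduction to the independence of the thirteen classes is exactly the paper's reduction, but your proposed verification --- pairing the thirteen classes against a spanning set of \emph{test curves} ($F$-curves and the moving curves already constructed) --- cannot work: these are codimension-two classes on the $7$-dimensional space $\BM_{3,1}$, so they pair with $2$-dimensional cycles, and their intersection with any curve class vanishes for dimension reasons. What is actually needed, and what constitutes the bulk of the paper's proof, is a combination of two-dimensional test families and pushforward arguments: flat pushforward under $\pi:\BM_{3,1}\to\BM_3$ (which kills all classes except $hw$, $w_1$, $d_{12}$ and gives $a=0$), pushforward under the pseudostable contraction $\ps$ and under $\pi_{\ps}:\BM_{3,1}^{\ps}\to\BM_3^{\ps}$, intersection with the $\psi$-class followed by $\pi_*$, a general net of plane quartics with a marked base point, a net of quartics containing a cuspidal member together with a marked line section through a Weierstrass point of the normalization, and pencils of plane cubics attached to fixed curves. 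There is no known finite spanning set of surface classes in $N_2(\BM_{3,1})$ against which one could mechanically run a linear-algebra check; constructing the right families and degenerations is the actual content of (v), and your proposal defers precisely this step with a method that is dimensionally incorrect.
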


\begin{proof}
The first two claims essentially follow from \cite[9.2, 10.1]{RullaM0n}. A curve parameterized in $HP$ is a genus three hyperelliptic curve $C$ with a marked point $p$. Hence, it can be identified with an admissible double cover (\cite[3.G]{HarrisMorrison}) of a stable rational curve $R$ branched at eight unordered points $q_1, \ldots, q_8$ with a distinguished marked point $q$ as the image of $p$. Marking the conjugate point $p'$ of $p$ gives $(C, p')$, which is isomorphic to $(C, p)$ under the hyperelliptic involution. Therefore, the data $(R, q_1, \ldots, q_8, q)$ uniquely determines $(C, p)$. The curve $(C,p)$ is not stable if and only if $R$ has a rational tail marked by exactly two of the unordered points or by an unordered point and $q$ or has a rational bridge marked only by $q$. In the first case, the double cover has an unmarked rational bridge. In the second case, the double cover has a rational tail marked by $q$. In the last case, the double cover has a rational bridge with no marked points. In all three cases stabilization contracts the nonstable rational component. Furthermore, when $R$ has a rational tail marked by exactly two unordered points and $q$, the double cover has a rational bridge marked by  $q$. While a rational bridge marked by one point has no moduli, a rational tail with three marked points has one-dimensional moduli. 

Following the notation in \cite{RullaM0n}, let $X_{9,1} = \BM_{0,9}/\mathfrak{S_8}$ be the moduli space of stable genus zero curves with nine marked points $q_1, \ldots, q_8, q$ such that $q_1, \ldots, q_8$ are unordered but $q$ is distinguished. Let $B_k \subset X_{9,1}$ be the boundary divisor 
parameterizing curves with a rational tail marked by $q$ and $k-1$ of the $q_i$ for $2\leq k\leq 7$. There is a morphism $X_{9,1}\to HP$ defined by taking the admissible double cover and stabilizing. This morphism contracts the boundary divisor $B_3$ to the locus of hyperelliptic curves with a marked rational bridge. Numerical equivalence, rational equivalence and linear equivalence over $\bbR$ coincide for divisors on moduli spaces of stable pointed genus zero curves, and the Picard group is generated by boundary divisors. Considering the admissible double covers corresponding to the boundary divisors $B_2$, $B_4$, $B_5$, $B_6$ and $B_7$ of $X_{9,1}$, we see that their images in $HP$ correspond to $HW$, 
$HP_1$, $\Theta$, $HP_2$ and $HP_0$, respectively, hence (i) follows. The same curves proving the extremality of the boundary divisors in this case (see \cite[10.1]{RullaM0n}) also prove (ii).  

For (iii), $hw$ and $d_{ij}$ correspond to the pullbacks of generators of $N^1(\BM_{1,2})\cong A^1(\BM_{1,2})$ and of $N^1(\BM_{2,1})\cong A^1(\BM_{2,1})$, where $\HD_{1;\{1\}} = \BM_{1,2}\times \BM_{2,1}$. 
Moreover, the interior $\MM_{1,2}\times \MM_{2,1}$ is affine. 
Hence, these classes generate $N^1(\HD_{1; \{1 \}})\cong A^1(\HD_{1; \{1 \}})$. 
Suppose that $b_1 \cdot hp_1 + b_2 \cdot d_{11a} + b_{3} d_{12}$ is an effective divisor class. Let $\pi_1$ and $\pi_2$ be the projections of $\HD_{1; \{1 \}}$ to $\BM_{1,2}$ and $\BM_{2,1}$, respectively. 
Then, we have $hp_1 = \pi_{2}^{*}w$, $d_{11a} = \pi_{2}^{*} \delta_{1; \{ 1\}}$ and $d_{12} = \pi_{1}^{*} \delta_{0; \{ 1,2\}}$, where $w$ is the divisor class of Weierstrass points. Hence, $b_3 \geq 0$ and 
$b_1 \cdot w + b_2 \cdot \delta_{1; \{ 1\}}$ is effective in $\BM_{2,1}$. Since $w$ and $\delta_{1; \{ 1\}}$ span a face of $\Eff^1(\BM_{2,1})$ (\cite[Corollary 3.3.2]{RullaThesis}), we conclude that $b_1, b_2 \geq 0$. A similar argument yields (iv). 

For (v), since $HP\cap \Delta_{1;\{1\}} = HP_1$, $HP\cap \Delta_{1;\emptyset} = HP_2$ and 
$\Delta_{1;\{1\}} \cap \Delta_{1;\emptyset} = D_{12}\cup D_{11a}$, 
it suffices to prove that $hw$, $hp_0$, $hp_1$, $hp_2$, $\theta$, $w_1$, 
$d_{01a}$, $d_{01b}$, $d_{02a}$, $d_{02b}$, $d_{11a}$, $d_{11b}$ and $d_{12}$ are independent in $N^2(\BM_{3,1})$. 
Suppose that they satisfy a relation 
\begin{eqnarray}
\label{eq:hwhpdij}
a \cdot hw + t\cdot \theta + s \cdot w_1 + \sum_i b_i \cdot hp_i + \sum_{i,j,k} c_{ijk} \cdot d_{ijk} = 0 \in N^2(\BM_{3,1}) 
\end{eqnarray}
for $a, b_i, c_{ijk}, s, t \in \bbR$. 
The morphism $\pi: \BM_{3,1}\to \BM_3$ contracts these cycles except $HW$, $W_1$ and $D_{12}$, where $\pi(HW) = H$ and $\pi(W_1) = \pi(D_{12}) = \Delta_1$. 
Since $\pi$ is flat and $h, \delta_1$ are independent in $\BM_3$, applying $\pi_{*}$ to \eqref{eq:hwhpdij}
we  conclude that $a = 0$.  
 
For the remaining cycles,  
we have $\pi(HP_1) = \pi(HP_2) = H_1$, $\pi(HP_0) = \Xi_0$, $\pi(\Theta) = \Xi_1$, 
$\pi(D_{02a} ) = \pi(D_{02b}) = \Delta_{01a}$, $\pi(D_{01a}) = \pi(D_{01b}) = \Delta_{01b}$ and $\pi(D_{11a}) = \pi(D_{11b})=\Delta_{11}$. 
The images of $W_1$ and $D_{12}$ are contained in $\Delta_1$. By \cite{FaberChow}, 
the subspace spanned by $\xi_0$ and $\xi_1$ has zero intersection with $A^1(\Delta_1)$ in $N^2(\BM_{3})$. Intersect \eqref{eq:hwhpdij} with an ample divisor class and apply $\pi_{*}$. We thus conclude that $t = b_0 = 0$. 

Relation \eqref{eq:hwhpdij}  reduces to 
\begin{eqnarray}
\label{eq:hwhpdij-1}
s \cdot w_1 +  b_1 \cdot hp_1 + b_2 \cdot hp_2+ \sum_{i,j,k} c_{ijk} \cdot d_{ijk} = 0.  
\end{eqnarray}
Recall the pseudostable map $\ps: \BM_{3,1}\to \BM_{3,1}^{\ps}$. 
Applying $\ps_{*}$, we obtain that 
\begin{eqnarray}
\label{eq:hwhpdij-2}
b_1 \cdot (\ps_{*} hp_1) + c_{01a} \cdot (\ps_{*}d_{01a}) + c_{02a}\cdot (\ps_{*} d_{02a}) + c_{02b}\cdot (\ps_{*} d_{02b}) = 0 
\end{eqnarray} 
in $N^2(\BM_{3,1}^{\ps})$, since the other summands in \eqref{eq:hwhpdij-1} are contracted by $\ps$. Take a general net $S_1$ of plane quartics with a marked base point. Then $S_1$ intersects $\ps(D_{02a})$ at finitely many points parameterizing cuspidal quartics and $S_1$ does not intersect the other cycles in \eqref{eq:hwhpdij-2}, which implies that $c_{02a} = 0$. Let $\pi_{\ps}: \BM_{3,1}^{\ps} \to \BM_{3}^{\ps}$ be the morphism forgetting the marked point and pseudostablizing. Note that $e_{\pi_{\ps}}(\ps(D_{02b})) = 1$ and $e_{\pi_{\ps}}(\ps(HP_1)) = e_{\pi_{\ps}}(\ps(D_{01a})) = 2$. Take an ample divisor class in $\BM_{3,1}^{\ps}$, intersect  \eqref{eq:hwhpdij-2} and apply $\pi_{\ps*}$. We thus conclude that $c_{02b} = 0$. Finally, take a pencil of plane cubics, mark a base point, take another base point and use it to attach the pencil to a varying  point in a fixed genus two curve. We obtain a two-dimensional family $S_2$ in $\BM_{3,1}^{\ps}$ such that 
$[S_2] \cdot (\ps_{*} hp_1)= -6$ and $[S_2] \cdot (\ps_{*} d_{01a}) = 0$, which implies that $b_1 = c_{01a} = 0$.


Relation \eqref{eq:hwhpdij} further reduces to 
\begin{eqnarray}
\label{eq:hwhpdij-3}
s \cdot w_1 + b_2 \cdot hp_2+ c_{12} \cdot d_{12} + c_{11a} \cdot d_{11a} + c_{01b} \cdot d_{01b} + c_{11b} \cdot d_{11b}= 0.  
\end{eqnarray}
Take a general net 
$S_3$ of plane quartics such that $S_3$ contains finitely many general cuspidal curves. Take a line $L$ passing through a Weierstrass point 
of the normalization of one of the cuspidal quartics $C$ and make $L$ general other than that. Mark the intersection points of $L$ with curves in $S_3$, make a base change, perform stable reduction and still denote by $S_3$ the resulting family in $\BM_{3,1}$. Note that $S_3$ does not intersect the summands in \eqref{eq:hwhpdij-3} except $W_1$, and it intersects $W_1$ along a one-dimensional locus parameterizing 
the normalization of $C$ with a pencil of elliptic tails attached at the inverse image of the cusp of $C$, which implies that 
$[S_3]\cdot w_1 < 0$ since the normal bundle of elliptic tails in $\Delta_1$ has negative degree restricted to this family. We thus conclude that $s = 0$. 

Set $s=0$, intersect \eqref{eq:hwhpdij-3} with the $\psi$-class of the marked point and apply $\pi_{*}$. Since the marked point lies in the rational bridge for curves in $D_{12}$, it implies that $\psi\cdot d_{12} = 0$. We thus obtain that 
$$ (3b_2) \cdot h_1 + (c_{11a} + 2 c_{11b} ) \cdot \delta_{11} + c_{01b} \cdot \delta_{01b} = 0 $$
in $N^2(\BM_3)$. Since $h_1, \delta_{11}$ and $\delta_{01b}$ are independent in $\BM_3$ (\cite{FaberChow}), we conclude that $b_2 = c_{01b} = 0$ and $c_{11a} + 2 c_{11b} = 0$. 

Now Relation \eqref{eq:hwhpdij} reduces to  
\begin{eqnarray}
\label{eq:hwhpdij-4}
c_{12} \cdot d_{12} - 2 c_{11b} \cdot d_{11a} + c_{11b} \cdot d_{11b}= 0.  
\end{eqnarray}
Attach two pencils of plane cubics to a smooth genus one curve $E$ at two general points, and mark a third general point in $E$. We obtain a two-dimensional family $S_4$ in $\BM_{3,1}$ such that 
$[S_4] \cdot d_{12} = [S_4]\cdot d_{11a} = 0$ and $[S_4]\cdot d_{11b} = 1$. Plugging in 
\eqref{eq:hwhpdij-4}, we obtain that $c_{11b} = 0$, hence $c_{12} \cdot d_{12} = 0$ and $c_{12} = 0$. 

\end{proof}

Now we are ready to prove that $HW$ is extremal in $\BM_{3,1}$. In fact, all extremal divisors in $HP$ are extremal  as codimension two cycles in $\BM_{3,1}$. First, observe that $\pi: \BM_{3,1} \to \BM_3$ is flat of relative dimension one. As a consequence, if $Z\subset \BM_{3,1}$ is an irreducible subvariety 
of codimension two, then $\pi(Z)\subset \BM_3$ has codimension either one or two, i.e. $e_{\pi}(Z) = 0$ or $1$. 

\begin{theorem}
\label{thm:hw}
The cycle classes of $HW$, $HP_0$, $HP_1$, $HP_2$ and $\Theta$ are extremal in $\Eff^2(\BM_{3,1})$. 
\end{theorem}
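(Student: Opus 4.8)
My plan is to obtain all five extremality statements at once from Proposition~\ref{prop:union}, applied with $D_0 = HP$, $D_1 = \Delta_{1;\{1\}}$ and $D_2 = \Delta_{1;\emptyset}$, so that $Y = HP\cup\Delta_{1;\{1\}}\cup\Delta_{1;\emptyset}$. For this choice the pairwise intersections are $D_0\cap D_1 = HP_1$, $D_0\cap D_2 = HP_2$ and $D_1\cap D_2 = D_{12}\cup D_{11a}$, and Lemma~\ref{lem:hwhp} furnishes exactly the algebraic input Proposition~\ref{prop:union} requires: parts~(i),(iii),(iv) give the isomorphisms $A^1(D_i)\cong N^1(D_i)$; parts~(ii),(iii),(iv) give the extremality of each intersection class $D_{i,j,k}$ inside $D_i$ together with the sign conditions that an effective combination of these classes must have nonnegative coefficients; and part~(v) pins down the kernel of $\bigoplus_i N^1(D_i)\to N^2(\BM_{3,1})$, which is precisely the injectivity statement the proposition needs. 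Since each of $HW$, $HP_0$, $HP_1$, $HP_2$ and $\Theta$ is an effective divisor of $D_0 = HP$ that is extremal in $\Eff^1(HP)$ by Lemma~\ref{lem:hwhp}(ii), Proposition~\ref{prop:union} will yield extremality in $\Eff^2(\BM_{3,1})$ as soon as its containment hypothesis is verified.

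The one hypothesis that is not immediate is the containment: for each target class $[Z]$ and each effective expression $[Z]=\sum_i a_i[Z_i]$ with $a_i>0$ and $Z_i$ irreducible of codimension two, every $Z_i$ must lie in $Y$. Here the organizing tool is the flatness of $\pi:\BM_{3,1}\to\BM_3$, which forces $e_\pi(Z_i)\in\{0,1\}$; I would split each $Z_i$ into a horizontal type ($e_\pi(Z_i)=0$, generically finite over a divisor of $\BM_3$) and a vertical type ($e_\pi(Z_i)=1$, a component of $\pi^{-1}(V_i)$ for $V_i=\pi(Z_i)$ of codimension two). Pushing the relation forward by $\pi$ kills the vertical terms and sends the horizontal ones to a nonnegative combination of irreducible divisors of $\BM_3$. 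For $Z=HW$ this reads $8[H]=\sum a_i\,\pi_*[Z_i]$, and the extremality and rigidity of the hyperelliptic divisor $H$ in $\Eff^1(\BM_3)$ then force each horizontal image to be $H$, so those $Z_i$ lie in $\pi^{-1}(H)=HP\subset Y$. For the four boundary classes I would instead feed the composite $g=\tau\circ\pi:\BM_{3,1}\to\calA^{\sat}_3$ into Proposition~\ref{prop:main-tool}: one computes $e_g(HP_0)=2$, $e_g(HP_1)=e_g(HP_2)=1$ and $e_g(\Theta)=3$, and, using that $\tau$ drops dimension only along $\Delta$, by $2$ along $\Delta_0$ and by $1$ along $\Delta_1$, the resulting bounds on $\dim g(Z_i)$ constrain the horizontal images to $\Delta_0$ or $\Delta_1$ and so place the corresponding $Z_i$ in $\pi^{-1}(\Delta)$.

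The hard part, which I expect to be the main obstacle, is to sharpen these conclusions to genuine containment in $Y$, and it has two aspects. First, representatives supported over $\Delta_0$ must be shown to be hyperelliptic, i.e. to lie in $HP$ rather than in an arbitrary component of $\pi^{-1}(\Delta_0)$ (note $\pi^{-1}(\Delta_0)\not\subset Y$); I would establish this by refining the pushforward analysis with the pseudostable contraction $\ps$ and with $\psi$-weighted pushforwards and test families, in the same style as the independence computations in the proof of Lemma~\ref{lem:hwhp}(v). Second, and most delicate, are the vertical components $\pi^{-1}(V_i)$: since they are annihilated by $\pi_*$ they escape the argument above, and for $Z=HW$ in particular no morphism contracts $HW$ (one checks $e_\pi(HW)=0$ and $e_{\tau\circ\pi}(HW)=0$), so Proposition~\ref{prop:main-tool} offers no direct leverage on $HW$ itself. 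To exclude verticals I would pass to the $\psi$-weighted pushforward $\pi_*(\psi\cdot-)$, under which a vertical class maps to $4[V_i]$, and combine the resulting effective relation in $N^2(\BM_3)$ with the extremality of the relevant codimension two cycles of $\BM_3$ from Theorem~\ref{thm:eff3} to force every $V_i$ into $H$ or $\Delta_1$, hence every vertical $Z_i$ into $Y$. Once the containment is proved in all five cases, Proposition~\ref{prop:union} completes the argument.
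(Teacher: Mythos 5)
Your overall skeleton is the same as the paper's: Proposition~\ref{prop:union} applied to $Y = HP\cup\Delta_{1;\{1\}}\cup\Delta_{1;\emptyset}$ with Lemma~\ref{lem:hwhp} supplying the algebraic hypotheses, the horizontal/vertical split via flatness of $\pi$, and, for $HW$, the pushforward $8h=\sum a_i\pi_*[Y_i]$ together with extremality and rigidity of $H$ in $\Eff^1(\BM_3)$ to force the horizontal components into $HP$. (For the four boundary classes the paper is in fact simpler than your route through $\tau\circ\pi$: since $e_\pi$ equals $1$ for each of them, Proposition~\ref{prop:main-tool} applied to $\pi$ itself shows that \emph{no} horizontal terms can occur at all, so there is nothing to constrain to $\pi^{-1}(\Delta)$.)

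The genuine gap is in the step you yourself flag as the main obstacle: showing that the remaining components lie in $Y$. The paper's key ingredient, which your proposal lacks, is an explicit test surface: a general net $S$ of plane quartics with a marked base point, chosen to pass through a general point of a putative bad component $Z_1\not\subset Y$. Any such $Z_1$ has general point a marked smooth or irreducible nodal non-hyperelliptic curve, hence a marked plane quartic, so such a net exists; then $[S]\cdot hw=0$, $[S]\cdot[Z_j]\ge 0$ for $j\ne 1$, and $[S]\cdot[Z_1]>0$, contradicting the relation. This handles horizontal and vertical bad components in one stroke (and is reused verbatim for $HP_0$, $HP_1$, $HP_2$, $\Theta$). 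Your proposed substitute for the vertical components does not work: from $\pi_*(\psi\cdot hw)=\sum_i a_i\,\pi_*(\psi\cdot[Z_i])$ you cannot invoke Theorem~\ref{thm:eff3} to force $V_i\subset H\cup\Delta_1$, because extremality of the particular classes $\delta_{00},\dots,\xi_0,h_1$ places no constraint on effective decompositions of a \emph{different} class: $\pi_*(\psi\cdot hw)$ is an effective class supported on $H$ that is not among those classes and is not known to span an extremal ray (nor is $\Eff^2(\BM_3)$ known to be polyhedral). Moreover ``supported on $H$'' is not a numerical condition, so a term $c_i[V_i]$ with $V_i\not\subset H\cup\Delta_1$ cannot be excluded by comparing supports; one needs a dual object (a test surface) meeting $V_i$ positively and everything else nonnegatively, which is exactly what the paper constructs. (A minor further inaccuracy: a vertical component need not be the full preimage $\pi^{-1}(V_i)$ when fibers are reducible, so $\pi_*(\psi\cdot[Z_i])=c_i[V_i]$ with $c_i>0$ but not necessarily $c_i=4$.) Similarly, your plan for components lying over $\Delta_0$ (``refining the pushforward analysis\dots in the same style as Lemma~\ref{lem:hwhp}(v)'') is only a gesture; the independence computations there do not by themselves rule out effective representatives inside $\pi^{-1}(\Delta_0)\setminus Y$. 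Without the quartic-net construction or an equivalent concrete argument, the containment hypothesis of Proposition~\ref{prop:union} remains unverified, and the proof is incomplete.
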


\begin{proof}
We prove it for $HW$ first. Suppose that 
\begin{eqnarray}
\label{eqn:hw}
hw= \sum_{i=1}^r a_i [Y_i] + \sum_{j=1}^s b_j [Z_j] \in N^2(\BM_{3,1})
\end{eqnarray}
where $a_i, b_j > 0$ and $Y_i, Z_j\subset \BM_{3,1}$ are irreducible subvarieties of codimension two such that $e_\pi (Y_i) = 0$ and $e_{\pi} (Z_j) = 1$. 
Since $\pi_{*}[Z_j] = 0$, we have 
$$ 8 \cdot h = \sum_{i=1}^r a_i \cdot \pi_{*}[Y_i] \in N^1(\BM_3). $$
Since $H$ is an extremal and rigid divisor in $\BM_3$ (see e.g. \cite[2.2]{RullaThesis}), it follows that $\pi(Y_i) = H$. Hence, $Y_i\subset \pi^{-1}(H) = HP$ for all $i$.

Next, we show that $Z_j$ is contained in $HP\cup \Delta_{1;\{1\}}\cup \Delta_{1;\emptyset}$ for all $j$. Let $S$ be a general net of plane quartic curves with a marked base point. By taking $S$ general enough, we can assume that $S$ avoids any phenomenon of codimension three or higher. In particular, we may assume that $S$ does not contain any reducible or nonreduced elements and $S$ has finitely many cuspidal elements such that the base point is not any of the cusps. 
Furthermore, we may specify $S$ to contain a specific smooth or non-hyperelliptic irreducible nodal quartic in $Z_1$ while preserving these properties.  Then $S$ induces a surface in $\BM_{3,1}$ such that $[S] \cdot hw =0$, $[S] \cdot [Y_i] \geq 0$, $[S]\cdot [Z_j] \geq 0$ for $j\neq 1$ and 
$[S]\cdot [Z_1] > 0$. Intersecting both sides of \eqref{eqn:hw} with $S$ leads to a contradiction. We thus conclude that $Y_i, Z_j \subset HP\cup \Delta_{1;\{1\}}\cup \Delta_{1;\emptyset}$ for all $i, j$. By Lemma~\ref{lem:hwhp}, we can apply Proposition~\ref{prop:union} to $Y = HP \cup \Delta_{1;\{1\}}\cup \Delta_{1;\emptyset}$, thus proving that $hw$ is extremal in $\Eff^2(\BM_{3,1})$.

For $HP_0$, $HP_1$, $HP_2$ and $\Theta$, note that their $e_{\pi}$ indices are all equal to one and their images under $\pi$ are contained in $H$. If any of them satisfies a relation like~\eqref{eqn:hw}, 
by Proposition~\ref{prop:main-tool} the $Y_i$ terms cannot exist in the effective expression. Then the same argument in the previous paragraph implies that 
the remaining terms $Z_j$ are contained in $ HP\cup \Delta_{1;\{1\}}\cup \Delta_{1;\emptyset}$ for all $j$. Hence, we can apply Lemma~\ref{lem:hwhp} and Proposition~\ref{prop:union}  to conclude that they are extremal in $\Eff^2(\BM_{3,1})$. 
\end{proof}

\section{The effective cones of $\BM_{g}$ for $g\geq 4$}
\label{sec:genus-g}

In this section, we study effective cycles of codimension two on $\BM_g$ for $g\geq 4$. The method is similar to the case $g=3$. 
There is a topological stratification of $\BM_g$, where the strata are indexed by dual graphs of stable nodal curves. The codimension two boundary strata consist of curves with at least two nodes. We recall the notation introduced in \cite{EdidinChow} for these strata: 
\begin{itemize}
\item Let $\Delta_{00}$ be the closure of the locus in $\BM_g$ parameterizing irreducible curves with two nodes. 
\item For $1\leq i\leq j\leq g-2$ and $i+j\leq g-1$, let $\Delta_{ij}$ be the closure of the locus in $\BM_g$ parameterizing a chain of three curves of genus $i$, $g-i-j$ and $j$, respectively. 
\item For $1\leq j\leq g-1$, let $\Delta_{0j}$ be the closure of the locus in $\BM_g$ parameterizing a union of a genus $j$ curve and an irreducible nodal curve of geometric genus $g-1-j$, attached at one point. 
\item For $1\leq i \leq [(g-1)/2]$, let $\Theta_i$ be the closure of the locus in $\BM_g$ parameterizing a union of a curve of genus $i$ and a curve of genus $g-i-1$, attached at two points. 
\end{itemize}

In order to study higher codimension cycles on $\BM_g$, we need to understand the divisor theory of the boundary components. 
Recall the gluing morphisms $\alpha_{i}: \HD_i \to \Delta_i$ for $0\leq i\leq [g/2]$. By \cite[p. 69]{FaberCodim} and \cite[Section 4]{EdidinChow}, $A^1(\Delta) \to N^2(\BM_g)$ is injective. Moreover, 
$(\iota\circ \alpha_i)_{*}: N^1(\HD_i)\cong A^1(\HD_i) \to N^2(\BM_g)$
is injective, where $\iota: \Delta \hookrightarrow \BM_g$ is the inclusion. 
As before, we denote the class of a locus in $\Delta_i$ and in $\HD_i$ by the same symbol. Whenever we use $g/2$ as an index, the corresponding term exists if and only if $g$ is even.  

\begin{lemma}
\label{lem:delta1} 
\begin{enumerate}[label={\upshape(\roman*)}]
\item The classes $\delta_{11}$ and $\delta_{1g-2}$ are extremal in $\Eff^1(\HD_1)$. 

\item The class $\delta_{0g-1}$ is extremal in $\Eff^1(\HD_0)$.
\end{enumerate}
\end{lemma}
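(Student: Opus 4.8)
The plan is to identify each of these three classes, via the gluing morphisms, with (the pullback of) an irreducible boundary divisor on a smaller moduli space, and then to invoke Proposition~\ref{prop:boundary-extremal} together with Corollary~\ref{cor:prod}.

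For (i), recall $\HD_1 = \BM_{1,1}\times \BM_{g-1,1}$, where $\alpha_1$ attaches the marked point of the elliptic-tail factor $\BM_{1,1}$ to the marked point of the $\BM_{g-1,1}$-factor. I would first trace the two chain strata through this description. A general point of $\Delta_{11}$ is a chain of genus $1,g-2,1$; splitting off one elliptic end as the first factor leaves a curve in $\BM_{g-1,1}$ whose marked point (the attaching node) lies on the genus $g-2$ component, i.e. a general point of the boundary divisor $\Delta_{1;\emptyset}$. Hence $\delta_{11} = \mathrm{pr}_2^{*}\,\delta_{1;\emptyset}$, the pullback of $\delta_{1;\emptyset}$ under the projection $\mathrm{pr}_2\colon \HD_1\to \BM_{g-1,1}$. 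Similarly, a general point of $\Delta_{1g-2}$ is a chain of genus $1,1,g-2$, and splitting off the genus-one end places the marked point on the remaining genus-one component, so $\delta_{1g-2} = \mathrm{pr}_2^{*}\,\delta_{1;\{1\}}$. Since $\Delta_{1;\emptyset}$ and $\Delta_{1;\{1\}}$ are irreducible boundary divisors of $\BM_{g-1,1}$, they are extremal in $\Eff^1(\BM_{g-1,1})$ by Proposition~\ref{prop:boundary-extremal}. As $\delta_{11}$ and $\delta_{1g-2}$ are the classes of the products $\BM_{1,1}\times\Delta_{1;\emptyset}$ and $\BM_{1,1}\times\Delta_{1;\{1\}}$, Corollary~\ref{cor:prod} (applied after reordering the two factors) shows they are extremal in $\Eff^1(\HD_1)$.

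For (ii), recall $\HD_0 = \BM_{g-1,2}/\mathfrak{S}_2 = \TMM_{g-1,2}$, where $\alpha_0$ glues the two unordered marked points. A general point of $\Delta_{0g-1}$ is a smooth genus $g-1$ curve meeting a rational nodal curve at one point. I would identify its preimage by normalizing the non-separating (self-)node: this exhibits the source as a genus $g-1$ component carrying a rational tail that meets it at one point and carries both marked points, with $\alpha_0$ recreating the self-node of the tail. Thus $\delta_{0g-1}$ is the class of the boundary divisor $\TD_{0;2}$ of $\TMM_{g-1,2}$, which is extremal in $\Eff^1(\HD_0)$ directly by Proposition~\ref{prop:boundary-extremal}.

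The only real content is the dictionary between the codimension-two boundary strata of $\BM_g$ and the boundary divisors of $\HD_0$ and $\HD_1$; the main thing to get right is the location of the marked point (the attaching node) on the stabilized factor, which distinguishes $\Delta_{1;\emptyset}$ from $\Delta_{1;\{1\}}$ and hence $\delta_{11}$ from $\delta_{1g-2}$. Once these identifications are in place, extremality is immediate from the cited results, with no further computation required.
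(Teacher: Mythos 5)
Your proof is correct and takes essentially the same route as the paper: identify $\delta_{11}$ and $\delta_{1g-2}$ with pullbacks of irreducible boundary divisor classes of $\BM_{g-1,1}$ under the second projection (the paper labels your $\delta_{1;\{1\}}$ as $\delta_{g-2;\emptyset}$, which is the same divisor under the complementary labeling convention), identify $\delta_{0g-1}$ with the boundary class $\delta_{0;2}$ on $\HD_0=\TMM_{g-1,2}$, and then apply Proposition~\ref{prop:boundary-extremal} together with Corollary~\ref{cor:prod}. The care you take in locating the marked point (attaching node) on the stabilized factor is precisely the dictionary the paper uses but leaves implicit.
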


\begin{proof}
Note that $\HD_1 = \BM_{1,1}\times \BM_{g-1, 1}$. The classes $\delta_{11}$ and $\delta_{1g-2}$ are the pullbacks of 
boundary classes $\delta_{1;\empty}$ and $\delta_{g-2;\empty}$ from $\BM_{g-1,1}$, respectively. Now (i) follows from 
Corollary~\ref{cor:prod} and Proposition~\ref{prop:boundary-extremal}. 


For (ii), $\delta_{0g-1}$ corresponds to the boundary class $\delta_{0; 2}$ in $\HD_0 = \TMM_{g-1,2}$. Hence, the claim follows from 
Proposition~\ref{prop:boundary-extremal}. 
\end{proof}

\begin{lemma}
\label{lem:i1} 
\begin{enumerate}[label={\upshape(\roman*)}]
\item The linear combination 
$ a_0 \delta_0 + \sum_{i=1}^{g-1}a_i \delta_{i,1} $ is  effective on $\BM_{g,1}$ if and only if 
all the coefficients are nonnegative. 

\item For $0 < i < g/2$, the linear combination $$\sum_{k=0}^{i-1} (c_{ki}  \delta_{ki} + d_{k g-i} \delta_{k g-i})
+ \sum_{k=i+1}^{[g/2]} ( c_{ik} \delta_{ik} + d_{i g-k} \delta_{i g-k}) $$ 
is  effective on $\HD_i$ if and only if all the coefficients are nonnegative. 

\item For $g$ even and $i = g/2$, the linear combination 
$$ \sum_{k=0}^{g/2-1} b_{k g/2} \delta_{k g/2}$$ 
is effective on $\HD_{g/2}$ if and only if all the coefficients are nonnegative. 
\end{enumerate}
\end{lemma}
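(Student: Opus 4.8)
The two implications are of quite different natures. The ``if'' direction is immediate, since a nonnegative combination of effective boundary divisors is visibly effective. The content is the ``only if'' direction, which asserts that the boundary divisors appearing in each statement span a simplicial face of the effective cone. The plan is to first establish the following core assertion for a single factor: \emph{the boundary divisors of $\BM_{h,1}$ are linearly independent, and any effective $\bbR$-combination of them has nonnegative coefficients.} Part (i) is exactly this statement for $h=g$, while parts (ii) and (iii) are deduced from it by passing to the two factors of $\HD_i=\BM_{i,1}\times\BM_{g-i,1}$, respectively to the two factors of the finite double cover $\BM_{g/2,1}\times\BM_{g/2,1}\to\HD_{g/2}$.

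To prove the core assertion I would argue as follows, assuming $\dim\BM_{h,1}=d\geq 2$ (the case $\BM_{1,1}$ being immediate since its effective cone has rank one). Linear independence of the boundary classes is standard. For each boundary divisor $\delta_k$, Proposition~\ref{prop:boundary-extremal} supplies a covering (moving) curve $\gamma_k\subset\delta_k$ with $\gamma_k\cdot\delta_k<0$; since the general member of each such family is a nodal curve whose only degeneration is the one prescribed by $\delta_k$, it lies in no other boundary divisor, so $\gamma_k\cdot\delta_j=0$ for $j\neq k$. Now suppose $\sum_i a_i\delta_i=[E]$ for an effective divisor $E$, and write $E=\sum_i m_i\delta_i+F$, where $m_i\geq 0$ is the multiplicity of $\delta_i$ in $E$ and $F$ is effective with no boundary component. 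Setting $c_i=a_i-m_i$ gives $[F]=\sum_i c_i\delta_i$. Because $F$ contains no $\delta_k$ and $\gamma_k$ covers $\delta_k$, a general member satisfies $\gamma_k\cdot F\geq 0$; together with $\gamma_k\cdot\delta_j=0$ for $j\neq k$ and $\gamma_k\cdot\delta_k<0$ this yields $c_k\leq 0$ for every $k$. To upgrade this to $c_k=0$, fix an ample class $A$: the curve class $A^{d-1}$ has positive intersection with every nonzero effective divisor, so $A^{d-1}\cdot F>0$ if $F\neq 0$, whereas $A^{d-1}\cdot F=\sum_i c_i\,(A^{d-1}\cdot\delta_i)\leq 0$ since each $c_i\leq 0$ and each $A^{d-1}\cdot\delta_i\geq 0$. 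Hence $F=0$, so $\sum_i c_i\delta_i=0$, and linear independence forces $c_i=0$ and $a_i=m_i\geq 0$, proving (i).

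For (ii), the divisors $\delta_{ki},\delta_{k,g-i}$ are precisely the pullbacks to $\HD_i=\BM_{i,1}\times\BM_{g-i,1}$ of the boundary divisors of the two factors under the projections $\pi_1,\pi_2$, grouped into two families. Given an effective combination, restricting its class to a general fiber $\{x\}\times\BM_{g-i,1}$ of $\pi_1$ kills every $\pi_1^{*}$-class and exhibits the remaining coefficients as those of an effective divisor on $\BM_{g-i,1}$; by the core assertion they are nonnegative, and symmetrically for $\pi_2$. This is the product analogue of Corollary~\ref{cor:prod}. For (iii), I would pull the combination back along the finite double cover $\BM_{g/2,1}\times\BM_{g/2,1}\to\HD_{g/2}$: pullback sends effective classes to effective classes and sends each $\delta_{k,g/2}$ to a boundary combination on the product, so the product case of (ii) gives nonnegativity of the coefficients, which descends because the cover is finite.

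The main obstacle is the verification, hidden in the core assertion, that the covering curves $\gamma_k$ can indeed be chosen with $\gamma_k\cdot\delta_j=0$ for $j\neq k$; more generally one needs the intersection matrix $(\gamma_k\cdot\delta_j)$ to have negative diagonal and nonnegative off-diagonal entries with nonnegative inverse, so that the inequalities $\gamma_k\cdot F\geq 0$ still force every $c_k\leq 0$. Once the sign structure of this matrix is controlled, the ample-power step closes the argument cleanly. A secondary bookkeeping point is to match the combinatorial labels $\delta_{ki},\delta_{k,g-i}$ in (ii) correctly with the boundary divisors of the two factors and to confirm that the listed classes exhaust the boundary of $\HD_i$.
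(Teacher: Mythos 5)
Your overall strategy coincides with the paper's: part (i) is proved by intersecting with moving curves inside each boundary divisor and invoking linear independence of boundary classes, and parts (ii)--(iii) are reduced to (i) through the product structure of $\HD_i$ (the paper phrases this as the listed classes being pullbacks of boundary classes from the two factors; your fiber-restriction and double-cover arguments are correct implementations of the same reduction). Your cone bookkeeping --- writing $E=\sum_i m_i\delta_i+F$ with $F$ containing no boundary component, then the ample-power step forcing $F=0$ --- is fine, and in fact tidier than the paper's two-case treatment.

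The gap is exactly the step you flag as the ``main obstacle,'' and it is not a technicality: the assertion that the moving curves of Proposition~\ref{prop:boundary-extremal} satisfy $\gamma_k\cdot\delta_j=0$ for all $j\neq k$ is false, and the justification offered (``the general member lies in no other boundary divisor'') is invalid, since a one-parameter family can meet other boundary divisors at special members through bubbling. Concretely, on $\BM_{g,1}$ the divisor $\delta_{g-1,1}$ (marked genus $g-1$ component attached to an unmarked elliptic tail) admits no sliding family of the kind you need: moving the attachment point along the elliptic component gives self-intersection $2-2\cdot 1=0$, not negative, so one is forced instead to vary the marked point on the genus $g-1$ component; when that point collides with the node, a rational bubble forms, and the resulting family $C_{g-1}$ has $[C_{g-1}]\cdot\delta_{g-1,1}=4-2g<0$ but $[C_{g-1}]\cdot\delta_{1,1}=1>0$. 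The paper's proof consists precisely of constructing these test curves and exploiting the resulting triangular sign structure: the curves $C_0$ and $C_i$ for $1\leq i\leq g-2$ (sliding the attachment point along the unmarked genus $g-i\geq 2$ component) do have vanishing off-diagonal intersections, giving $a_0,a_1,\dots,a_{g-2}\leq 0$, and then the inequality $(4-2g)a_{g-1}+a_1\geq 0$ combined with $a_1\leq 0$ yields $a_{g-1}\leq 0$. This is exactly the ``negative diagonal, nonnegative off-diagonal, resolvable system'' condition you postulate but do not verify; since that verification is the entire content of part (i), the proposal as written is incomplete without it.
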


\begin{proof}
For (i), take a nodal curve of geometric genus $g-1$ and vary a marked point on it. 
We obtain a curve $C_0$ moving in $\Delta_0$ such that $[C_0]\cdot \delta_0 < 0$ and $[C_0]\cdot \delta_{i,1} = 0$ for all $i$. 
For $1\leq i \leq g-2$, sliding a genus $i$  curve with a marked point along a genus $g-i$ curve, we obtain a curve 
$C_i$ moving in $\Delta_{i,1}$ such that $[C_i] \cdot \delta_{i,1} < 0$, 
$[C_i]\cdot\delta_0 = 0$ and $[C_i] \cdot \delta_{j,1} = 0$ for $j\neq i$. 
Finally, attach a genus $g-1$ curve with a marked point to a genus one curve at a general point and vary the marked point in the component of genus $g-1$. We obtain a 
curve $C_{g-1}$ moving in $\Delta_{g-1,1}$ such that $[C_{g-1}]\cdot \delta_0 = 0$, $[C_{g-1}]\cdot \delta_{1,1} = 1$, 
$[C_{g-1}]\cdot \delta_{g-1,1} = 4-2g < 0$ and $[C_{g-1}]\cdot \delta_{i,1} = 0$ for $2\leq i \leq g-2$. 

Suppose that $D$ is an effective divisor on $\BM_{g,1}$ with class 
$[D] = a_0 \delta_0 + \sum_{i=1}^{g-1}a_i\delta_{i,1}$. 
If the support of $D$ is contained in the union of $\Delta_0$ and the $\Delta_{i,1}$, we are done, because the boundary divisor classes are linearly independent. Suppose that $D$ does not contain any boundary components in its support. Since the curves constructed are moving in the respective boundary components, we have $[D]\cdot [C_i] \geq 0 $ for $0\leq i \leq g-1$.  
We thus conclude that $ a_0 \leq 0$, $a_i \leq 0$ for $1\leq i \leq g-2$ and $(4-2g)a_{g-1} + a_1 \geq 0$, hence 
$a_{g-1}\leq 0$. It follows that 
$[D] + (-a_0)\delta_0 + \sum_{i=1}^{g-1} (-a_i)\delta_{i,1} = 0$, leading to a contradiction, because the class of a positive sum of effective divisors cannot be zero. Indeed we have proved that an effective divisor with class $a_0 \delta_0 + \sum_{i=1}^{g-1}a_i \delta_{i,1}$ has to be supported in the union of 
$\Delta_0$ and the $\Delta_{i,1}$. 

For (ii) and (iii), the boundary classes in the linear combination are the pullbacks of the boundary classes from  $\BM_{i, 1}$ and $\BM_{g-i,1}$, respectively, hence the claims follow from (i). 
\end{proof}

\begin{theorem}
\label{thm:eff2mg}
For $g\geq 2$, every codimension two boundary stratum of $\BM_g$ is extremal in $\Eff^2(\BM_g)$. 
\end{theorem}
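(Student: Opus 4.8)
The plan is to dispose of $g=2$ and $g=3$ first (for $g=2$ the space $\BM_2$ is three-dimensional, so a codimension two cycle is a curve and the assertion is classical, while $g=3$ is Theorem~\ref{thm:eff3}), and then to treat the range $g\geq 4$ uniformly by the method of the genus three case. For $g\geq 4$ the codimension two boundary strata are $\Delta_{00}$, the chains $\Delta_{ij}$ (including the self-intersection chains $\Delta_{ii}$), the strata $\Delta_{0j}$, and the banana strata $\Theta_i$. The overarching idea is to realize each such stratum $S$ as a boundary divisor of one of the glued spaces $\HD_i$, where its extremality is governed by Proposition~\ref{prop:boundary-extremal}, and then to promote that extremality from $\HD_i$ up to $\BM_g$ by combining the gluing morphisms with the contraction criteria of Section~\ref{sec:prelim}.

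First I would carry out the divisorial bookkeeping. Via the normalization $\alpha_i\colon\HD_i\to\Delta_i$ of a boundary divisor containing $S$, each stratum is a boundary divisor of $\HD_i$: the strata $\Delta_{00}$ and $\Theta_i$ pull back to boundary divisors $\tilde\delta_0$ and $\tilde\delta_{i;1}$ of $\HD_0=\TMM_{g-1,2}$, whereas $\Delta_{ij}$, $\Delta_{0j}$ and $\Delta_{ii}$ pull back to boundary divisors of the form $\pi^{*}\delta$ on a product $\HD_i=\BM_{i,1}\times\BM_{g-i,1}$. In each case the class is either a boundary class of $\TMM_{g-1,2}$ or the pullback of a boundary class along a projection, hence extremal in $\Eff^1(\HD_i)$ by Proposition~\ref{prop:boundary-extremal} together with Corollary~\ref{cor:prod}. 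Lemma~\ref{lem:i1} records the complementary positivity statement that on each $\HD_i$ a combination of these intersection classes is effective if and only if all of its coefficients are nonnegative.

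To promote extremality to $\BM_g$ I would use the extended Torelli map $\tau\colon\BM_g\to\calA^{\sat}_g$, whose exceptional locus is the entire boundary $\Delta$. The essential numerical point is that every codimension two boundary stratum drops dimension under $\tau$, that is $e_\tau(S)\geq 1$: a separating configuration forgets its node-attaching points, while a non-separating node lowers the dimension of the Jacobian. Consequently, for any effective expression $[S]=\sum a_i[Z_i]$, Proposition~\ref{prop:main-tool} applied with $m=\dim S-1$ forces $e_\tau(Z_i)\geq 1$, so each $Z_i$ is contained in $\Delta$. This verifies the standing hypothesis needed to invoke Proposition~\ref{prop:union} with $Y=\Delta=\bigcup_i\Delta_i$: the pairwise intersections $\Delta_i\cap\Delta_j$ are exactly the codimension two strata already shown extremal in each $\HD_i$, Lemma~\ref{lem:i1} supplies the required sign condition, and the injectivity of $A^1(\Delta)\to N^2(\BM_g)$ is known. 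Proposition~\ref{prop:union} then upgrades the divisorial extremality of $S$ in $\HD_i$ to extremality of $S$ in $\Eff^2(\BM_g)$; the strata lying in the self-intersection of a single $\Delta_i$, namely $\Delta_{00}$, $\Theta_i$ and $\Delta_{ii}$, fall under the case of Proposition~\ref{prop:union} in which the distinguished divisor $Z$ is not proportional to any intersection class. For the strata carrying a smooth elliptic tail, such as $\Delta_{01}$ and the chains $\Delta_{1j}$, one can instead use the first divisorial contraction $\ps$, whose exceptional locus is precisely $\Delta_1$ and for which $e_{\ps}(S)>0$, and apply Proposition~\ref{prop:subvariety} directly, exactly as in the genus three proof.

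I expect the main obstacle to be that $\tau$ contracts the \emph{whole} boundary rather than a single divisor, so on its own it only confines the pieces $Z_i$ to $\Delta$ and not to the particular divisors $\Delta_i$ containing $S$. Descending through the stratification of $\Delta$, by tracking how the classes coming from the various $N^1(\HD_i)$ glue inside $N^2(\BM_g)$ and by eliminating the unwanted components through the positivity constraints of Lemma~\ref{lem:i1}, is exactly the work performed by Proposition~\ref{prop:union}; verifying its hypotheses in full generality is therefore the technical heart of the argument. The most delicate bookkeeping is the enumeration of all irreducible components of each intersection $\Delta_i\cap\Delta_j$ (besides $\Delta_{ij}$ a nested chain such as $\Delta_{i,g-j}$ can occur) and the check of the sign statement of Lemma~\ref{lem:i1} for every one of them.
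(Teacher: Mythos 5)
Your proposal is correct in substance and reaches the theorem with the paper's own toolkit, but it distributes the work quite differently. The paper's main engine for $g\geq 4$ is Proposition~\ref{prop:extremal}, not Proposition~\ref{prop:union}: it observes that any stratum whose components all have geometric genus at least two has $e_\tau = 4$, a stratum with exactly one genus-one component has $e_\tau = 3$, and conversely any codimension two subvariety with $e_\tau\geq 3$ is itself a codimension two boundary stratum; since the codimension two boundary classes are linearly independent by \cite[Theorem 4.1]{EdidinChow}, Proposition~\ref{prop:extremal} settles all such strata at once, with no positivity lemmas and no analysis of the intersections $\Delta_i\cap\Delta_j$. Only three exceptional strata remain: $\Delta_{11}$ and $\Delta_{1g-2}$ (with $e_\tau=2$) are handled by $\ps$ and Proposition~\ref{prop:subvariety}, exactly as you do for the elliptic-tail strata, and $\Delta_{0g-1}$ (with $e_\tau=1$) by Proposition~\ref{prop:union}. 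You instead use only the weak bound $e_\tau\geq 1$ to confine decompositions to $\Delta$ and then run Proposition~\ref{prop:union} with $Y=\Delta$ for essentially every stratum. This is workable---your identification of each stratum as an extremal boundary divisor of the relevant $\HD_i$ is correct, and the case structure of Proposition~\ref{prop:union} accommodates both the strata that are components of some $\Delta_i\cap\Delta_j$ and those in the self-intersection of a single $\Delta_i$---but it is markedly heavier, and it makes one hypothesis load-bearing everywhere that you state too casually: Lemma~\ref{lem:i1} gives the ``effective iff nonnegative coefficients'' statement only on $\HD_i$ for $i\geq 1$ (and on $\BM_{g,1}$), whereas Proposition~\ref{prop:union} also requires it on $\HD_0=\TMM_{g-1,2}$ for the classes $\delta_{0j},\delta_{0g-j}$; this must be proved separately by the same moving-curve technique. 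The paper's lone application of Proposition~\ref{prop:union} quietly needs that statement too, but in your scheme it underpins every stratum. In short, the paper buys brevity by exploiting the full strength of the drop $e_\tau\geq 3$ plus Edidin's independence theorem, while your route buys uniformity at the price of verifying the full Proposition~\ref{prop:union} package on all boundary divisors.
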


\begin{proof}
When $g=2$, the two $F$-curves $\Delta_{00}$ and $\Delta_{01}$ are dual to the two nef divisors $\lambda$ and $12 \lambda - \delta$, respectively, and form the extremal rays of the Mori cone of curves. The case $g=3$ is covered by Theorem~\ref{thm:eff3}. Hence,  we may assume that $g\geq 4$. 

Recall that the extended Torelli map $\tau: \BM_g\to \calA^{\sat}_g$ sends a stable curve to the product of the Jacobians of the irreducible components of its normalization. Suppose that $Z$ is a codimension two boundary stratum of $\BM_g$ such that a general curve $C$ parameterized by $Z$ does not have an irreducible component of geometric genus less than two. This assumption ensures that selecting two points (as the inverse image of a node) in the normalization of $C$ has two-dimensional moduli. We thus conclude that 
$$e_\tau(Z) = \dim Z - \dim \tau(Z) = 2 + 2 = 4, $$
because $\tau (C_1) = \tau (C_2)$ if $C_1$ and $C_2$ have the same normalization. If a general curve $C$ parameterized by $Z$ 
contains exactly one irreducible component $E$ of geometric genus one and no other components of geometric genus less than two, then $e_\tau(Z) = 3$ since choosing $n$ points in the normalization of $E$ has $(n-1)$-dimensional moduli. 

Conversely, if $Z'\subset \BM_g$ is a subvariety of codimension two such that 
$e_\tau(Z') \geq 3$, then every curve parameterized by $Z'$ has at least two nodes. Therefore, $Z'$ has to be one of the codimension two boundary strata. Since the classes of the codimension two boundary strata of $\BM_g$ are independent (\cite[Theorem 4.1]{EdidinChow}), we conclude that $[Z]$ is extremal in $\Eff^2(\BM_g)$ by Proposition~\ref{prop:extremal}. 

The remaining boundary strata are: $\Delta_{11}$ whose general point parameterizes a chain of three curves of genera $1$, $g-2$ and $1$; $\Delta_{1g-2}$ whose general point parameterizes a chain of three curves of genera $1$, $1$ and $g-2$; $\Delta_{0g-1}$ whose general point parameterizes a curve with a rational nodal tail. They are all contained in the boundary divisor $\Delta_1$. 

Recall that $\ps: \BM_g \to \BM_g^{\ps}$ contracts the boundary divisor $\Delta_1$ by replacing elliptic tails by cusps. A subvariety of $\BM_g$ contracted by $\ps$ has to be contained in $\Delta_1$. Note that 
$e_{\ps}(\Delta_{11}), e_{\ps}(\Delta_{1g-2}) > 0$, and by Lemma~\ref{lem:delta1} (i), $\delta_{11}$ and $\delta_{1g-2}$ are extremal in $\Eff^1(\HD_1)$. Moreover, $N^1(\HD_1) \cong A^1(\HD_1)\to N^2(\BM_g)$ is injective. Therefore, we conclude that $\delta_{11}$ and $\delta_{1g-2}$ are extremal in $\Eff^2(\BM_g)$ by Proposition~\ref{prop:subvariety}. 

Finally, for $\Delta_{0g-1}$, the argument is similar to that of $\Delta_{01a}$ in the proof of Theorem~\ref{thm:eff3}. Since $e_\tau(\Delta_{0g-1}) > 0$, we 
can apply Proposition~\ref{prop:union} to the setting $Y = \Delta_0 \cup \cdots \cup \Delta_{[g/2]}$ and $D_0 = \Delta_0$. Note that 
$\Delta_i\cap \Delta_j = \Delta_{ij} \cup \Delta_{i g-j}$ and $\Delta_{i}\cap \Delta_{g/2} = \Delta_{i g/2}$ for $0 \leq i < g/2$. Lemmas~\ref{lem:delta1} (ii) and~\ref{lem:i1} ensure that Proposition~\ref{prop:union} applies in this case, thus proving that $\Delta_{0g-1}$ is extremal in $\Eff^2(\BM_g)$. 
\end{proof}

The techniques of Theorem \ref{thm:eff2mg} allow us to show that  certain boundary strata of arbitrarily high codimension are extremal. In the next theorem, we will give the simplest examples. Given a stable dual graph $\Gamma$ of arithmetic genus $g$, let $\Delta_{\Gamma}$ denote the closure of the stratum indexed by $\Gamma$ in the topological stratification of $\BM_g$.  Denote the class of $\Delta_{\Gamma}$ by $\delta_{\Gamma}$.  The codimension of $\Delta_{\Gamma}$ is the number of edges in $\Gamma$.

Let $\kappa(g,r)$ denote the set of stable dual graphs of arithmetic genus $g$ with $r$ edges such that the geometric genera of the curves associated to each node is at least two. Let $\kappa'(g,r)$ be the set of stable dual graphs of arithmetic genus $g$ with $r$ edges such that the geometric genera of the curves associated to all the nodes but one is at least two and the remaining genus is at least one. Clearly $\kappa (g,r) \subset \kappa'(g,r)$.  With this notation, we have the following application of Propositions \ref{prop:main-tool} and \ref{prop:extremal}.

\begin{lemma}\label{lemma:high}
Let $\Gamma \in \kappa(g,r)$ (resp. $\Gamma \in \kappa'(g,r)$). If  the class $\delta_{\Gamma}$ is not in the span of the classes $\delta_{\Xi}$ for $\Gamma \not= \Xi \in \kappa(g,r)$ (resp. $\Gamma \not= \Xi \in \kappa'(g,r)$), then $\delta_{\Gamma}$ is an extremal cycle of codimension $r$ in $\BM_g$.
\end{lemma}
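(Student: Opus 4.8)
The plan is to run the Torelli argument of Theorem~\ref{thm:eff2mg} in codimension $r$, using Proposition~\ref{prop:main-tool} together with a mild strengthening of the proof of Proposition~\ref{prop:extremal} that accommodates the weaker linear-algebra hypothesis of the lemma. Write $k=3g-3-r=\dim\Delta_\Gamma$, and let $\tau\colon\BM_g\to\calA^{\sat}_g$ be the extended Torelli map. I will take the threshold $k-m$ to be $2r$ in the case $\Gamma\in\kappa(g,r)$ and $2r-1$ in the case $\Gamma\in\kappa'(g,r)$. The first step is to compute $e_\tau(\Delta_\Gamma)$. Because $\tau$ sends a curve to the product of the Jacobians of the components of its normalization, and these Jacobians determine the components up to isomorphism by the Torelli theorem, the fiber of $\tau$ restricted to the open stratum $\Delta_\Gamma^{\circ}$ parametrizes the choices of the $2r$ gluing points on a fixed set of components, taken modulo their automorphisms. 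For $\Gamma\in\kappa(g,r)$ every component has genus at least two, so these automorphism groups are finite and $e_\tau(\Delta_\Gamma)=2r$; if exactly one component has genus one (and the rest genus at least two) the one-dimensional group of translations lowers the count by one and $e_\tau(\Delta_\Gamma)=2r-1$. One checks $k>m\ge 0$ from the minimal genus $g\ge r+2$ (resp. $g\ge r+1$) forced by the hypotheses on the vertices.

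The heart of the proof is the converse classification: I must show that the only codimension $r$ subvarieties $Z\subset\BM_g$ with $e_\tau(Z)\ge 2r$ (resp. $\ge 2r-1$) are the strata $\Delta_\Xi$ with $\Xi\in\kappa(g,r)$ (resp. $\Xi\in\kappa'(g,r)$). As the threshold is positive and the exceptional locus of $\tau$ is $\Delta$, such a $Z$ lies in the boundary; let $\Gamma'$ be the dual graph of its general point, with $r'$ edges, so that $Z\subseteq\Delta_{\Gamma'}$ and hence $r'\le r$. The key input is that $\tau$ restricted to $\Delta_{\Gamma'}^{\circ}$ is equidimensional: every fiber is a configuration space of the $2r'$ gluing points modulo the automorphism groups of the components, whose identity components have dimension $0$, $1$, or $3$ according as the genus is at least two, equal to one, or zero. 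Thus each fiber has dimension $2r'-\#\{v:g_v=1\}-3\,\#\{v:g_v=0\}$, and because $Z\cap\Delta_{\Gamma'}^{\circ}$ is dense in $Z$ the fiber-dimension inequality gives $e_\tau(Z)\le 2r'-\#\{v:g_v=1\}-3\,\#\{v:g_v=0\}\le 2r'\le 2r$. Comparing with the assumption then forces $r'=r$, whence $\dim Z=\dim\Delta_{\Gamma'}$ and $Z=\Delta_{\Gamma'}$, and also forces $\#\{v:g_v=1\}+3\,\#\{v:g_v=0\}$ to be $0$ (resp. at most $1$), i.e. $\Gamma'\in\kappa(g,r)$ (resp. $\kappa'(g,r)$). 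I expect the equidimensionality statement --- ruling out jumps in fiber dimension over the special loci where components acquire extra automorphisms --- to be the main technical point; it rests on the fact that the dimension of the automorphism group of a smooth curve depends only on its genus.

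It remains to deduce extremality from the hypothesis that $\delta_\Gamma$ is not in the span of the $\delta_\Xi$ with $\Xi\neq\Gamma$. Suppose $\delta_\Gamma=\sum_j a_j[D_j]$ in $N^r(\BM_g)$ with $a_j>0$ and $D_j$ irreducible of codimension $r$. Since $e_\tau(\Delta_\Gamma)$ equals the chosen threshold $k-m$, Proposition~\ref{prop:main-tool} yields $e_\tau(D_j)\ge k-m$ for all $j$, so by the classification each $D_j=\Delta_{\Xi_j}$ with $\Xi_j\in\kappa(g,r)$ (resp. $\kappa'(g,r)$). Collecting terms gives $\delta_\Gamma=\sum_\Xi c_\Xi\,\delta_\Xi$ with $c_\Xi\ge 0$, hence $(1-c_\Gamma)\delta_\Gamma=\sum_{\Xi\neq\Gamma}c_\Xi\,\delta_\Xi$. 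If $c_\Gamma\neq 1$ this places $\delta_\Gamma$ in the span of $\{\delta_\Xi:\Xi\neq\Gamma\}$, contradicting the hypothesis; therefore $c_\Gamma=1$ and $\sum_{\Xi\neq\Gamma}c_\Xi\,\delta_\Xi=0$. Intersecting this relation with $A^{3g-3-r}$ for an ample class $A$ and using that each $\Delta_\Xi$ has positive degree forces all remaining coefficients to vanish, so every $D_j$ equals $\Delta_\Gamma$ and $\delta_\Gamma$ spans an extremal ray of $\Eff^r(\BM_g)$.
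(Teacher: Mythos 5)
Your proposal is correct and follows essentially the same route as the paper's proof: the same indices $e_\tau(\Delta_\Gamma)=2r$ (resp.\ $2r-1$), the same classification of codimension~$r$ subvarieties with $e_\tau\geq 2r$ (resp.\ $\geq 2r-1$) as exactly the strata indexed by $\kappa(g,r)$ (resp.\ $\kappa'(g,r)$), followed by Proposition~\ref{prop:main-tool} and the hypothesis that $\delta_\Gamma$ is not in the span of the other $\delta_\Xi$. The only difference is that you supply details the paper asserts without proof, namely the fiber-dimension count for $\tau$ on open strata via automorphism groups and the final step killing the remaining coefficients by intersecting with a power of an ample class.
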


\begin{proof}
If $\Gamma \in \kappa(g,r)$, then $e_\tau(\Delta_{\Gamma}) = 2r$. If $\Gamma \in \kappa'(g,r) - \kappa(g,r)$, then $e_\tau(\Delta_{\Gamma}) = 2r-1$. Conversely, if $Z \subset \BM_{g}$ is an irreducible variety of codimension $r$ such that $e_\tau(Z) \geq 2r$, then $Z= \Delta_{\Gamma}$ for some $\Gamma \in \kappa(g,r)$. Similarly, if $e_\tau(Z) \geq 2r-1$, then $Z = \Delta_{\Gamma}$ for some $\Gamma \in \kappa'(g,r)$. By Proposition \ref{prop:main-tool}, any effective linear combination expressing $\delta_{\Gamma}$ for $\Gamma \in \kappa(g,r)$ (resp. $\kappa'(g,r)$) must be of the form $\sum a_{\Xi} \delta_{\Xi}$ with $\Xi \in \kappa(g,r) $ (resp. $\kappa'(g,r)$).  Since $\delta_{\Gamma}$ is not in the span of $\delta_{\Xi}$ for $\Gamma \not= \Xi \in \kappa(g,r)$ (resp. $\kappa'(g,r)$), all the coefficients except for $a_{\Gamma}$ have to be zero. Therefore, $\delta_{\Gamma}$ is an extremal cycle of codimension $r$. 
\end{proof}

In view of Lemma \ref{lemma:high}, it is interesting to determine when the classes $\delta_{\Gamma}$ are independent. Using test families, we give examples of independent classes, which by Lemma \ref{lemma:high} are extremal. 

Let $T_r(g) \subset \kappa(g,r)$ be the set of dual graphs whose underlying abstract graph is a tree with $r$ leaves. If $\Gamma \in T_r(g)$, then the general point of $\Delta_{\Gamma}$ parameterizes curves with $r$ components $C_1, \dots, C_r$ each attached at one point to distinct points on a curve $C_0$. In addition, each of the curves $C_i$ for $0 \leq i \leq r$ have genus at least 2. Let $T_r'(g) \subset \kappa'(g,r)$ be the set of dual graphs whose underlying abstract graph is a tree with $r$ leaves and the central component $C_0$ has genus one. Let $T_r''(g) \subset \kappa'(g,r)$ be the set of dual graphs whose underlying abstract graph is a tree with $r$ leaves, one of the components $C_i$, $i\not=0$, has genus one and the genus $g_0$ of the central component satisfies $4g_0 + 9 \geq r$.  

Let $L_{r-1, g_0}(g) \subset \kappa(g,r)$ denote the set of dual graphs whose underlying abstract graph consists of a vertex  $v_0$ with a self-loop and $r-1$ leaves, where $v_0$ is assigned a curve of geometric genus $g_0$ satisfying $4g_0 + 9 \geq r$. If $\Gamma \in L_{r-1, g_0}(g)$, then the general point of $\Delta_{\Gamma}$ parameterizes curves with $r-1$ components $C_1, \dots, C_{r-1}$ attached at one point to distinct smooth points on a one-nodal central curve $C_0$ (corresponding to $v_0$)  of geometric genus $g_0$.

\begin{theorem}\label{thm:arbitrarily-high}
If $\Gamma$ is a dual graph in $T_r(g) \cup T_r'(g) \cup T_r''(g) \cup L_{r-1, g_0}(g)$, then $\delta_{\Gamma}$ is independent from all the classes $\delta_{\Xi}$ for $\Gamma \not= \Xi \in \kappa'(g,r)$ and $\delta_{\Gamma}$ is an extremal codimension $r$ class in $\BM_g$.
\end{theorem}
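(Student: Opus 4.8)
The extremality will follow from the independence statement together with Lemma~\ref{lemma:high}: once I know that $\delta_\Gamma$ is not in the span of the classes $\delta_\Xi$ for $\Gamma\neq\Xi\in\kappa'(g,r)$, Lemma~\ref{lemma:high} immediately gives that $\delta_\Gamma$ is an extremal cycle of codimension $r$. So the whole task reduces to proving the independence, and for this it suffices to produce, for each $\Gamma$ in the four families, a \emph{complete} $r$-dimensional test family $B\subset\BM_g$ whose class pairs nontrivially with $\delta_\Gamma$ but trivially with every other $\delta_\Xi$, $\Xi\in\kappa'(g,r)$. Applying the functional $(\,\cdot\,)\cdot[B]$ to a hypothetical relation expressing $\delta_\Gamma$ through the remaining classes then yields a contradiction.

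The basic mechanism is to build $B$ inside the open stratum $\Delta_\Gamma^{\circ}$. Distinct open strata of the same codimension $r$ are disjoint, and $\Delta_\Gamma^{\circ}\cap\overline{\Delta_\Xi}=\emptyset$ for every codimension-$r$ graph $\Xi\neq\Gamma$; hence if $B$ is complete and contained in $\Delta_\Gamma^{\circ}$ then $B\cdot\delta_\Xi=0$ automatically for all such $\Xi$. The remaining number $B\cdot\delta_\Gamma$ is then a self-intersection: since $\iota^{*}\delta_\Gamma=c_r\big(N_{\Delta_\Gamma/\BM_g}\big)$ and the normal bundle splits as the sum of the node line bundles over the $r$ edges of $\Gamma$ (with $c_1(N_e)=-\psi_e'-\psi_e''$), one obtains
\[
B\cdot\delta_\Gamma \;=\; \deg\Big(\prod_{e\in E(\Gamma)}\big(-\psi_e'-\psi_e''\big)\big|_B\Big)
\]
up to a positive factor coming from $\operatorname{Aut}(\Gamma)$.

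For $\Gamma\in T_r(g)$ and $\Gamma\in T_r'(g)$ this is clean. Fix a general pointed central curve with its $r$ attaching points distinct, and on each leaf $C_i$ let the attaching point $z_i$ vary over all of $C_i$; take $B=\prod_{i=1}^{r}C_i$. Because the moving points live on distinct leaf curves and the central attaching points are fixed, no member degenerates, so $B$ is complete and lies entirely in $\Delta_\Gamma^{\circ}$. In the product above the central branch of each node is fixed, so only the leaf $\psi$-classes survive and $B\cdot\delta_\Gamma=(-1)^r\prod_{i=1}^r(2g_i-2)\neq 0$, using that every leaf has genus at least two; the genus of the central component (whether one or at least two) is irrelevant. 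This disposes of both tree cases and explains why they carry no further numerical hypothesis.

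The cases $T_r''(g)$ and $L_{r-1,g_0}(g)$ are the heart of the matter. Here exactly one node—the edge to the genus-one leaf for $T_r''$, the self-loop for $L_{r-1,g_0}$—cannot be activated by moving a point on a genus-one or nodal component, since the corresponding $\psi$-class has degree zero. I would instead activate it from the genus-$g_0$ central side: for $L_{r-1,g_0}$ slide one branch of the node over the normalization $\widetilde{C}_0$, and for $T_r''$ slide the attaching point of the genus-one leaf over $C_0$, while still moving the points on the genus $\geq 2$ leaves. The self-intersection becomes $(-1)^r(2g_0-2)\prod_i(2g_i-2)$, nonzero because $g_0\geq 2$. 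The price is that the moving point on $C_0$ collides with the finitely many fixed points of $C_0$, so the compactified $B$ acquires finitely many degenerate members in deeper strata, and the \emph{main obstacle} is to show these do not contribute to $B\cdot\delta_\Xi$ for any $\Xi\in\kappa'(g,r)$ with $\Xi\neq\Gamma$. Some collision limits lie in $\overline{\Delta_\Xi}$ only for graphs $\Xi$ carrying two nodes incident to a genus-one component, which fail to lie in $\kappa'(g,r)$ and are harmless; the remaining limits must be steered away from the strata of $\kappa'(g,r)$. This is precisely the role of the hypothesis $4g_0+9\geq r$: it provides enough freedom in the placement of the fixed points on $C_0$ (and, when useful, in trading a static genus-one leaf for a one-parameter pencil of plane cubics) to arrange that the only surviving pairing is with $\delta_\Gamma$. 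Carrying out this bookkeeping—identifying each degenerate member of $B$ and checking that it lies in no $\overline{\Delta_\Xi}$ with $\Xi\in\kappa'(g,r)$—is the technical crux; granting it, $(\,\cdot\,)\cdot[B]$ separates $\delta_\Gamma$, and Lemma~\ref{lemma:high} yields the extremality.
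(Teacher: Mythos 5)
Your overall strategy (reduce to independence via Lemma~\ref{lemma:high}, then kill a hypothetical relation by pairing with test families) is exactly the paper's, and your families for $T_r(g)$ and $T_r'(g)$ coincide with the paper's family $X$ (vary the attachment points on the leaves; $[X]\cdot\delta_\Gamma=\prod_i(2-2g(C_i))\neq 0$ by Edidin's lemma). The gap is in the cases $T_r''(g)$ and $L_{r-1,g_0}(g)$, where you replace the paper's construction by ``sliding'' a branch of the problematic node along the central curve. This is not mere bookkeeping that can be granted: in the $L_{r-1,g_0}$ case your family provably fails to annihilate the other classes of $\kappa'(g,r)$. Concretely, let $y',y''$ be the two branches of the self-node on the normalization $\widetilde{C}_0$, with $y'$ moving. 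When $y'$ collides with the \emph{fixed} branch $y''$, the stable limit is $\widetilde{C}_0$ with a rational nodal tail attached at $y''$ (plus the $r-1$ leaves); smoothing the self-node of that rational tail shows this limit lies in $\overline{\Delta_{\Xi''}}$, where $\Xi''$ is the \emph{tree} with central genus $g_0$ and leaves of genera $1,g_1,\dots,g_{r-1}$ --- a graph in $\kappa'(g,r)$ (indeed of $T_r''$ type), not one of your ``harmless'' graphs. The intersection along this collision divisor $D\cong C_1\times\cdots\times C_{r-1}$ is excess, and the excess contribution does not vanish: locally $\overline{\Delta_{\Xi''}}$ is cut out by the smoothing parameters of the blow-up node and of the $r-1$ leaf nodes, so its normal bundle restricted to $D$ has Chern roots $0,\,(2-2g_1)\mathrm{pr}_1^*[\pt],\dots,(2-2g_{r-1})\mathrm{pr}_{r-1}^*[\pt]$, and since $N_{D/B}$ is trivial the contribution is $c_{r-1}$ of this bundle, namely $\prod_{i=1}^{r-1}(2-2g_i)\neq 0$. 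Hence $[B]\cdot\delta_{\Xi''}\neq 0$ and the functional $[B]\cdot(\,\cdot\,)$ does not separate $\delta_\Gamma$ from the span of the remaining classes. Relatedly, your reading of the hypothesis $4g_0+9\geq r$ is incorrect: placing distinct fixed points on a positive-genus curve needs no inequality, and no placement avoids the collision $y'=y''$ above. In the paper the inequality simply guarantees that the pencil used there has enough base points ($4g_0+8$ of them) to receive the $r-1$ tails.

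The paper avoids all of this by never moving points on the central curve. For $L_{r-1,g_0}$ it takes a general pencil $P$ of curves of type $(2,g_0+2)$ on $\PP^1\times\PP^1$ and attaches the $r-1$ leaves at distinct \emph{base points} of $P$, varying the points of attachment on the leaves; the resulting $r$-fold $Y$ meets codimension-$r$ boundary strata only at the $8g_0+12$ nodal members of the pencil, all of graph $\Gamma$, away from any collision, so $[Y]\cdot\delta_\Xi=0$ for all $\Gamma\neq\Xi\in\kappa'(g,r)$ and $[Y]\cdot\delta_\Gamma=(8g_0+12)\prod_{i=1}^{r-1}(1-2g(C_i))\neq 0$. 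For $T_r''$ it replaces the genus-one leaf by a general pencil of plane cubics attached to $C_0$ at a base point; that family does meet the strata $\Delta_\Psi$ with $\Psi\in L_{r-1,g(C_0)}(g)$ (when a cubic degenerates), which is why the paper treats $L$ \emph{before} $T''$ and uses $c_\Psi=0$ there. Your construction could conceivably be salvaged in the opposite order (your $T''$ family appears to have vanishing excess contributions, since the relevant normal bundles acquire two trivial Chern roots, and one could then feed $c_{\Xi''}=0$ into the $L$ case), but that requires precisely the excess-intersection analysis you have deferred, and the mechanism you propose in its place --- steering collisions away from $\kappa'(g,r)$ strata via $4g_0+9\geq r$ --- cannot work.
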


\begin{proof}
Suppose there exists a linear relation 
\begin{equation}\label{lin-rel}
\sum_{\Xi \in \kappa'(g,r)} c_{\Xi} \delta_{\Xi}=0
\end{equation} 
among the classes with $\Xi \in \kappa'(g,r)$. Using test families, we will show that if $\Gamma \in T_r(g) \cup T_r'(g) \cup T_r''(g) \cup L_{r-1, g_0}(g)$, then the coefficient $c_{\Gamma}$ in the linear relation has to be zero proving that the class $\delta_{\Gamma}$ is independent from the classes indexed by $\kappa'(g,r)$.

First, assume that $\Gamma \in T_r(g) \cup T_r'(g)$. Fix a general curve in $\Delta_{\Gamma}$. Recall that the curve has $r$ components $C_1, \dots, C_r$ attached to a central component $C_0$. Let $X$ be the $r$-dimensional test family obtained by varying the attachment point on the curves $C_i$ for $1 \leq i \leq r$. Every curve in the family $X$ has the same dual graph $\Gamma$. Hence, $[X] \cdot \delta_{\Xi}=0$ for every $\Gamma \not= \Xi \in \kappa'(g,r)$.  By \cite[Lemma 3.5]{EdidinChow}, the intersection number $[X] \cdot \delta_{\Gamma} = \prod_{i=1}^r (2 - 2 g(C_i))$. Since $g(C_i) > 1$, $[X] \cdot \delta_{\Gamma} \not= 0$.
Intersecting  Relation \eqref{lin-rel}, we conclude that $c_{\Gamma} =0$. Hence, $\delta_{\Gamma}$ is independent of the classes $\delta_{\Xi}$ with $\Gamma \not= \Xi \in \kappa'(g,r)$ and  by Lemma \ref{lemma:high} is an extremal codimension $r$ class. 

Next,  suppose $\Gamma \in L_{r-1, g_0}(g)$. A general point of $\Delta_{\Gamma}$ has $r-1$ curves $C_1, \dots, C_{r-1}$ attached to a one-nodal curve $C_0$ of geometric genus $g_0$. Let $P$ be a general pencil of curves of type $(2, g_0 +2)$ on $\PP^1 \times \PP^1$. Such a pencil has $4g_0 + 8$ base points. Let $Y$ be the $r$-dimensional family obtained by varying $C_0$ in the pencil $P$ and attaching $C_1, \dots, C_{r-1}$ at distinct base-points of $P$ along varying points on $C_1, \dots, C_{r-1}$.  We need the inequality $4g_0 + 9 \geq r$ to ensure that we can form this family. The family $Y$ intersects the codimension $r$ boundary components exactly when a member of the pencil becomes nodal. Hence, $Y$ intersects the codimension $r$  boundary stratum $\Delta_{\Gamma}$ and is disjoint from all other codimension $r$ bounday strata. We conclude that $[Y] \cdot \delta_{\Xi} = 0$ for $\Gamma \not= \Xi \in \kappa'(g,r)$. By \cite[Lemma 3.4]{EdidinChow}, $[Y] \cdot \delta_{\Gamma} = (8g_0 + 12) \prod_{i=1}^{r-1} (1 - 2g(C_i))$, which is not zero since $g(C_i) \geq 2$ for $0 \leq i \leq r-1$. Intersecting Relation \eqref{lin-rel} with $Y$, we conclude that $c_{\Gamma} =0$. Hence, $\delta_{\Gamma}$ is independent of the classes $\delta_{\Xi}$ with $\Gamma \not= \Xi \in \kappa'(g,r)$ and  by Lemma \ref{lemma:high} is an extremal codimension $r$ class. 

Finally, suppose that $\Gamma \in T_r''(g)$. A general point of $\Delta_{\Gamma}$ parameterizes a curve with $r$ components $C_1, \dots, C_r$ attached to a central component $C_0$, where $4 g(C_0) + 9 \geq r$. In addition, one of the components, say $C_1$, has genus one. Let $Q$ be a general pencil of plane cubics. Let $Z$ be the $r$-dimensional family obtained by attaching $C_0$ to the pencil $Q$ at a base point and varying the points of attachments on $C_2, \dots, C_r$. The pencil $Q$ has $12$ nodal members. Since the geometric genus of the nodal curves in $Q$ are zero,  the only  boundary strata $\delta_{\Xi}$, with $\Xi \in \kappa'(g,r)$, that  $Z$ intersects are $\Delta_{\Gamma}$ and $\Delta_{\Psi}$ with $\Psi \in L_{r-1, g(C_0)}(g)$. By \cite[Lemma 3.5]{EdidinChow}, $[Z] \cdot \delta_{\Gamma} = (-1)\cdot \prod_{i=2}^r (2- 2g(C_i)) \not= 0$. By the previous paragraph, the coefficients $c_{\Psi}$ in the Relation \eqref{lin-rel} are zero. Hence, intersecting the relation with $Z$, we conclude that $c_{\Gamma} =0$. Hence, $\delta_{\Gamma}$ is independent of the classes $\delta_{\Xi}$ with $\Gamma \not= \Xi \in \kappa'(g,r)$ and  by Lemma \ref{lemma:high} is an extremal codimension $r$ class. This concludes the proof of the theorem.
\end{proof}

\begin{remark}
If $2r+2 \geq g$, then the sets $T_r(g), T_r'(g)$ and $T_r''(g)$ are non-empty. Hence, Theorem \ref{thm:arbitrarily-high} gives examples of extremal cycles of arbitrarily high codimension. 
\end{remark}

As mentioned before, it would be interesting to find an extremal higher codimension cycle not contained in the boundary of 
$\BM_g$. Since for $g\geq 1$ any birational morphism from $\BM_{g,n}$ has its exceptional locus contained in the 
boundary (\cite[Corollary 0.11]{GibneyKeelMorrison}), one cannot directly apply Proposition~\ref{prop:main-tool}. 
Nevertheless, we are able to find a non-boundary extremal codimension two cycle in $\BM_4$. 

Let $H\subset \BM_4$ be the closure of the locus of hyperelliptic curves and let $GP\subset \BM_4$ be the closure of the Gieseker-Petri special curves, i.e. curves whose canonical embeddings are contained in a quadric cone in $\bbP^3$. 

\begin{theorem}
\label{thm:hyp-4}
The cycle class of $H$ is extremal in $\Eff^2(\BM_4)$. 
\end{theorem}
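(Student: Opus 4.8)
The plan is to exploit the fact that $H$ is contained in the Gieseker--Petri divisor $GP$, reduce the extremality of $[H]$ to the divisor theory of $GP$, and replace the morphism used in Proposition~\ref{prop:subvariety} by an explicit covering family argument. This last point is forced on us: by \cite[Corollary 0.11]{GibneyKeelMorrison} no birational morphism from $\BM_4$ can contract the non-boundary divisor $GP$, so Proposition~\ref{prop:main-tool} cannot be applied directly to push effective representatives of $[H]$ into $GP$.

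First I would recall the geometry of the canonical model. A non-hyperelliptic genus four curve is the complete intersection of a quadric $Q$ and a cubic in $\PP^3$, and $Q$ is smooth precisely when the curve lies off $GP$; a curve in $GP$ corresponds to a quadric cone, and this includes every hyperelliptic curve, whose canonical image is a twisted cubic lying on a net of quadrics. This gives the inclusion $H\subset GP$ and realizes $H$ as a divisor inside the irreducible divisor $GP$. I would then show that $H$ spans an extremal ray of $\Eff^1(GP)$: varying the $2g+2=10$ branch points of the corresponding admissible double covers produces a curve $B$ sweeping out a dense subset of $H$, and a normal bundle computation should give $[B]\cdot[H]_{GP}<0$, so that $H$ is extremal (indeed rigid) in $GP$ by \cite[Lemma 4.1]{ChenCoskun}.

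The heart of the argument is to prove that in any effective decomposition $[H]=\sum_i a_i[Z_i]\in\Eff^2(\BM_4)$ with $a_i>0$ and $Z_i$ an irreducible codimension two subvariety, every $Z_i$ is contained in $GP$. For this I would use the covering surfaces obtained from nets of $(3,3)$ curves on smooth quadrics $Q\cong\PP^1\times\PP^1$: letting $Q$ range over the family of smooth quadrics and taking a general net in each $|\mathcal O_Q(3,3)|$ produces moving surfaces $S\subset\BM_4$ that sweep out the complement of $GP$. No member of such a family is hyperelliptic, so $[S]\cdot[H]=0$. If some $Z_{i_0}$ were not contained in $GP$, then choosing a net through a general point of $Z_{i_0}$ gives a surface $S$ that meets $Z_{i_0}$ in complementary dimension, so $[S]\cdot[Z_{i_0}]>0$, while $[S]\cdot[Z_j]\geq 0$ for every $j$ because $S$ is a member of a covering family meeting the $Z_j$ properly. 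Then $0=[S]\cdot[H]=\sum_i a_i\,[S]\cdot[Z_i]\geq a_{i_0}[S]\cdot[Z_{i_0}]>0$, a contradiction; the codimension two boundary strata are excluded by arranging that the discriminantal traces of these families sweep out the relevant boundary divisors as well. Hence each $Z_i$ is a divisor in $GP$.

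Finally I would lift the relation to $GP$. Using that the pushforward $A^1(GP)\to A^2(\BM_4)\to N^2(\BM_4)$ is injective, the expression $[H]=\sum_i a_i[Z_i]$ pulls back to an effective relation in $N^1(GP)$, and the extremality of $H$ in $\Eff^1(GP)$ forces each summand, hence each $[Z_i]$, to be proportional to $[H]$; this is precisely the mechanism of Proposition~\ref{prop:subvariety}, with the covering family taking the role of the morphism $f$. I expect the main obstacle to be the third paragraph together with the required divisor theory of the possibly singular divisor $GP$: because $GP$ cannot be contracted, controlling \emph{all} effective representatives demands that the covering families genuinely sweep out both $\BM_4\setminus GP$ and the boundary, and one must separately establish the extremality input in $\Eff^1(GP)$ and the injectivity of $A^1(GP)\to N^2(\BM_4)$, which is the delicate part of the proof.
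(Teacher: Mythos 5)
Your proposal has genuine gaps, and the most serious one is structural. You claim that covering families of $(3,3)$-curves on smooth quadrics force \emph{every} component $Z_i$ of an effective decomposition $[H]=\sum_i a_i[Z_i]$ into $GP$. This cannot work: those families only pass through points of $\BM_4$ parameterizing canonically embedded complete intersections and their irreducible nodal degenerations in $\Delta_0$; no member of such a family passes through a general point of a codimension two subvariety contained in $\Delta_1$, $\Delta_2$, or in deeper loci of $\Delta_0$. Hence components $Z_i\subset \Delta$ with $Z_i\not\subset GP$ can never be excluded by intersecting with these surfaces, and your suggested fix (making the ``discriminantal traces'' sweep the boundary divisors) does not address this, since the problem is excluding arbitrary codimension two subvarieties of $\Delta$, not covering boundary divisors. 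The correct dichotomy, which is what the paper proves, is $Z_i\subset GP\cup\Delta$, followed by a second family (a general net of quadrics containing one quadric cone, intersected with a fixed cubic) showing $Z_i\neq H\Rightarrow Z_i\subset\Delta$. The boundary components are then eliminated by a mechanism your proposal entirely lacks: restricting the relation to the interior forces $h=0$ in $N^2(\MM_4)$, contradicting Faber's computation that $h$ is a nonzero multiple of $\lambda^2$ on $\MM_4$. Without this endgame your argument cannot close even if all your intermediate claims held.

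There are two further problems. First, a general net in $|\OO_Q(3,3)|$ contains cuspidal members, which are pseudostable but not stable, so these nets define surfaces in $\BM_4^{\ps}$ rather than in $\BM_4$; the paper accordingly pushes the whole relation forward by $\ps_*$ and computes intersections there. Working directly in $\BM_4$ as you propose requires stable reduction, which modifies the surfaces along $\Delta_1$ and invalidates the asserted intersection numbers. Second, your reduction via the mechanism of Proposition~\ref{prop:subvariety} needs the divisor theory of $GP$: extremality of $H$ in $\Eff^1(GP)$ (you offer only a hoped-for normal bundle computation), $A^1(GP)\cong N^1(GP)$, and injectivity of $A^1(GP)\to N^2(\BM_4)$. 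None of these is established, and none is known; $GP$ is a non-boundary divisor whose rational divisor theory is not available in the way it is for boundary divisors. The paper's proof never needs any of these facts --- it never proves $H$ is extremal in $GP$ at all --- so your route replaces the paper's self-contained contradiction argument with unproven intermediate statements that are both harder and unnecessary.
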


\begin{proof}
The proof relies on analyzing the canonical model of a genus four curve $C$. If $C$ is $3$-connected and non-hyperelliptic, then the canonical embedding of $C$ is 
a complete intersection of a quadric surface $Q$ and a cubic surface $T$ in $\bbP^3$. If $Q$ is smooth, consider the linear system $V = |\OO(3,3)|$ on $Q$. Let $U\subset V$ be the open locus of smooth curves. Let $NI\subset V$ be the codimension one locus of nodal irreducible curves. 
Let $CP\subset V$ be the codimension two locus of irreducible curves with a cusp. Note that $V\backslash (U\cup NI \cup CP)$ has codimension three 
in $V$. Curves in  $U\cup NI$ are stable and curves in $CP$ are pseudostable. 

If $Q$ is a quadric cone with vertex $v$, let $F_2$ be the Hirzebruch surface obtained by blowing up $v$. Let $E$ be the exceptional $(-2)$-curve with class $e$ and let $f$ be the ruling class of $F_2$. If $[C]\in GP$ is general, then $C$ has class 
$3e + 6f$ as a curve in $F_2$. Consider the linear system $V' = |3e + 6f|$ on $F_2$. Let $U' \subset V'$ be the open locus of smooth curves. Let $NI'_1\subset V'$ be the codimension one locus of irreducible nodal curves. Let $NI'_2\subset V'$ be the codimension one locus of irreducible at-worst-nodal curves $B$ of class $2e+ 6f$ union $E$, where $B$ and $E$ intersect transversally at two distinct points. Let $NI'_3\subset V'$ be the codimension two locus of irreducible at-worst-nodal curves $B$ of class $2e+ 6f$ union $E$, where $B$ and $E$ are tangent at one point. Let 
$CP'\subset V'$ be the codimension two locus of irreducible curves with a cusp. Note that $V'\backslash (U'\cup NI'_1\cup NI'_2\cup NI'_3\cup CP')$ has codimension three in $V'$. Curves in $U'\cup NI'_1$ are stable and curves in $CP'$ are pseudostable. For a curve in $NI'_2$, its stablization as a curve in $Q$ has a node at $v$. For a curve in $NI'_3$, its pseudostablization as a curve in $Q$ has a cusp at $v$. In other words, blowing down $E$, curves in $NI'_2$ and in $NI'_3$ become stable and pseudostable, respectively. 

Let $h$ be the class of $H$. 
Suppose that 
\begin{eqnarray}
\label{eq:hyp-4}
h = \sum_i a_i [Z_i] \in N^2(\BM_4) 
\end{eqnarray}
for $a_i > 0$ and $Z_i\subset \BM_4$ irreducible subvariety of codimension two, not equal to $H$. 
Recall the first divisorial contraction $\ps: \BM_4\to \BM_{4}^{\ps}$ induced by replacing an elliptic tail by a cusp for curves in $\Delta_1$. Applying $\ps_{*}$, we obtain that 
\begin{eqnarray}
\label{eq:hyp-ps} 
\ps_{*}h = \sum_i a_i (\ps_{*}[Z_i]) \in N^2(\BM_4^{\ps}). 
\end{eqnarray}

We first show that $Z_i$ is contained in $GP\cup \Delta$ for all $i$. If $\ps_{*}[Z_i] = 0$, then $Z_i \subset \Delta_1$. Hence we may assume that $\ps_{*}[Z_i] \neq 0$ in \eqref{eq:hyp-ps}. Suppose that a general curve $[C]\in Z_1$ is not contained in $GP\cup \Delta$. Then $C$ is smooth and its canonical embedding is contained in a smooth quadric surface. 
Take a net $S$ in $V$ such that $S$ is spanned by $C$ and two other general $(3,3)$-curves. Then $S$ is disjoint from $V\backslash (U\cup NI \cup CP)$. Moreover, any curve parameterized in $S$ is pseudostable and not contained in $\ps(GP)$. For a curve $[D]\in \ps(Z_i)$, if $D$ fails to yield a canonical embedding lying on a smooth quadric, then $[D] \not \in S$. The locus of $\ps(Z_i)$ corresponds to a locus of codimension $\geq 2$ in $V$. 
Therefore, $[S]\cdot (\ps_{*}[H]) = 0$, $[S]\cdot (\ps_{*}[Z_1]) > 0$ and $[S]\cdot (\ps_{*}[Z_i]) \geq 0$ for $i \neq 1$. Plugging in \eqref{eq:hyp-ps} leads to a contradiction. We thus conclude that $Z_i \subset GP\cup \Delta$ for all $i$.  

Next, we show that if $Z_i \not= H$, then it is contained in $\Delta$. 
Suppose that a general curve $[C]\in Z_1\neq H$ is contained in $GP$ but not in $\Delta$. In particular, $C$ is smooth and non-hyperelliptic. 
Take a net $S$ of quadric surfaces in $\bbP^3$ such that it possesses a unique quadric cone $Q$ containing the canonical embedding of $C$ and is general other than that. Let $T$ be a cubic surface such that $C = T\cap Q$. The intersection of $T$ with surfaces parameterized in $S$ gives a two-dimensional family of curves in $\BM_4$, still denoted by $S$. There is a one-dimensional family of quadric cones in $S$, hence $S$ intersects $GP$ along a one-dimensional family $B$. Curves in $B$ become most degenerate when the vertex of the cone meets $T$. In this case the curve is a nodal curve contained in $GP$ such that the node coincides with the vertex, i.e. it belongs to the locus $NI'_2$ and is stable. We see that curves in $S$ are stable and non-hyperelliptic. Moreover, $B$ intersects  
$Z_1$ at finitely many points including $[C]$ and can be made general by the choice of $S$ to avoid any other locus of codimension two or higher in $V'$. 
It follows that $[S]\cdot h = 0$, $[S]\cdot [Z_1] > 0$ and $[S]\cdot [Z_i] \geq 0$ for $i \neq 1$. Plugging in \eqref{eq:hyp-4} leads to a contradiction. We thus conclude that if $Z_i \not= H$, then $Z_i$ is contained in $\Delta$. 

Now it follows from \eqref{eq:hyp-4} that $h = 0 \in N^2(\MM_4)$, which contradicts the fact that $h$ is a nonzero multiple of $\lambda^2$ in $\MM_4$ (\cite{FaberChow4}). 
\end{proof}



\section{The effective cones of $\BM_{0,n}$}
\label{sec:eff0n}

In this section, we study effective cycles on $\BM_{0,n}$. Since $\dim \BM_{0,n} = n-3$, there are no interesting higher codimension cycles for $n \leq 6$. Hence, from now on we assume that $n\geq 7$. 

Let $S$ be a subset of $\{1, \ldots, n\}$ such that $|S|, |S^c| \geq 2$. To make the notation symmetric, denote by $\Delta_{S, S^c}$ 
the boundary divisor of $\BM_{0,n}$ 
parameterizing genus zero curves that have a node that separates the curve into two curves,   
 one marked by $S$ and the other by $S^c$. By definition, $\Delta_{S, S^c} = \Delta_{S^c, S}$. The codimension one boundary strata of $\BM_{0,n}$ consist of the divisors $\Delta_{S, S^c}$, where the pair $\{S, S^c\}$ varies over subsets of $\{1,\ldots,n\}$ with $|S|, |S^{c}| \geq 2$. 

Consider the codimension two boundary strata of $\BM_{0,n}$. Let $S_1, S_2, S_3$ be an \emph{ordered} decomposition of $\{1,\ldots, n  \}$ with $s_i = |S_i|$ such that $s_1 + s_2 + s_3 = n$, 
$s_1, s_3 \geq 2$ and $s_2 \geq 1$. Let $D_{S_1, S_2, S_3}$ be the codimension two boundary stratum of $\BM_{0,n}$
whose general point parameterizes a chain of three smooth rational curves $C_1, C_2, C_3$ such that $C_i$ is marked by $S_i$. By definition, $D_{S_1, S_2, S_3} = D_{S_3, S_2, S_1}$. 

\begin{theorem}
\label{thm:eff0n}
The cycle class of $D_{S_1, S_2, S_3}$ is extremal in $\Eff^2(\BM_{0,n})$. 
\end{theorem}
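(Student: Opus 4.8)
The plan is to follow the ``induction on dimension'' strategy outlined in the introduction: first realize $D:=D_{S_1,S_2,S_3}$ as an extremal divisor inside a boundary divisor of $\BM_{0,n}$ that contains it, and then show that every effective codimension two cycle numerically equivalent to $[D]$ is supported on the boundary, so that Propositions~\ref{prop:subvariety} and~\ref{prop:union} promote extremality from the divisor to all of $\BM_{0,n}$. For the first step, the gluing morphism identifies the boundary divisor $\Delta_{S_1,S_2\cup S_3}$ with the product $\BM_{0,s_1+1}\times\BM_{0,s_2+s_3+1}$, under which $D$ becomes $\BM_{0,s_1+1}\times B$, where $B\subset\BM_{0,s_2+s_3+1}$ is the boundary divisor separating the marks $S_3$ from the rest. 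Since $B$ is extremal by Proposition~\ref{prop:boundary-extremal}, Corollary~\ref{cor:prod} shows that $D$ is extremal in $\Eff^1(\Delta_{S_1,S_2\cup S_3})$. The auxiliary hypotheses $A^1(\Delta_{S_1,S_2\cup S_3})\cong N^1(\Delta_{S_1,S_2\cup S_3})$ and injectivity of $A^1\to N^2(\BM_{0,n})$ hold because numerical, rational and homological equivalence all coincide on genus zero moduli spaces and their products.

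To confine effective representatives to the boundary I would use a Hassett reduction morphism. Assume first that one of the end groups, say $S_1$, satisfies $s_1\geq 3$. Choose weights $\calA$ with $\sum_{i\in S_1}a_i\leq 1$ and all remaining weights equal to one, giving $\rho=f_{\calA}\colon\BM_{0,n}\to\BM_{0,\calA}$. Collapsing the $S_1$-tail kills exactly the moduli of the positions of $S_1$, so $e_\rho(D)=s_1-2>0$. Hence by Proposition~\ref{prop:main-tool} every term $Z_i$ in an effective expression $[D]=\sum a_i[Z_i]$ has $e_\rho(Z_i)>0$, and so each $Z_i$ lies in the exceptional locus of $\rho$, which is contained in the union of boundary divisors $Y=\bigcup_{T\subseteq S_1,\ |T|\geq 3}\Delta_{T,T^c}$. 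When $s_1=3$ this exceptional locus is the single divisor $\Delta_{S_1,S_2\cup S_3}$, and Proposition~\ref{prop:subvariety} applies directly; when $s_1\geq 4$ one applies Proposition~\ref{prop:union} with $D_0=\Delta_{S_1,S_2\cup S_3}$ and the remaining $\Delta_{T,T^c}$ as the other divisors, checking that their pairwise intersections are codimension two strata extremal in each factor and satisfying the required sign condition on effective coefficients. All of these verifications reduce, via the gluing isomorphisms, to Corollary~\ref{cor:prod}, Proposition~\ref{prop:boundary-extremal} and the equality of numerical and rational equivalence in genus zero. By the symmetry $D_{S_1,S_2,S_3}=D_{S_3,S_2,S_1}$, the same argument also covers the case $s_3\geq 3$.

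The main obstacle is the remaining case $s_1=s_3=2$, which (since $n\geq 7$) forces $s_2\geq 3$. Here all of the moduli of $D$ is carried by the central component $C_2$, which has two nodes and is therefore never contracted by any weight reduction; consequently no reduction morphism drops the dimension of $D$, and the argument above breaks down. In this case I would instead use a forgetful morphism $\pi\colon\BM_{0,n}\to\BM_{0,n-1}$ dropping a point $\star\in S_2$, for which $e_\pi(D)=1$ and $\pi(D)=D_{S_1,S_2\setminus\{\star\},S_3}$. Proposition~\ref{prop:main-tool} then forces every effective representative to be $\pi$-vertical, but this alone does not place the representatives on the boundary. I expect one must supplement it either by an explicit test-surface argument in the spirit of the proofs of Theorems~\ref{thm:hw} and~\ref{thm:hyp-4}, constructing families $S$ with $[S]\cdot[D]=0$ but strictly positive intersection with any hypothetical non-boundary component, or by an induction on $n$ comparing $[D]$ with $\pi^{\ast}[D_{S_1,S_2\setminus\{\star\},S_3}]$ and using the extremality of the latter. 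Establishing that effective representatives are confined to the boundary in this degenerate case, and then matching the resulting configuration to the hypotheses of Proposition~\ref{prop:union}, is the technically delicate heart of the argument.
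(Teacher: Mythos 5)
Your proposal has a genuine gap exactly in the case you flag as unresolved, $s_1=s_3=2$, and the premise on which you abandon that case is false. You assert that the central component $C_2$, having two nodes, ``is never contracted by any weight reduction,'' so that no Hassett morphism drops the dimension of $D=D_{S_1,S_2,S_3}$. But Hassett reductions can contract entire subtrees, not just single tails, and this is precisely how the paper proceeds: take $\calC$ assigning weight $\frac{1}{n-2}$ to every point of $S_1\cup S_2$ and weight $1$ to the two points of $S_3$. On a curve parameterized by $D$, the tail $C_1$ is then unstable; once it is contracted, $C_2$ carries one node and marked points of total weight exactly $1$, so it becomes unstable as well, and the whole subtree $C_1\cup C_2$ collapses to a single point of $C_3$. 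Hence $e_{f_{\calC}}(D)=s_1+s_2-3=n-5>0$. Pairing $f_{\calC}$ with its mirror $f_{\calD}$ (small weights on $S_2\cup S_3$), Proposition~\ref{prop:main-tool} forces every component $Z_i$ of an effective decomposition of $[D]$ to lie in some $\Delta_{S,S^c}\cap\Delta_{T,T^c}$ with $S\subseteq S_1\cup S_2$, $T\subseteq S_2\cup S_3$, $|S|,|T|\in\{n-3,n-2\}$; every such intersection other than $D$ itself is a stratum with $e_{f_{\calD}}=n-6<n-5$, which Proposition~\ref{prop:main-tool} rules out, so $Z_i=D$. Your proposed substitutes for this case (verticality under a forgetful map, or unspecified test surfaces) do not close it, as you yourself acknowledge: verticality alone puts nothing on the boundary.

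Even in the cases you do treat, the argument is both heavier than necessary and incompletely justified. Proposition~\ref{prop:main-tool} gives you not merely $e_{\rho}(Z_i)>0$ but $e_{\rho}(Z_i)\geq s_1-2$; since a fiber of $f_{\calA}$ has dimension $s_1-2$ only over points where \emph{all} of $S_1$ has collided, this already pins $Z_i$ inside the single divisor $\Delta_{S_1,S_1^c}$ rather than the union over all $T\subseteq S_1$ with $|T|\geq 3$. When $s_3\geq 3$ as well, the same applies on the other side, so $Z_i\subseteq\Delta_{S_1,S_1^c}\cap\Delta_{S_3,S_3^c}=D_{S_1,S_2,S_3}$, forcing $Z_i=D$ outright --- no Proposition~\ref{prop:subvariety} or Proposition~\ref{prop:union} is needed; this is the paper's entire proof in that case. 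Moreover, where you do invoke Proposition~\ref{prop:union} (for $s_1\geq 4$), its key hypothesis --- that an effective combination $\sum a_{i,j,k}[D_{i,j,k}]$ forces all $a_{i,j,k}\geq 0$ --- does not ``reduce to'' Corollary~\ref{cor:prod}, Proposition~\ref{prop:boundary-extremal}, and the coincidence of equivalence relations in genus zero: extremality of the individual classes implies neither their linear independence nor this sign condition (compare Lemma~\ref{lem:i1}, which the paper proves by a dedicated moving-curve argument; in $\BM_{0,n}$ boundary classes satisfy many relations, so such statements require proof). So that branch of your argument also contains an unverified step, although, unlike the $s_1=s_3=2$ case, it is likely fixable.
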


\begin{proof}
By Proposition~\ref{prop:main-tool}, it suffices to exhibit morphisms $f$ such that $e_{f}(D_{S_1, S_2, S_3})>e_f(Z)$ for any subvariety $Z$ of codimension two that is not $D_{S_1, S_2, S_3}$. We will use the morphisms 
$f_{\calA}: \BM_{0,n} \to \BM_{0, \calA}$ introduced in Section~\ref{sec:prelim-moduli}. 
 
First, suppose $s_1, s_3 > 2$. 
Let $\calA$ be the weight parameter assigning $\frac{1}{s_1}$ to the marked points in $S_1$ and $1$ to the other marked points. Then $e_{f_{\calA}}(D_{S_1, S_2, S_3}) = s_1 - 2>0$. By Proposition~\ref{prop:main-tool}, 
if $e_{f_{\calA}}(Z)\geq s_1 - 2$, then $Z$ has to be contained in $\Delta_{S_1, S_1^{c}}$. 
Similarly, let $\calB$ assign $\frac{1}{s_3}$ to the marked points in $S_3$ and $1$ to the other marked points. 
Then $e_{f_{\calB}}(D_{S_1, S_2, S_3}) = s_3 - 2>0$. 
If $e_{f_{\calB}}(Z)\geq s_3 - 2$, then $Z$ is contained in $\Delta_{S_3, S_3^c}$. Since the intersection of $\Delta_{S_1, S_1^{c}}$ and $\Delta_{S_3, S_3^c}$ 
is exactly $D_{S_1, S_2, S_3}$, we conclude that  $D_{S_1, S_2, S_3}$ is extremal when $s_1, s_3 > 2$. 

Next, suppose that $s_1 = s_3 = 2$.  Since $n\geq 7$, we have $s_2 = n - 4 \geq 3$. Without loss of generality, 
let $S_1 = \{1,2\}$, $S_3 = \{3,4 \}$ and $S_2 = \{ 5, \ldots, n \}$. Let $\calC$  assign 
$\frac{1}{n-2}$ to the marked points in $S_1\cup S_2$ and $1$ to the marked points in $S_3$. 
We have $e_{f_{\calC}}(D_{S_1, S_2, S_3}) = s_1 + s_2 - 3 = n-5$. If $e_{f_{\calC}}(Z)\geq n-5$, 
then $Z$ is contained in $\Delta_{S, S^c}$, where $S\subset \{1,2, 5, \ldots, n\}$ and $|S| = n-2$ or $n-3$. 
Similarly let $\calD$ assign $\frac{1}{n-2}$ to the marked points in $S_3\cup S_2$ and $1$ to the marked points in $S_1$. 
We have $e_{f_{\calD}}(D_{S_1, S_2, S_3}) = s_2 + s_3 - 3 = n-5$. 
If $e_{f_{\calD}}(Z)\geq n-5$, then $Z$ is contained in $\Delta_{T, T^c}$ where $T\subset \{3,4, 5, \ldots, n\}$ and $|T| = n-2$ or $n-3$. 
When $|S| = |T| = n-2$, the intersection of $\Delta_{S, S^c}$ and $\Delta_{T, T^c}$ is exactly $D_{S_1, S_2, S_3}$. If 
$|S| = n-2$ and $|T| = n-3$, then the intersection of $\Delta_{S, S^c}$ and $\Delta_{T, T^c}$ is of the type 
$D_{S'_1, S'_2, S'_3}$ where $S'_1 = \{3,4\}$ and $S'_3 = \{1, 2, i\}$ for $i\in S_2$. We have 
$e_{f_{\calD}}(D_{S'_1, S'_2, S'_3}) = n-6 <  e_{f_{\calD}}(D_{S_1, S_2, S_3}) $. Finally, if 
$|S| = |T| = n-3$, the intersection of $\Delta_{S, S^c}$ and $\Delta_{T, T^c}$ is of the type 
$D_{S'_1, S'_2, S'_3}$ where $S'_1 = \{3,4, i\}$ and $S'_3 = \{1, 2, j\}$ for $i\neq j \in S_2$. 
We have $e_{f_{\calD}}(D_{S'_1, S'_2, S'_3}) = n-6 < e_{f_{\calD}}(D_{S_1, S_2, S_3})$.
This proves that  $D_{S_1, S_2, S_3}$ is extremal when $s_1= s_3 = 2$. 

The case $s_1 = 2$ and $s_3 > 2$ can be verified by a similar (and simpler) argument as in the above paragraph, so we omit the details. 
\end{proof}

 Next, we give explicit examples of extremal higher codimension cycles in $\BM_{0,n}$. Let 
$S_1, \ldots, S_k$ be an \emph{unordered} decomposition of $\{1, \ldots, n\}$ such that 
$k\geq 3$ and $|S_i|\geq 2$ for each $i$. Denote by $B_{S_1, \ldots, S_k}$ the subvariety of $\BM_{0,n}$ whose general point parameterizes a rational curve $R$ attached to $k$ rational tails $C_1,\ldots, C_k$ such that $C_i$ is marked by $S_i$. The codimension of $B_{S_1, \ldots, S_k}$ in $\BM_{0,n}$ equals $k$. 

\begin{theorem}
\label{thm:unmarked}
The cycle class of $B_{S_1, \ldots, S_k}$ is extremal in $\Eff^k(\BM_{0,n})$. 
\end{theorem}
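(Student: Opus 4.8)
The plan is to argue exactly as in the proof of Theorem~\ref{thm:eff0n}: suppose
\[
[B_{S_1,\ldots,S_k}] = \sum_j a_j [Z_j] \in \Eff^k(\BM_{0,n}), \qquad a_j>0,
\]
with each $Z_j\subset \BM_{0,n}$ an irreducible codimension $k$ subvariety, and try to show that every $Z_j$ must in fact equal $B_{S_1,\ldots,S_k}$. The geometric fact driving everything is that
\[
B_{S_1,\ldots,S_k} \;=\; \bigcap_{i=1}^k \Delta_{S_i,\,S_i^c}
\]
as sets: since the $S_i$ are pairwise disjoint and cover $\{1,\dots,n\}$, the divisors $\Delta_{S_i,S_i^c}$ are pairwise non-crossing, and a curve that simultaneously splits off each $S_i$ along a single node must have the star-shaped dual graph with an unmarked center. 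Because $B_{S_1,\ldots,S_k}$ is irreducible of codimension exactly $k$, it therefore suffices to prove that $Z_j\subseteq \Delta_{S_i,S_i^c}$ for \emph{every} $i$: the containment in the intersection $B_{S_1,\ldots,S_k}$ together with equality of dimensions forces $Z_j=B_{S_1,\ldots,S_k}$.

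\textbf{Groups of size at least three.} For each index $i$ with $|S_i|\geq 3$ I would use the reduction morphism $f_{\calA_i}\colon \BM_{0,n}\to \BM_{0,\calA_i}$ attached to the weight datum $\calA_i$ that assigns $\tfrac{1}{|S_i|}$ to the points of $S_i$ and weight $1$ elsewhere, exactly as in the $s_1>2$ case of Theorem~\ref{thm:eff0n}. The key elementary computation is a \emph{maximal drop} lemma: for any codimension $k$ subvariety $Z$ one has $e_{f_{\calA_i}}(Z)\leq |S_i|-2$, with equality if and only if a general curve of $Z$ carries all of $S_i$ on a single rational tail. Indeed, the dimension drop under $f_{\calA_i}$ comes only from contracting rational tails whose markings lie in $S_i$; if these tails carry $t_1,\dots,t_r$ of the points of $S_i$, the drop is $\sum_\ell (t_\ell-2)\leq |S_i|-2r\leq |S_i|-2$, with equality forcing $r=1$ and $t_1=|S_i|$. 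Since a general point of $B_{S_1,\ldots,S_k}$ has all of $S_i$ on one tail, $e_{f_{\calA_i}}(B_{S_1,\ldots,S_k})=|S_i|-2>0$, and Proposition~\ref{prop:main-tool} gives $e_{f_{\calA_i}}(Z_j)\geq |S_i|-2$ for all $j$; by the lemma this is an equality realized by a single $S_i$-tail, whence $Z_j\subseteq \Delta_{S_i,S_i^c}$. Thus, if \emph{every} $|S_i|\geq 3$, combining over all $i$ yields $Z_j\subseteq \bigcap_i \Delta_{S_i,S_i^c}=B_{S_1,\ldots,S_k}$ and the theorem follows immediately.

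\textbf{Reduction to the size-two core.} Let $I=\{i:|S_i|\geq 3\}$ and $J=\{i:|S_i|=2\}$. The previous paragraph gives $Z_j\subseteq \Delta:=\bigcap_{i\in I}\Delta_{S_i,S_i^c}$ for all $j$. Since $\BM_{0,m}$ satisfies $A_\ast\cong N_\ast$ and the boundary gluing maps are injective on Chow groups, one can pull the relation $[B_{S_1,\ldots,S_k}]=\sum_j a_j[Z_j]$ back to
\[
\Delta \;\cong\; \Bigl(\prod_{i\in I}\BM_{0,\,S_i\cup\{\star_i\}}\Bigr)\times \BM_{0,M},\qquad M=|I|+2|J|,
\]
and then strip off the factors indexed by $I$ using Corollary~\ref{cor:prod}. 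This reduces the whole statement to proving extremality of the \emph{core stratum} $T\subset \BM_{0,M}$ whose general point is an unmarked-or-lightly-marked central $\PP^1$ carrying the $|I|$ attaching points $\star_i$ together with $|J|$ rational tails, each marked by exactly two points.

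\textbf{The main obstacle.} The genuine difficulty, which I expect to be the crux of the argument, is precisely this core stratum $T$: the reduction morphisms $f_{\calA}$ are \emph{blind} to a size-two tail, because collapsing a two-pointed tail merely collides its two markings without dropping any moduli, while the central component, having at least three special points, is stable for every weight datum and so is never contracted. Consequently no reduction map $f$ has $e_f(T)>0$, and neither Proposition~\ref{prop:extremal} nor the complementary-contraction trick used for the $s_1=s_3=2$ case of Theorem~\ref{thm:eff0n} (which relied on the middle component of a chain having only two nodes, hence being contractible) applies verbatim. My plan is to handle $T$ by induction on $|J|$: peel off one size-two tail at a time through the divisor $\Delta_{S_i,S_i^c}\cong \BM_{0,M-1}$ via Proposition~\ref{prop:subvariety}, the base case $|J|$ equal to the value making $\dim T=1$ being an $F$-curve, whose extremality in $\Eff_1$ is classical. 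The delicate point in this induction is supplying, at each peeling step, a projective morphism whose exceptional locus lies in the chosen boundary divisor and which drops the dimension of $T$; since the rigidity of the two-pointed tail obstructs the obvious modular choices, I expect this step to require either a non-modular small contraction realizing the star-stratum as its exceptional locus, or a direct test-family argument (in the spirit of Theorems~\ref{thm:arbitrarily-high} and~\ref{thm:hyp-4}) producing a covering family $W$ of complementary dimension with $[T]\cdot W=0$ that forces any competing $Z_j$ back into the boundary. Establishing this is the part I anticipate will demand the most care.
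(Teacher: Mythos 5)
Your first case (all $|S_i|\geq 3$) matches the paper's argument exactly, and your identification of the size-two core as the crux is accurate --- but that is precisely the case you do not prove. Everything after ``The main obstacle'' is a plan rather than an argument: you explicitly defer the decisive step to ``either a non-modular small contraction \dots or a direct test-family argument,'' neither of which is constructed, so the proposal has a genuine gap exactly where the theorem is hardest. The idea you are missing is a different modular target: the moduli space $\overline{\calB}_{0,n}$ of $n$-pointed genus zero curves with rational $k$-fold singularities and no unmarked components, from \cite{ChenCoskunKont}. The birational morphism $f:\BM_{0,n}\to\overline{\calB}_{0,n}$ contracts precisely the unmarked components, so $e_f(B_{S_1,\ldots,S_k})=k-3>0$ whenever $k\geq 4$; Proposition~\ref{prop:main-tool} then forces every competing $Z_j$ to be a star stratum $B_{T_1,\ldots,T_k}$, the weight morphisms $f_{\calA_i}$ force $T_i=S_i$ for the parts of size greater than two and $|T_i|=2$ for the remaining parts, and a symmetry argument (the surviving candidates are images of $B_{S_1,\ldots,S_k}$ under relabeling automorphisms, and none can be a nonnegative combination of the others) finishes. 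For $k=3$ with some $s_i=2$, where $e_f=k-3=0$ and this tool fails, the paper instead uses $n\geq 7$ to guarantee some $s_i>2$, places all $Z_j$ inside $\Delta_{S_3,S_3^c}\cong\BM_{0,m+1}\times\BM_{0,n-m+1}$, identifies $B_{S_1,S_2,S_3}$ with $D_{S_1,\{p\},S_2}\times\BM_{0,n-m+1}$, and concludes by Theorem~\ref{thm:eff0n}, Corollary~\ref{cor:prod} and Proposition~\ref{prop:subvariety}.

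Worse, your reduction to the core is actively counterproductive: passing to $\bigcap_{i\in I}\Delta_{S_i,S_i^c}$ and stripping off the large tails replaces the \emph{unmarked} central component by a component marked by the attaching points $\star_i$, which destroys exactly the structure the missing contraction exploits --- $f$ only contracts unmarked components, so it induces no dimension drop on your core $T$, and even the tool that saves the paper would be unavailable after your reduction. In addition, the core stratum $T$ (with a marked center) is not an instance of the theorem being proved, so your proposed induction on the number of two-pointed tails has no statement within the theorem's scope to induct on; and the injectivity of the pushforward on numerical groups for these deeper strata, which you assert in passing, is only cited in the paper (from Keel) for the divisor case $N^2(\Delta_{S,S^c})\to N^3(\BM_{0,n})$. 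Your own closing assessment is correct: the step you flagged as demanding the most care is the step that is absent.
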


\begin{proof}
Suppose that $[B_{S_1, \ldots, S_k}] = \sum_{j=1}^{r} a_j[Z_j] \in N^k(\BM_{0,n})$, 
where $a_j > 0$ and $Z_j\subset \BM_{0,n}$ is an irreducible subvariety of codimension $k$. First, consider the case $|S_i| = s_i > 2$ for all $i$. 
Let $\calA_i$ assign $\frac{1}{s_i}$ to the marked points in $S_i$ and $1$ to the other marked points. Then $e_{f_{\calA_i}}(B_{S_1, \ldots, S_k}) = s_i -2 > 0$. 
If $e_{f_{\calA_i}}(Z) \geq s_i -2$, then $Z$ has to be contained in $\Delta_{S_i, S_i^c}$. Hence by Proposition~\ref{prop:main-tool}, $Z_j$ is contained in $\Delta_{S_1,S_1^c}\cap \cdots \cap \Delta_{S_k,S_k^c}$ for all $j$. The intersection locus is exactly $B_{S_1, \ldots, S_k}$, which is irreducible. We thus conclude that $Z_j = B_{S_1, \ldots, S_k}$. 

Next, consider the case $k\geq 4$ and some $s_i = 2$. Without loss of generality, assume that $S_{i} = \{2i-1, 2i \}$ for $1\leq i \leq l$ and $s_{l+1},\ldots, s_k > 2$. Let $\overline{\calB}_{0,n}$ be the moduli space of $n$-pointed genus zero curves with rational $k$-fold singularities (without unmarked components). A rational $k$-fold singularity is locally isomorphic to the intersection of the $k$ concurrent coordinate axes in 
$\mathbb A^k$. There is a birational morphism $f: \BM_{0,n}\to \overline{\calB}_{0,n}$ contracting unmarked components $X$ to a rational $k$-fold singularity, where $k = \# (X\cap 
\overline{C\backslash X})\geq 3$, see e.g. \cite[3.7--3.11]{ChenCoskunKont}. Note that 
$e_f(B_{S_1, \ldots, S_k}) = k - 3 > 0$. By Proposition~\ref{prop:main-tool}, we have $e_f(Z_j) \geq k -3$. Since 
an unmarked component with $m$ nodes loses $(m-3)$-dimensional moduli under $f$, we conclude that 
$Z_j$ has to be one of the $B_{T_1, \ldots, T_k}$. Using the morphism $f_{\calA_i}$ in the previous paragraph  
for $i > l$, we further conclude that $Z_j$ is of the type  
$B_{T_1, \ldots, T_l, S_{l+1}, \ldots, S_k}$, where $T_1, \ldots, T_l$ yield a decomposition of $\{ 1, \ldots, 2l\}$. Since 
$|T_i| \geq 2$, it implies that $|T_i| = 2$ for each $i$. Therefore, $Z_j$ is the image of $B_{S_1, \ldots, S_k}$ 
under the automorphism of $\BM_{0,n}$ induced by relabeling $\{2i-1, 2i \}$ as $T_i$ for each $i$. By symmetry,  
any one of them cannot be a nonnegative linear combination of the others. 
 
The remaining case is that $k=3$ and some $s_i=2$. Since $n\geq 7$, without loss of generality, assume that 
$S_1 = \{ 1,2\}$, $S_2 = \{3, \ldots, m \}$ and $S_3 = \{m+1, \ldots, n \}$ with $4\leq m \leq n-3$ so that $s_3 = n-m > 2$. 
As above, we know that 
$Z_j$ is contained in $\Delta_{S_3, S_3^c}$. By \cite[p. 549]{Keel}, we know that 
 $N^2(\Delta_{S_3, S_3^c})\to N^3(\BM_{0,n})$ is injective. Moreover, $B_{S_1, S_2, S_3}$ can be identified with 
 $D_{S_1, \{p\}, S_{2}} \times \BM_{0,n-m+1}\subset \Delta_{S_3, S_3^c}\cong \BM_{0,m+1}\times \BM_{0,n-m+1}$, 
 where $p$ denotes the node in the rational tail marked by $S_3$. By Theorem~\ref{thm:eff0n}, $D_{S_1, \{p\}, S_{2}}$ 
  is an extremal codimension two cycle in $\BM_{0, m+1}$, hence $B_{S_1, S_2, S_3}$ is an extremal codimension two cycle in $\Delta_{S_3, S_3^c}$ by Corollary~\ref{cor:prod}. It follows that $B_{S_1, S_2, S_3}$ is an extremal codimension three cycle in $\BM_{0,n}$ by Proposition~\ref{prop:subvariety}. 
\end{proof}

Finally, we point out that not all boundary strata are extremal cycles on moduli spaces of pointed stable curves. 

\begin{remark}
Recall that $\widetilde{\MM}_{0,n} = \BM_{0,n}/\mathfrak{S}_n$ is the moduli space of stable genus zero curves with $n$ unordered marked points. The one-dimensional
strata of $\widetilde{\MM}_{0,n}$ ($F$-curves) correspond to partitions of $n$ into four parts. For $n= 7$, there are three $F$-curves on 
$\widetilde{\MM}_{0,7}$: $F_{1,1,1,4}$, $F_{1,1,2,3}$ and $F_{1,2,2,2}$. However, their numerical classes satisfy that 
$2 [F_{1,1,2,3}] =  [F_{1,1,1,4}] + [F_{1,2,2,2}]$ (see e.g. \cite[Table 1]{Moon}). In particular, $F_{1,1,2,3}$ is not extremal in the 
Mori cone of curves on $\widetilde{\MM}_{0,7}$. 
\end{remark}

\section{The effective cones of $\BM_{1,n}$}
\label{sec:eff1n}

In this section, using the infinitely many extremal effective divisors in $\BM_{1,n-2}$ constructed in \cite{ChenCoskun}, we show that $\Eff^2(\BM_{1,n})$ is not finite polyhedral for $n\geq 5$. 


Let  $p_{1}, \ldots, p_n$ be the $n$ marked points and let $T = \{p_{n-2}, p_{n-1}, p_n\}$. 
The gluing morphism 
$$\HD_{0; T}=\BM_{1,n-2} \times \BM_{0,4}\to \Delta_{0; T}\subset \BM_{1,n}$$ 
 is induced by gluing a pointed genus one curve $(E, p_1, \ldots, p_{n-3}, q)$ to a pointed rational curve $(C, p_{n-2}, p_{n-1}, p_n, q)$ by identifying $q$ in  $E$ and $C$ to form a node. Denote by $\Gamma_{S}$ (resp. $\Gamma_{0})$ the image of $\Delta_{0;S}\times \BM_{0,4}$ (resp. $\Delta_{0} \times \BM_{0,4}$) in $\BM_{1,n}$ for $S\subset \{ p_1, \ldots, p_{n-3}, q \}$ and $|S| \geq 2$. Let $\Gamma$ be the image of $ \BM_{1,n-2}\times \Delta_{0; \{p_{n-1}, p_n\}}$. Note that $\Gamma_{0}$, $\Gamma_{S}$ and $\Gamma$ are the codimension two boundary strata of $\BM_{1,n}$ whose general point parameterizes a two-nodal curve that has a rational tail marked by $T$. The cases $q\in S$ and $q\not \in S$ 
correspond to the middle component having genus zero and one, respectively. 

Since $\BM_{0,4}\cong \bbP^1$, it follows that $A^1(\HD_{0; T}) \cong N^1(\HD_{0; T})$, generated by 
the classes of $\Gamma_S$, $\Gamma_{0}$ and $\Gamma$. 

\begin{lemma}
\label{lem:embed-1}
The cycle classes of $\Gamma_S$, $\Gamma_{0}$ and $\Gamma$ are independent in $N^2(\BM_{1,n})$. 
\end{lemma}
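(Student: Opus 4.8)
The plan is to work on the normalization $\HD_{0;T}=\BM_{1,n-2}\times\BM_{0,4}$ of $\Delta_{0;T}$ and to peel off the coefficients of a putative relation one stratum at a time by means of forgetful morphisms. Writing $\mathrm{pr}_1,\mathrm{pr}_2$ for the two projections of $\HD_{0;T}$, the three types of classes are pulled back from the factors: $\Gamma_S=\mathrm{pr}_1^{*}\delta_{0;S}$ and $\Gamma_0=\mathrm{pr}_1^{*}\delta_0$ come from $\BM_{1,n-2}$, while $\Gamma=\mathrm{pr}_2^{*}[\mathrm{pt}]$ is the fiber class of $\mathrm{pr}_1$. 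Since $\BM_{0,4}\cong\bbP^1$, the decomposition $N^1(\HD_{0;T})=\mathrm{pr}_1^{*}N^1(\BM_{1,n-2})\oplus\mathrm{pr}_2^{*}N^1(\BM_{0,4})$, together with the linear independence of the boundary divisor classes $\delta_{0;S},\delta_0$ of $\BM_{1,n-2}$ (standard for genus at least one), shows that these classes are already independent on $\HD_{0;T}$. The content of the lemma is therefore that their images remain independent after the boundary pushforward $(\alpha_{0;T})_{*}\colon N^1(\HD_{0;T})\to N^2(\BM_{1,n})$; this is the \emph{genus-one analogue} of the injectivity $N^1(\HD_i)\to N^2(\BM_g)$ used in the higher-genus sections, which I would establish here by hand.

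Suppose $\sum_S a_S[\Gamma_S]+a_0[\Gamma_0]+b[\Gamma]=0$ in $N^2(\BM_{1,n})$. First I would isolate $b$ using the forgetful morphism $\beta\colon\BM_{1,n}\to\BM_{1,n-2}$ that drops $p_{n-1}$ and $p_n$, identifying the target with the first factor of $\HD_{0;T}$ via $q\leftrightarrow p_{n-2}$ (since contracting the $T$-tail slides $p_{n-2}$ onto the former node). One checks that $\beta$ maps $\Gamma$ birationally onto $\BM_{1,n-2}$, whereas it collapses each $\Gamma_S$ onto the divisor $\Delta_{0;S}$ and $\Gamma_0$ onto $\Delta_0$, dropping dimension. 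Hence $\beta_{*}[\Gamma_S]=\beta_{*}[\Gamma_0]=0$ while $\beta_{*}[\Gamma]=[\BM_{1,n-2}]$, and pushing the relation forward gives $b=0$. Equivalently, this is the dimension-drop dichotomy of Proposition~\ref{prop:main-tool} applied to $\beta$.

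With $b=0$, the relation reduces to $\sum_S a_S[\Gamma_S]+a_0[\Gamma_0]=0$, and I would recover the remaining coefficients by cutting the $\BM_{0,4}$-direction before forgetting. Intersecting with $\psi_{n-1}$ and then applying $\beta_{*}$ defines a linear map $N^2(\BM_{1,n})\to N^1(\BM_{1,n-2})$. Since $p_{n-1}$ lies on the rational tail, clutching gives $\alpha_{0;T}^{*}\psi_{n-1}=\mathrm{pr}_2^{*}\psi_{n-1}$, which is the point class on $\BM_{0,4}$; thus $\psi_{n-1}\cdot[\Gamma_S]$ is represented by $\Delta_{0;S}$ times a fixed point of the tail, and fixing the cross-ratio makes $\beta$ restrict to a birational map onto $\Delta_{0;S}$. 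Consequently $\beta_{*}(\psi_{n-1}\cdot[\Gamma_S])=\delta_{0;S}$ and $\beta_{*}(\psi_{n-1}\cdot[\Gamma_0])=\delta_0$, so the relation becomes $\sum_S a_S\,\delta_{0;S}+a_0\,\delta_0=0$ in $N^1(\BM_{1,n-2})$. The independence of the boundary divisor classes there then forces every $a_S$ and $a_0$ to vanish, completing the proof.

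The step I expect to require the most care is this second pushforward: one must verify that intersecting with $\psi_{n-1}$ introduces no spurious contributions from the deeper boundary of $\Delta_{0;T}$, and that the forgetful maps restrict with degree exactly one onto the claimed boundary divisors, so that the coefficients are transported faithfully rather than merely up to an unknown (possibly vanishing) constant. This, together with invoking the linear independence of the boundary classes on $\BM_{1,n-2}$, is the crux; the remainder is the formal dimension-count bookkeeping underlying Proposition~\ref{prop:main-tool}.
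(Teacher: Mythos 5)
Your proof is correct, and its main mechanism is genuinely different from the paper's. Both arguments start the same way in spirit: you kill the coefficient of $\Gamma$ by pushing forward under a morphism that contracts $\Gamma_0$ and every $\Gamma_S$ while mapping $\Gamma$ birationally onto its image (you use the forgetful map $\beta$ dropping $p_{n-1},p_n$; the paper uses the Hassett morphism $f_{\calA}$ with weights $\frac{1}{3}$ on $T$ --- either works). The divergence is in how the remaining coefficients are recovered. The paper runs an induction on $|S|$, choosing for each symmetry class of $S$ a forgetful morphism (and powers of an ample class) intended to isolate $b_S$ alone; you instead apply the single operation $x\mapsto\beta_{*}(\psi_{n-1}\cdot x)$, which --- because $\alpha_{0;T}^{*}\psi_{n-1}=\mathrm{pr}_2^{*}\psi_{n-1}$ is the fiber-point class and $\beta\circ\alpha_{0;T}$ is just $\mathrm{pr}_1$ up to the relabeling $q\leftrightarrow p_{n-2}$ --- sends $\gamma_S\mapsto\delta_{0;S'}$ and $\gamma_0\mapsto\delta_0$ each with multiplicity exactly one, reducing the whole lemma to the standard (Arbarello--Cornalba) independence of the boundary divisor classes in $N^1(\BM_{1,n-2})$. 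The subtlety you flag is genuinely the crux but is handled by exactly the reasoning you sketch: the projection formula computes $\psi_{n-1}\cdot\gamma_S$ on the nose, not just generically, so there are no corrections from deeper boundary, and the degree-one statements follow from the uniqueness of the contracted configurations. What your route buys is robustness: it needs only one dimension count, whereas the paper's route rests on delicate index bookkeeping which, as written, is off --- for instance the paper asserts $e_{\psi}(\Gamma_{\{p_1,p_2\}})=0$ for $\psi$ forgetting $p_1,p_2$, but the attachment point of the contracted bubble is forgotten as well, so this index is $1$ and is shared by $\Gamma_{\{p_1,p_3\}}$, $\Gamma_{\{p_1,q\}}$, etc., so the stated pushforward does not by itself isolate $b_{\{p_1,p_2\}}$. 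Your $\psi_{n-1}$-intersection trick sidesteps precisely this issue; the only external input, the Picard-group fact on $\BM_{1,n-2}$, is of the same nature as facts the paper invokes elsewhere (e.g. independence of boundary classes on $\BM_{g,1}$ in Lemma~\ref{lem:i1}).
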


\begin{proof}
Denote by $\gamma_{S}$, $\gamma_{0}$ and $\gamma$ the classes of $\Gamma_S$, $\Gamma_{0}$ and $\Gamma$, respectively. 
Suppose that they satisfy a relation
\begin{equation}
\label{eq:relation-1-1}
 a \cdot \gamma_{0} + \sum_S b_S \cdot \gamma_S + c \cdot \gamma = 0 
 \end{equation}
 in $N^2(\BM_{1,n})$. Below we will show that all the coefficients in \eqref{eq:relation-1-1} are zero. 

Let $\calA$ be the weight parameter assigning $\frac{1}{3}$ to $p_{n-2}, p_{n-1}, p_n$ and $1$ to $p_k$ 
for $k \leq n-3$. Then $e_{f_{\calA}}(\Gamma) = 0 $ and $e_{f_{\calA}}(\Gamma_0) =  e_{f_{\calA}}(\Gamma_S) = 1$. Applying $f_{\calA_{*}}$ to \eqref{eq:relation-1-1}, we conclude that $c=0$. Relation \eqref{eq:relation-1-1} reduces to 
\begin{equation}
\label{eq:relation-1-2}
 a\cdot \gamma_{0} + \sum_S b_S\cdot \gamma_S = 0. 
 \end{equation}
 
Let $\psi: \BM_{1,n}\to \BM_{1,n-2}$ be the morphism forgetting $p_1$ and $p_2$. We have 
$e_{\psi}(\Gamma_{\{p_1, p_2\}}) = 0 $ and $e_{\psi}(\Gamma_0), e_{\psi}({\Gamma_S}) > 0$ for $S\neq \{p_1, p_2\}$. 
Applying $\psi_{*}$ to \eqref{eq:relation-1-2}, we conclude that $b_{\{p_1, p_2\}} = 0$, hence by symmetry 
$b_S = 0$ for $S = \{p_i, p_j \}$ for $i, j \leq n-3$. Next, let $\phi: \BM_{1,n} \to \BM_{1,n-1}$ be the morphism forgetting $p_1$. Among the remaining cycles, $e_{\phi}(\Gamma_{\{p_1, q\}}) = 0$ and $e_{\phi}(\Gamma_0) = e_{\phi}(\Gamma_{S}) > 0$ for $S\neq \{p_1, q \}$. Applying $\phi_{*}$, we conclude that $b_{\{p_1, q\}} = 0$, hence 
by symmetry $b_{\{p_i, q\}} = 0$ for all $i\leq n-3$. In sum, we obtain that $b_S = 0$ for all $|S| = 2$. 

Apply induction on $|S|$. Suppose that $b_S = 0$ for all $|S| \leq k-1$. Relation~\eqref{eq:relation-1-2} reduces to 
\begin{equation}
\label{eq:relation-1-k}
 a \cdot \gamma_{0} + \sum_{|S| \geq k} b_S \cdot \gamma_S = 0. 
 \end{equation}
Let $\varphi: \BM_{1,n}\to \BM_{1, n-k}$ be the morphism forgetting $p_1, \ldots, p_k$. Then we have 
$e_{\varphi}(\Gamma_{\{p_1,\ldots, p_k\}})  = k-2$ and $e_{\varphi}(\Gamma_0), e_{\varphi}(\Gamma_S) > k-2$ for 
$|S| \geq k$ and $S\neq \{p_1, \ldots, p_k\}$. Take an ample divisor class $A$ in $\BM_{1,n}$, 
intersect \eqref{eq:relation-1-k} with $A^{k-2}$ and apply $\varphi_{*}$. We obtain that $b_{\{p_1, \ldots, p_k\}} = 0$, hence by symmetry $b_S = 0$ for $S\subset \{p_1, \ldots, p_{n-3}\}$ and $|S| = k$. Next, let $\eta: \BM_{1,n} \to \BM_{1,n-k+1}$ be the morphism forgetting $p_1, \ldots, p_{k-1}$. Among the remaining cycles, 
$e_{\eta}(\Gamma_{\{p_1, \ldots, p_{k-1}, q\}}) = k-2$ and $e_{\eta}(\Gamma_0), e_{\eta}(\Gamma_S) \geq k-1$
for $S\neq \{p_1, \ldots, p_{k-1}, q\}$. Intersecting with $A^{k-2}$ and applying $\eta_{*}$, we obtain that 
$b_{\{p_1, \ldots, p_{k-1},q\}} = 0$, hence $b_S = 0$ for all $|S| = k$. By induction we thus conclude that 
$b_S = 0$ for all $S$. 

Finally, Relation~\eqref{eq:relation-1-k} reduces to $a \cdot \gamma_ 0 = 0$, hence $a = 0$. 

\end{proof}

\begin{theorem}
\label{thm:eff1n}
For $n \geq 5$, $\Eff^2(\BM_{1,n})$ is not finite polyhedral. 
\end{theorem}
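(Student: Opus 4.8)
The plan is to manufacture infinitely many extremal codimension two cycles by gluing the infinitely many extremal effective divisors on $\BM_{1,n-2}$ from \cite{ChenCoskun} into $\BM_{1,n}$ along the boundary divisor $\Delta_{0;T}$, where $T=\{p_{n-2},p_{n-1},p_n\}$. Recall the gluing morphism $\alpha_{0;T}\colon \HD_{0;T}=\BM_{1,n-2}\times\BM_{0,4}\to\Delta_{0;T}\subset\BM_{1,n}$. Since $n-2\geq 3$, by \cite{ChenCoskun} there are infinitely many irreducible effective divisors $D_1,D_2,\dots$ on $\BM_{1,n-2}$ whose classes span pairwise distinct extremal rays of $\Eff^1(\BM_{1,n-2})$. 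For each $m$, Corollary~\ref{cor:prod} shows that $D_m\times\BM_{0,4}$ is an extremal effective cycle of codimension one in $\HD_{0;T}$, and I set $Z_m=\alpha_{0;T}(D_m\times\BM_{0,4})$, an irreducible codimension two subvariety of $\BM_{1,n}$.

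Next I would verify the hypotheses of Proposition~\ref{prop:subvariety} with $\gamma=\alpha_{0;T}$, $Y=\HD_{0;T}$, $X=\BM_{1,n}$ and $k=n-2$. Here dimension $n-2$ in $Y$ is codimension one, so the required isomorphism $A_{n-2}(Y)\cong N_{n-2}(Y)$ is exactly the identification $A^1(\HD_{0;T})\cong N^1(\HD_{0;T})$ recorded just before Lemma~\ref{lem:embed-1}; moreover that identification tells us $\gamma_S,\gamma_0,\gamma$ form a basis of $A^1(\HD_{0;T})$, so the injectivity of $\gamma_*\colon A^1(\HD_{0;T})\to N^2(\BM_{1,n})$ is precisely the conclusion of Lemma~\ref{lem:embed-1}. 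For the required contraction I take the Hassett morphism $f_{\calA}\colon \BM_{1,n}\to\BM_{1,\calA}$ with $\calA$ assigning weight $\tfrac13$ to each of $p_{n-2},p_{n-1},p_n$ and weight $1$ to all other points. As recalled in Section~\ref{sec:prelim-moduli}, the exceptional locus of $f_{\calA}$ consists of curves carrying a rational tail of total weight at most one with at least three marked points; with these weights such a tail can only be the one marked exactly by $T$, so $\operatorname{exc}(f_\calA)\subset\Delta_{0;T}=\gamma(Y)$. Since $f_{\calA}$ collapses the $\BM_{0,4}$ factor, we get $e_{f_{\calA}}(Z_m)=1>0$ for every $m$, and Proposition~\ref{prop:subvariety} yields that each $Z_m$ is extremal in $\Eff^2(\BM_{1,n})$.

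To finish I would distinguish the rays. Writing $\pi\colon \HD_{0;T}\to\BM_{1,n-2}$ for the first projection, we have $[D_m\times\BM_{0,4}]=\pi^*[D_m]$, and $\pi^*$ is injective on $N^1$; combined with the injectivity of $\gamma_*$ furnished by Lemma~\ref{lem:embed-1}, the classes $[Z_m]=\gamma_*\pi^*[D_m]$ span pairwise distinct rays of $N^2(\BM_{1,n})$ because the $[D_m]$ do in $N^1(\BM_{1,n-2})$. Hence $\Eff^2(\BM_{1,n})$ contains infinitely many extremal rays and is not finite polyhedral. The genuinely substantive inputs have all been isolated in advance: that $A^1(\HD_{0;T})$ is spanned by $\gamma_S,\gamma_0,\gamma$ (so Lemma~\ref{lem:embed-1} really gives injectivity of the entire map $\gamma_*$, not merely of a subspace) and that $\operatorname{exc}(f_{\calA})\subset\Delta_{0;T}$. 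Granting these, the theorem is a direct assembly of Corollary~\ref{cor:prod}, Lemma~\ref{lem:embed-1}, and Proposition~\ref{prop:subvariety}, so the main obstacle is really the correct bookkeeping of the extremal divisors imported from \cite{ChenCoskun} and the confirmation that the weighted contraction $f_\calA$ isolates the boundary direction being collapsed.
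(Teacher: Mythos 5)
Your proof is correct and follows essentially the same route as the paper: pull back the infinitely many extremal divisors of \cite{ChenCoskun} to $\HD_{0;T}$ via Corollary~\ref{cor:prod}, then apply Proposition~\ref{prop:subvariety} using Lemma~\ref{lem:embed-1} and the Hassett contraction $f_{\calA}$ with weights $\tfrac13$ on $T$. Your additional verifications (that $\operatorname{exc}(f_{\calA})\subset\Delta_{0;T}$ and that the resulting rays are pairwise distinct) are details the paper leaves implicit, and they check out.
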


\begin{proof} 
Let $\calA$ be the weight parameter assigning $\frac{1}{3}$ to marked points in $T = \{ p_{n-2}, p_{n-1}, p_n\}$ and $1$ to the other marked points. The exceptional locus of  $f_{\calA}: \BM_{1,n} \to \BM_{1,\calA}$ is $\Delta_{0; T}$. By Corollary~\ref{cor:prod}, any extremal effective divisor $D'$ on $\BM_{1,n-2}$ pulls back to an extremal divisor $D$ on $\HD_{0;T}=\BM_{1,n-2}\times \BM_{0,4}$. Moreover, $e_{f_{\calA}}(D) > 0$ because the moduli of the rational tail marked by $T$ is forgotten under $f_{\calA}$. By Lemma~\ref{lem:embed-1}, we can apply Proposition~\ref{prop:subvariety} to conclude that the class of $D$ is extremal in 
$\Eff^2(\BM_{1,n})$. Now the claim follows from the fact that there are infinitely many extremal effective divisors on $\BM_{1,n-2}$ for every $n\geq 5$ (\cite{ChenCoskun}). 
\end{proof}

\section{The effective cones of $\BM_{2,n}$}
\label{sec:eff2n}

Applying the same idea as in Section~\ref{sec:eff1n}, in this section we show that $\Eff^2(\BM_{2,n})$ is not finite polyhedral for $n\geq 2$. 


Let $p_1, \ldots, p_n$ be the $n$ marked points. The gluing morphism 
$$\HD_{1; \emptyset}=\BM_{1,n+1} \times 
\BM_{1,1} \to \Delta_{1; \emptyset}\subset \BM_{2,n}$$
is induced by gluing two pointed genus one curves $(E_1, p_1, \ldots, p_{n}, q)$ and $(E_2, q)$ by identifying $q$ in both curves to form a node. Denote by $\Gamma_{S}$ (resp. $\Gamma_{0})$ the image of $\Delta_{0;S}\times \BM_{1,1}$ (resp. $\Delta_{0} \times \BM_{1,1}$) in $\BM_{2,n}$ for $S\subset \{ 1, \ldots, n+1 \}$ and $|S| \geq 2$. Let $\Gamma$ be the image of $\BM_{1,n+1}\times \Delta_0$. Note that $\Gamma_{0}$, $\Gamma_{S}$ and $\Gamma$ are the codimension two boundary strata of $\BM_{2,n}$ whose general point parameterizes a curve with two nodes and an unmarked tail of arithmetic genus one. 
The cases $q\in S$ and $q\not \in S$ 
correspond to the middle component having genus zero and one, respectively. 

Since $\BM_{1,1}\cong \bbP^1$, it follows that $A^1(\HD_{1; \emptyset}) \cong N^1(\HD_{1; \emptyset})$, generated by 
the classes of $\Gamma_S$, $\Gamma_{0}$ and $\Gamma$. 

\begin{lemma}
\label{lem:embed}
The cycle classes of $\Gamma_S$, $\Gamma_{0}$ and $\Gamma$ are independent in $N^2(\BM_{2,n})$. 
\end{lemma}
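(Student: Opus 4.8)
The plan is to mirror the proof of Lemma~\ref{lem:embed-1}, with the pseudostable contraction $\ps\colon \BM_{2,n}\to\BM_{2,n}^{\ps}$ playing the role that the weighted contraction $f_{\calA}$ played in the genus one case. So I would start from a hypothetical relation
$$ a\cdot\gamma_0 + \sum_S b_S\cdot\gamma_S + c\cdot\gamma = 0 \in N^2(\BM_{2,n}) $$
and show that every coefficient vanishes. The first step is to kill $c$. For $\Gamma$ the unmarked tail $E_2$ has already degenerated to a rational nodal curve, i.e. it sits over the single point $\Delta_0\in\BM_{1,1}$, so $\ps$ has nothing to contract in that direction: one checks $e_{\ps}(\Gamma)=0$ and that $\ps_{*}\gamma$ is a positive multiple of the locus of cuspidal curves, hence nonzero. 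For $\Gamma_0$ and every $\Gamma_S$ the elliptic tail genuinely varies in $\BM_{1,1}$, so $e_{\ps}=1$ and the pushforward vanishes. Applying $\ps_{*}$ therefore forces $c=0$.

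Once $c=0$, the remaining classes are the images under $(\iota\circ\alpha_{1;\emptyset})_{*}$ of the external products $\delta_0\times[\BM_{1,1}]$ and $\delta_{0;S}\times[\BM_{1,1}]$ on $\HD_{1;\emptyset}=\BM_{1,n+1}\times\BM_{1,1}$. Since $\BM_{1,1}\cong\PP^1$, these are independent in $N^1(\HD_{1;\emptyset})$ and simply record the boundary divisor classes $\delta_0$ and $\delta_{0;S}$ on $\BM_{1,n+1}$, which are independent. Thus the whole content of the lemma is the injectivity of the pushforward on the span of these classes, and I would establish it exactly as in Lemma~\ref{lem:embed-1}: by induction on $|S|$, using the forgetful morphisms $\BM_{2,n}\to\BM_{2,n-k}$. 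At each stage one forgets a carefully chosen subset of the marked points so that precisely one $\gamma_S$ (the current smallest one) fails to drop dimension while all other surviving strata, as well as $\gamma_0$, do drop; intersecting with an appropriate power of an ample class and pushing forward then peels off that coefficient, and symmetry disposes of all $S$ of a given size. After all $b_S$ vanish the relation reads $a\cdot\gamma_0=0$, and since $\gamma_0\neq0$ we conclude $a=0$.

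The hard part will be the separation carried out in the inductive step. Forgetting a single marked point merges every two-element set sharing that point into the one boundary divisor $\delta_{1;\emptyset}$ on $\BM_{2,n-1}$, so single deletions cannot by themselves tell $\gamma_{\{i,j\}}$ apart from $\gamma_{\{i,k\}}$; one must forget the right blocks of points and use the auxiliary ample intersections to isolate one coefficient at a time, precisely the bookkeeping of $e_{f}$-indices that occupies Lemma~\ref{lem:embed-1}. A further subtlety special to genus two is the dichotomy $q\in S$ versus $q\notin S$: because the central elliptic component now carries the node-marking $q$, the two cases give genuinely different strata (middle component of genus zero, respectively one), and they are distinguished by whether the chosen forgetful map can absorb the marking $q$, so I would treat them separately just as in the genus one argument. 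Should the forgetful scheme prove too delicate to isolate a given $b_S$ cleanly, the robust fallback is to replace that step by a test surface inside $\BM_{2,n}$ obtained by varying the $j$-invariant of $E_2$ together with one attachment (or cross-ratio) modulus of the tail marked by $S$; such a surface meets only $\Gamma_S$ among the strata in question and pairs nontrivially with $\gamma_S$, which again forces $b_S=0$. With all coefficients shown to vanish, the independence of $\gamma_S$, $\gamma_0$ and $\gamma$ in $N^2(\BM_{2,n})$ follows, as needed for Theorem~\ref{thm:eff2n}.
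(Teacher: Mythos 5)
Your proposal is correct and takes essentially the same route as the paper: the paper's proof likewise applies $\ps_*$ (using $e_{\ps}(\Gamma)=0$ while $e_{\ps}(\Gamma_0)=e_{\ps}(\Gamma_S)=1$, so the pushforwards of $\gamma_0$ and $\gamma_S$ vanish and $\ps_*\gamma\neq 0$) to conclude $c=0$, and then invokes verbatim the forgetful-morphism induction of Lemma~\ref{lem:embed-1} to obtain $a=b_S=0$. The only point to flag is your test-surface ``fallback'': a surface constructed by varying the $j$-invariant of $E_2$ and a modulus of the tail lies \emph{inside} $\Gamma_S$, so its pairing with $\gamma_S$ is an excess-intersection (normal bundle) computation rather than an automatic nonzero count, but since the main argument never needs this fallback, the proof stands as written.
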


\begin{proof}
Denote by $\gamma_{S}$, $\gamma_{0}$ and $\gamma$ the classes of $\Gamma_S$, $\Gamma_{0}$ and $\Gamma$ in $\BM_{2,n}$, respectively. 
Suppose that they satisfy 
\begin{equation}
\label{eq:relation-2}
 a \cdot \gamma_{0} + \sum_S b_S\cdot  \gamma_S + c \cdot \gamma = 0. 
 \end{equation}
Recall that $\ps: \BM_{2,n} \to \BM_{2,n}^{\ps}$ contracts $\Delta_{1; \emptyset}$, replacing an elliptic tail by a cusp. Hence we have $e_{\ps}(\Gamma) = 0$ and 
$e_{\ps}(\Gamma_0) = e_{\ps}(\Gamma_S) = 1$ for all $S$. Applying $\ps_{*}$ to \eqref{eq:relation-2}, we conclude that 
$c = 0$. Now the same induction procedure in the proof of Lemma~\ref{lem:embed-1} implies that $a = b_S  = 0$ for all $S$.
\end{proof}

\begin{theorem}
\label{thm:eff2n}
For $n \geq 2$, $\Eff^2(\BM_{2,n})$ is not finite polyhedral. 
\end{theorem}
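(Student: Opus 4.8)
The plan is to run the argument of Theorem~\ref{thm:eff1n} almost verbatim, with the Hassett contraction replaced by the first divisorial contraction $\ps:\BM_{2,n}\to \BM_{2,n}^{\ps}$ from Section~\ref{sec:prelim-moduli}, whose exceptional locus is exactly the boundary divisor $\Delta_{1;\emptyset}$ (it replaces the unmarked genus one tail by a cusp). The infinitely many extremal classes will be produced from the infinitely many extremal effective divisors on $\BM_{1,n+1}$ furnished by \cite{ChenCoskun}; these exist as soon as $n+1\geq 3$, which accounts for the hypothesis $n\geq 2$.

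Concretely, I would first take any extremal effective divisor $D'$ on $\BM_{1,n+1}$ and pull it back along the first projection to $\HD_{1;\emptyset}=\BM_{1,n+1}\times \BM_{1,1}$. By Corollary~\ref{cor:prod} the resulting divisor $D=D'\times \BM_{1,1}$ is extremal in $\Eff^1(\HD_{1;\emptyset})$. Next I would verify that $e_{\ps}(\alpha_{1;\emptyset}(D))>0$: since $\ps$ contracts the unmarked elliptic tail, that is, the $\BM_{1,1}$ factor, and $D$ dominates this factor, the image of $D$ in $\BM_{2,n}^{\ps}$ drops dimension. This is the genus two counterpart of the observation in Theorem~\ref{thm:eff1n} that the moduli of the rational $T$-tail is forgotten under $f_{\calA}$.

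I would then apply Proposition~\ref{prop:subvariety} with $\gamma=\iota\circ\alpha_{1;\emptyset}:\HD_{1;\emptyset}\to \BM_{2,n}$ and $f=\ps$. All hypotheses are already available: $A^1(\HD_{1;\emptyset})\cong N^1(\HD_{1;\emptyset})$ because $\BM_{1,1}\cong\bbP^1$, and the composite $A^1(\HD_{1;\emptyset})\to N^2(\BM_{2,n})$ is injective by Lemma~\ref{lem:embed}; moreover the exceptional locus of $\ps$ is contained in $\gamma(\HD_{1;\emptyset})=\Delta_{1;\emptyset}$. Since $D$ is extremal in $\Eff^1(\HD_{1;\emptyset})$ and $e_{\ps}(\gamma(D))>0$, Proposition~\ref{prop:subvariety} gives that $\gamma(D)$ is an extremal codimension two class on $\BM_{2,n}$. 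Letting $D'$ range over the infinitely many extremal effective divisors of $\BM_{1,n+1}$ produces infinitely many such classes; non-proportional choices of $D'$ remain non-proportional after pullback to $\HD_{1;\emptyset}$ and, by the injectivity of $\gamma_{*}$ guaranteed by Lemma~\ref{lem:embed}, also after pushforward to $N^2(\BM_{2,n})$. Hence $\Eff^2(\BM_{2,n})$ has infinitely many extremal rays and cannot be finite polyhedral.

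I do not anticipate a genuine obstacle, since the proof is strictly parallel to the genus one case and its essential input, the independence statement of Lemma~\ref{lem:embed}, has already been established. The only point meriting care is that the extremal divisors $D'$ on $\BM_{1,n+1}$ lie, after pullback, in the span of the boundary classes $\Gamma_S$ and $\Gamma_0$ so that Lemma~\ref{lem:embed} applies to them; this holds because on the genus one space $\BM_{1,n+1}$ every divisor class is a combination of boundary divisors, which is precisely why $N^1(\HD_{1;\emptyset})$ is generated by the classes $\Gamma_S$, $\Gamma_0$ and $\Gamma$.
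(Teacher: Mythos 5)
Your proposal is correct and follows essentially the same route as the paper's own proof: pull back an extremal divisor from $\BM_{1,n+1}$ to $\HD_{1;\emptyset}=\BM_{1,n+1}\times\BM_{1,1}$ via Corollary~\ref{cor:prod}, note that $\ps$ contracts the elliptic-tail factor so $e_{\ps}>0$, and apply Proposition~\ref{prop:subvariety} using Lemma~\ref{lem:embed} for the injectivity hypothesis, with \cite{ChenCoskun} supplying infinitely many extremal divisors on $\BM_{1,n+1}$ for $n\geq 2$. Your added remarks (tracking non-proportionality under $\gamma_{*}$ and why $N^1(\HD_{1;\emptyset})$ is boundary-generated) are correct elaborations of points the paper leaves implicit.
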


\begin{proof}
The exceptional locus of $\ps: \BM_{2,n} \to \BM_{2,n}^{\ps}$ is 
$\Delta_{1; \emptyset}$. By Corollary~\ref{cor:prod}, any extremal effective divisor $D'$ on $\BM_{1,n+1}$ pulls back to an extremal divisor $D$ on $\HD_{1;\emptyset}=\BM_{1,n+1}\times \BM_{1,1}$. Moreover, $e_{\ps}(D) > 0$ because the unmarked genus one tail is forgotten under $\ps$. By Lemma~\ref{lem:embed}, we can apply Proposition~\ref{prop:subvariety} to conclude that the class of $D$ is extremal in 
$\Eff^2(\BM_{2,n})$. Now the claim follows from the fact that there are infinitely many extremal effective divisors on $\BM_{1,n+1}$ for every $n\geq 2$ (\cite{ChenCoskun}). 
\end{proof}

\begin{remark}
Since $N^1(\HD_1)\cong A^1(\HD_1)  \to N^2(\BM_g)$ is injective and $\HD_1 \cong \BM_{g-1,1}\times \BM_{1,1}$, the same proof as that of Theorem~\ref{thm:eff2n} implies that the pullback of any extremal effective divisor from $\BM_{g-1,1}$ to $\HD_1$ gives rise to an extremal codimension two cycle in $\BM_g$. 
For instance, the divisor $W$ of Weierstrass points is known to be extremal in $\BM_{g-1,1}$ for $3\leq g\leq 6$ 
(see e.g. \cite[Theorem 4.3 and Remark 4.4]{ChenCycle}). Hence $W$ yields an extremal codimension two cycle in $\Eff^2(\BM_g)$ for $3\leq g\leq 6$.  
\end{remark}


\begin{thebibliography}{99}

\bibitem[AFSV]{AFSV}
J.~Alper, M.~Fedorchuk, D.~Smyth, and F.~van der Wyck, Log minimal model program for $\overline{M}_g$: the second flip, arXiv:1308.1148. 


\bibitem[BHPV]{BHPV}
W.~Barth, K~Hulek, C.~Peters, and A.~Van de Ven, {\em Compact complex surfaces}, Springer-Verlag, Berlin, 2004. 

\bibitem[CT]{CastravetTevelev}
	A.-M.~Castravet,  and J.~Tevelev, Hypertrees, projections and moduli of stable rational curves,
	{\em J. Reine Angew. Math.}
	{\bf 675} (2013), 121--180.

\bibitem[C]{ChenCycle}
D.~Chen, Strata of abelian differentials and the Teichm\"uller dynamics, 
{\em J. Mod. Dyn.} {\bf 7} (2013), no. 1, 135--152.
	
\bibitem[CC1]{ChenCoskunKont}
D.~Chen, and I.~Coskun, with an appendix by Charley Crissman, Towards Mori's program for the moduli space of stable maps, 
{\em Amer. J. Math.} {\bf 133} (2011), no. 5, 1389--1419. 

\bibitem[CC2]{ChenCoskun}
D.~Chen, and I.~Coskun, Extremal effective divisors on $\BM_{1,n}$, 
{\em Math. Ann.}, to appear. 

\bibitem[DELV]{DELV}
O.~Debarre, L.~Ein, R.~Lazarsfeld, and C.~Voisin, Pseudoeffective and nef classes on abelian varieties, 
 {\em Compos. Math.} {\bf 147} (2011), no. 6, 1793--1818. 
 
 \bibitem[E]{EdidinChow}
 D.~Edidin, The codimension-two homology of the moduli space of stable curves is algebraic, 
 {\em Duke Math. J.} {\bf 67} (1992), no. 2, 241--272. 

\bibitem[EH]{EisenbudHarrisKodaira}
	D.~Eisenbud, and J.~Harris, 
	The Kodaira dimension of the moduli space of curves of genus $\geq 23$,
	{\em Invent. Math.} {\bf 90} (1987), no. 2 , 359--387.
	
\bibitem[Fab1]{FaberChow}
C.~Faber, Chow rings of moduli spaces of curves. I. The Chow ring of $\BM_3$, 
{\em Ann. of Math. (2)} {\bf 132} (1990), no. 2, 331--419. 

\bibitem[Fab2]{FaberChow4}
C.~Faber, Chow rings of moduli spaces of curves. II. Some results on the Chow ring of $\BM_4$, 
{\em Ann. of Math. (2)} {\bf 132} (1990), no. 3, 421--449. 

\bibitem[Fab3]{FaberCodim}
C.~Faber, Some results on the codimension-two Chow group of the moduli space of stable curves. 
{\em Algebraic curves and projective geometry (Trento, 1988)}, 66--75, 
Lecture Notes in Math., {\bf 1389}, Springer, Berlin, 1989. 

\bibitem[FP]{FaberPagani}
C.~Faber, and N.~Pagani, The class of the bielliptic locus in genus $3$,  
 arXiv:1206.4301. 
	
\bibitem[Far]{FarkasKoszul}
  G.~Farkas, Birational aspects of the geometry of $\BM_g$,
  {\em Surv. Differ. Geom.} {\bf 14} (2009), 57--110. 
  
\bibitem[Fu]{FultonIntersection}
W.~Fulton, {\em Intersection theory}, 
Springer-Verlag, Berlin, 1998. 
  
\bibitem[GKM]{GibneyKeelMorrison}
A.~Gibney, S.~Keel, and I.~Morrison, Towards the ample cone of $\BM_{g,n}$, 
{\em J. Amer. Math. Soc.} {\bf 15} (2002), no. 2, 273--294. 
  
\bibitem[Harr]{HarrisKodaira}
	J.~Harris, 
	On the {K}odaira dimension of the moduli space of curves {II}. The even-genus case, 
	{\em Invent. Math.} {\bf 75} (1984), no. 3, 437--466.
	 


\bibitem[HMo]{HarrisMorrison}
   J.~Harris, and I.~Morrison, {\em The moduli of curves}, 
   Graduate Texts in Mathematics 187, Springer, New York, 1998.

\bibitem[HMu]{HarrisMumfordKodaira} 
	J.~Harris, and D.~Mumford, 
	On the Kodaira dimension of the moduli space of curves,
	{\em Invent. Math.} {\bf 67} (1982), no. 1, 23--88.
	 
\bibitem[Hart]{Hartshorne}
   R.~Hartshorne, {\em Algebraic geometry}, 
   Graduate Texts in Mathematics, 52, Springer-Verlag, New York-Heidelberg, 1977.	 


\bibitem[Hass]{HassettWeight}
B.~Hassett, Moduli spaces of weighted pointed stable curves, 
{\em Adv. Math.} {\bf 173} (2003), no. 2, 316--352. 
	 
\bibitem[HH1]{HassettHyeonContraction}
B.~Hassett, and D.~Hyeon, Log canonical models for the moduli space of curves: the first divisorial contraction, 
	  {\em Trans. Amer. Math. Soc.} {\bf 361} (2009), no. 8, 4471--4489. 
	  
\bibitem[HH2]{HassettHyeonFirst}
B.~Hassett, and D.~Hyeon, Log minimal model program for the moduli space of stable curves: the first flip, 
	   {\em Ann. of Math. (2)} {\bf 177} (2013), no. 3, 911--968. 

\bibitem[Ke]{Keel}
S.~Keel, Intersection theory of moduli space of stable $n$-pointed curves of genus zero, 
{\em Trans. Amer. Math. Soc.} 330 (1992), no. 2, 545--574. 

\bibitem[Ko]{Kollar}
   J.~Koll\'ar, {\em Rational curves on algebraic varieties}, 
   Springer-Verlag, Berlin, 1996.

\bibitem[La]{LazarsfeldPositivity}
R.~Lazarsfeld, {\em Positivity in algebraic geometry. I.} Classical setting: line bundles and linear series, A Series of Modern Surveys in Mathematics, 48, Springer-Verlag, Berlin, 2004. 

\bibitem[Lo]{LoganKodaira}
	A.~Logan, 
	The Kodaira dimension of moduli spaces of curves with marked points,
	 {\em Amer. J. Math.} {\bf 125} (2003), no. 1, 105--138.

\bibitem[M]{Moon}
         H.~Moon, 	 
         Mori's program for $\overline{M}_{0,7}$ with symmetric divisors, 
         arXiv:1403.7225. 
	 
\bibitem[O]{OpieExtremal}
         M.~Opie, 	 
         Extremal divisors on moduli spaces of rational curves with marked points, 
         arXiv:1309.7229. 

\bibitem[R1]{RullaThesis}
         W.~Rulla, The birational geometry of moduli space $M(3)$ and moduli space $M(2,1)$, 
          Thesis (Ph.D.)--The University of Texas at Austin. 2001. 
          
\bibitem[R2]{RullaM0n}
W.~Rulla, Effective cones of quotients of moduli spaces of stable $n$-pointed curves of genus zero,
{\em Trans. Amer. Math. Soc.} {\bf 358} (2006), no. 7, 3219--3237. 
	 
\bibitem[V]{Vermeire} 
	P.~Vermeire,  A counterexample to Fulton's conjecture on $\overline{M}_{0,n}$,
	{\em J. Algebra} {\bf 248}  (2002), no. 2, 780--784.

	 
\end{thebibliography}
\end{document}